\documentclass[10pt,a4paper,leqno]{amsart}
\usepackage{amssymb,xspace}
\usepackage{amstext}
\usepackage[mathscr]{eucal}
\theoremstyle{plain}
\usepackage{amsbsy,amssymb,amsfonts,latexsym,eucal,amscd}
\usepackage[dvips]{graphicx}
\usepackage{epsfig}
\usepackage[all]{xy}
\usepackage{pstricks}
\usepackage{mathrsfs}
\marginparwidth=10 true mm
\oddsidemargin=2.5 true mm
\evensidemargin=2.5 true mm
\marginparsep=5 true mm
\topmargin=0 true mm
\headheight=10 true mm
\headsep=10 true mm
\topskip=0 true mm
\footskip=15 true mm

\setlength{\textwidth}{160 true mm}
\setlength{\textheight}{231 true mm}
\setlength{\hoffset}{-3 true mm}
\setlength{\voffset}{-12 true mm}

\parindent=0 true mm

\overfullrule=0 pt
\raggedbottom





\newcommand{\tens}[1][]{\mathbin{\otimes_{\raise1.5ex\hbox to-.1em{}{#1}}}}
\newcommand{\lltens}[1][]{{\mathop{\tens}\limits^{\rm \mathbb{L}}}_{#1}}
\newcommand{\lltensl}[1][]{{\mathop{\tens}\limits^{\rm \mathbb{L}, \, \ell}}_{#1}}
\newcommand{\lltensr}[1][]{{\mathop{\tens}\limits^{\rm \mathbb{L},\, r}}_{#1}}


\newcommand{\ds }{\ensuremath{\displaystyle}}


\newcommand{\st }{\ensuremath{\scriptstyle}}



\newcommand{\C }{\ensuremath{\mathbb C}}

\newcommand{\Z }{\ensuremath{\mathbb Z}}
\newcommand{\N }{\ensuremath{\mathbb N}}




\DeclareMathOperator{\hoo}{\mathcal{H}\mathnormal{om}}

\DeclareMathOperator{\pr}{pr}



\newcommand{\aaa }{\ensuremath{\mathcal{A}}}
\newcommand{\bb }{\ensuremath{\mathcal{B}}}

\newcommand{\oo }{\ensuremath{\mathcal{O}}}
\newcommand{\hh }{\ensuremath{\mathcal{H}}}
\newcommand{\ff }{\ensuremath{\mathcal{F}}}
\newcommand{\kk }{\ensuremath{\mathcal{K}}}

\newcommand{\g }{\ensuremath{\mathcal{G}}}

\newcommand{\jj }{\ensuremath{\mathcal{J}}}

\newcommand{\nn }{\ensuremath{\mathcal{N}}}
\newcommand{\eee }{\ensuremath{\mathcal{E}}}

\newcommand{\LL }{\ensuremath{\mathcal{L}}}

\newcommand{\qq}{\ensuremath{\mathcal{Q}}}
\newcommand{\pp}{\ensuremath{\mathcal{P}}}



\newtheorem{theorem}{Theorem}[section]

\newtheorem{lemma}[theorem]{Lemma}

\newtheorem{proposition}[theorem]{Proposition}

\newtheorem{corollary}[theorem]{Corollary}

{\theoremstyle{definition}}

{\theoremstyle{definition}}

{\theoremstyle{definition}}

{\theoremstyle{definition}}

{\theoremstyle{definition}\newtheorem{definition}[theorem]{Definition}}

{\theoremstyle{definition}}

{\theoremstyle{definition}\newtheorem{remark}[theorem]{Remark}}

\newtheorem{conjecture}[theorem]{Conjecture}




\address{}
\email{}






\newcommand{\we}{\ensuremath{\wedge}}

\newcommand{\oti}{\ensuremath{\otimes}}

\newcommand{\he}{^{\vphantom{*}} }
\newcommand{\be}{_{\vphantom{i}} }





\newcommand{\ba}[1]{\ensuremath{\overline{#1}}}



\newcommand{\ti }[1]{\ensuremath{\widetilde{#1}}}

\newcommand{\rb }{\ensuremath{\raisebox}}

\newcommand{\tim }{\ensuremath{\times}}



\newcommand{\ee }{\ensuremath{^{\, *}}}

\newcommand{\ei }{\ensuremath{_{*}}}


















\newcommand{\bop }{\ensuremath{\bigoplus\limits}}

\newcommand{\suq }{\ensuremath{\subseteq}}






\newcommand{\pe }{\ensuremath{^{\, !}}}


\entrymodifiers={+!!<0pt,\fontdimen22\textfont2>}


\def\apl#1#2#3{#1\mkern -1 mu:\mkern - 6 mu
\xymatrix@C=17pt{#2\!\ar[r]&\!#3}
}

\def\aplexp#1#2#3#4{#1\mkern -1 mu:\mkern - 6 mu
\xymatrix@C=17pt{#2\!\ar[r]^-{#4}&\!#3}
}

\def\aplcourte#1#2#3{#1\mkern -4 mu:\mkern - 8 mu
\xymatrix@C=12pt{#2\!\ar[r]&\!#3}
}

\def\aplpt#1#2#3#4{#1\mkern -4 mu:\mkern - 8 mu
\xymatrix@C=17pt{#2\!\ar[r]&\!#3#4}
}



\def\sutrgd#1#2#3{
\xymatrix@C=17pt{
0\ar[r]&#1\ar[r]&#2\ar[r]&#3\ar[r]&0
}
}

\def\sutrgdpt#1#2#3#4{
\xymatrix@C=17pt{
0\ar[r]&#1\ar[r]&#2\ar[r]&#3\ar[r]&0#4
}
}

\def\sutrgpt#1#2#3#4{
\xymatrix@C=17pt{
0\ar[r]&#1\ar[r]&#2\ar[r]&#3#4
}
}


\def\sutr#1#2#3{
\xymatrix@C=17pt{
#1\ar[r]&#2\ar[r]&#3
}
}


\def\sutrg#1#2#3{
\xymatrix@C=17pt{
0\ar[r]&#1\ar[r]&#2\ar[r]&#3
}
}


\def\sutrd#1#2#3{
\xymatrix@C=17pt{
#1\ar[r]&#2\ar[r]&#3\ar[r]&0
}
}



\def\fl{\xymatrix@C=15pt{
\ar[r]&
}}

\def\flexp#1{\smash{\xymatrix@C=15pt{
\ar[r]^-{#1}&
}}}

\def\flba{\xymatrix@C=17pt{
\ar@{|->}[r]&
}}


\def\flcourte{\xymatrix@C=10pt{
\ar[r]&
}}


\def\flgd#1#2{\xymatrix@C=17pt{#1\!
\ar[r]&\!#2
}}

\def\flgdexp#1#2#3{\xymatrix@C=17pt{#1\!
\ar[r]^-{#3}&\!#2
}}

\def\flcourtegd#1#2{\xymatrix@C=15pt{\!\!#1\!
\ar[r]&\!#2\!\!
}}


\def\flgdin#1#2{\xymatrix@C=3ex{\!\!\scriptstyle{#1}
\ar[r]&\scriptstyle{#2}\!\!\!
}}

\def\fldouble{\xymatrix@1{
\ar@{->>}[r]&
}}

\def\fle#1#2{
\xymatrix@1{
#1
\ar[r]&#2
}}

\def\flex#1#2#3{
{\xymatrix@1{
#1
\ar[r]^{#3}&#2
}}
}

\def\fledouble#1#2{
{\xymatrix@1{
#1
\ar@{->>}[r]&{#2}
}}
}

\def\flexdouble#1#2#3{
{\xymatrix@1{
#1
\ar@{->>}[r]^{#3}&{#2}
}}
}

\def\diagca#1#2#3#4#5#6#7#8{\xymatrix@1{
#1
\ar[d]_{#6}\ar[r]_{#5}&#2\ar[d]_{#7}\\
#3
\ar[r]_{#8}&#4
}}

\def\sutrois#1#2#3{
{\xymatrix@1{
#1
\ar[r]&#2
\ar[r]&#3
}}
}

\def\sutroiszerogdprime#1#2#3{
{\xymatrix@1{
0
\ar@<-0.5mm>[r]&#1
\ar@<-0.5mm>[r]&#2
\ar@<-0.5mm>[r]&#3
\ar@<-0.5mm>[r]&0
}}
}

\def\fleprime#1#2{
\xymatrix@1{
#1
\ar[r]&#2
}}

\def\sutroisnom#1#2#3#4#5{
{\xymatrix@1{
#1
\ar[r]^{#4}&#2
\ar[r]^{#5}&#3
}}
}

\def\sutroiszerogd#1#2#3{
{\xymatrix@1{
0
\ar[r]&#1
\ar[r]&#2
\ar[r]&#3
\ar[r]&0
}}
}
\def\strgdexp#1#2#3#4#5#6{
{\xymatrix@1{
0
\ar[r]&\rb{#2ex}{$#1$}
\ar[r]&\rb{#4ex}{$#3$}
\ar[r]&\rb{#6ex}{$#5$}
\ar[r]&0
}}
}

\def\sutroiszerog#1#2#3{
{\xymatrix@1{
0
\ar[r]&#1
\ar[r]&#2
\ar[r]&#3
}}
}

\def\suxtroiszerogd#1#2#3#4#5{
{\xymatrix@1{
0
\ar[r]&#1
\ar[r]^{#4}&#2
\ar[r]^{#5}&#3
\ar[r]&0
}}
}

\def\suquatre#1#2#3#4{
{\xymatrix@1{
#1
\ar[r]&#2
\ar[r]&#3
\ar[r]&#4
}}
}

\def\suxquatre#1#2#3#4#5#6#7{
{\xymatrix@1{
#1
\ar[r]^{#5}&#2
\ar[r]^{#6}&#3
\ar[r]^{#7}&#4
}}
}

\def\sucinq#1#2#3#4#5{
{\xymatrix@1{
#1
\ar[r]&#2
\ar[r]&#3
\ar[r]&#4
\ar[r]&#5
}}
}

\def\suxcinq#1#2#3#4#5#6#7#8#9{
{\xymatrix@1{
#1
\ar[r]^{#6}&#2
\ar[r]^{#7}&#3
\ar[r]^{#8}&#4
\ar[r]^{#9}&#5
}}
}


\DeclareMathOperator{\id}{id}

\DeclareMathOperator{\im}{im}

\newcommand{\Dg }{\ensuremath{\mathfrak{D}}}

\newcommand{\bw }[2]{\ensuremath{\smash[t]{\Lambda^{#1}\be#2}}}
\newcommand{\bwi }[3]{\ensuremath{\smash{\Lambda^{{#1}\vphantom{\big)}}_{{#2}^{\he}}{#3}}}}
\newcommand{\Ee }{\ensuremath{E\ee\be}}

\newcommand{\wh }[1]{\ensuremath{\widehat{#1}}}
\newcommand{\ub }[1]{\ensuremath{\underline{#1}}}
\newcommand{\ubi }{\ensuremath{\,\underline{i\be}\,}}
\newcommand{\ubj }{\ensuremath{\,\underline{j\be}\,}}
\newcommand{\ubl }{\ensuremath{\underline{\lambda\be}}}
\newcommand{\ubm }{\ensuremath{\underline{\mu\be}}}
\newcommand{\uba }{\ensuremath{\underline{\alpha \be}}}

\newcommand{\thr}{\ensuremath{\theta _{I}\he[r]}}
\newcommand{\rh}[1]{\mathcal{RH}om_{#1}\he}
\newcommand{\rhl}[1]{\mathcal{RH}om^{\,\ell\vphantom{p}}_{#1}}
\newcommand{\rhr}[1]{\mathcal{RH}om^{\,r\vphantom{p}}_{#1}}
\newcommand{\om}[1]{\ensuremath{\Omega_{A/k}^{\,#1}}}
\newcommand{\nbh}{neighbourhood}
\newcommand{\bax}{\ensuremath{\ba{\hphantom{\textrm{X}}\vphantom{\textrm{X}\he}}}
\hspace*{-2.13ex}X}
\newcommand{\eex}{\,\ensuremath{\ba{\hphantom{\textrm{X}}\vphantom{\textrm{X}\he}}}
\hspace*{-1.8ex}\eee}

\newcommand{\baxx}{\ensuremath{{}_{\ba{\hphantom{\textrm{X}}\vphantom{
{\textrm{\Large a}}}}\hspace*{-1.6ex}X}}}

\newcommand{\oox}{\ensuremath{\oo_{X}\he}}
\newcommand{\ooy}{\ensuremath{\oo_{Y}\he}}

\newcommand{\bigo}[2]{\smash{\bigotimes_{{#1}^{\he}}^{{#2}\vphantom{\big)}}}}



\author{Julien Grivaux}

\address{CNRS, LATP\\
UMR 6632\\
CMI, Universit\'{e} de Provence\\
39, rue Fr\'{e}d\'{e}ric Joliot-Curie\\
13453 Marseille Cedex 13\\
France.}

\email{jgrivaux@cmi.univ-mrs.fr}
\title{The Hochschild-Kostant-Rosenberg isomorphism for quantized analytic cycles}

\begin{document}

\begin{abstract}
In this article, we provide a detailed account of a construction sketched by Kashiwara in an unpublished manuscript concerning generalized HKR isomorphisms for smooth analytic cycles whose conormal exact sequence splits. It enables us, among other applications, to solve a problem raised recently by Arinkin and C\u{a}ld\u{a}raru about uniqueness of such HKR isomorphisms in the case of the diagonal injection. Using this construction, we also associate with any smooth analytic cycle endowed with an infinitesimal retraction a cycle class which is an obstruction for the cycle to be the vanishing locus of a transverse section of a holomorphic vector bundle.
\end{abstract}
\vspace*{1.cm}
\maketitle
\section{Introduction}
The existence of the Hochschild-Kostant-Rosenberg (HKR) isomorphism is a fundamental result both in algebraic geometry and in homological algebra. Let us recall the statement:
\begin{theorem}[\cite{HKR}]
Let $A$ be a finitely generated regular commutative algebra over a field $k$ of characteristic zero. Then for any nonnegative integer $i$, the Hochschild homology group $\emph{HH}_i \he (A)$ is isomorphic to the module $\smash{\Omega^{\,i}_{A/k}} $ of K\"{a}hler differentials of degree $i$ of $A$.
\end{theorem}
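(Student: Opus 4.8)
The plan is to follow the classical route, isolating the single place where regularity genuinely enters. First I would recast the statement in computable form: by definition $\mathrm{HH}_i(A)=\mathrm{Tor}_i^{A^e}(A,A)$, where $A^e=A\otimes_k A$ and $A$ is an $A^e$-module via the multiplication map $\mu\colon A^e\to A$. Writing $I=\ker\mu$, recall the canonical identification $\Omega^1_{A/k}\cong I/I^2$, whence $\Omega^i_{A/k}\cong\bigwedge^i_A(I/I^2)$. There is a natural antisymmetrization (shuffle) map $\varepsilon\colon\Omega^i_{A/k}\to\mathrm{HH}_i(A)$ sending $a_1\wedge\cdots\wedge a_i$ to the class of $\sum_{\sigma}\mathrm{sgn}(\sigma)\,1\otimes a_{\sigma(1)}\otimes\cdots\otimes a_{\sigma(i)}$ in the bar complex; since $\mathrm{char}\,k=0$ one may also divide by $i!$. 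One checks once and for all that $\varepsilon$ is a well-defined morphism of graded-commutative algebras, independent of the choices made. The whole content of the theorem is that $\varepsilon$ is an isomorphism when $A$ is regular.

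Next I would settle the model case $A=k[x_1,\dots,x_n]$. Here $A^e=k[x_1,\dots,x_n,y_1,\dots,y_n]$, and the elements $z_j=x_j\otimes 1-1\otimes x_j$ form a regular sequence in $A^e$ generating $I$. Hence the Koszul complex $K_\bullet(z_1,\dots,z_n;A^e)$ is a free resolution of $A$ over $A^e$. Applying $A\otimes_{A^e}(-)$ annihilates every differential, since each $z_j$ is sent to $0$ in $A$; the resulting complex has zero differentials, so $\mathrm{HH}_i(A)\cong\bigwedge^i A^n$. Tracing the identification shows it is induced by $\varepsilon$, and it matches $\bigwedge^i A^n\cong\Omega^i_{A/k}$. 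Thus the theorem holds for polynomial rings, and, by the same Koszul argument applied to the localized regular sequence, for any localization of a polynomial ring.

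Then comes the reduction of general regular $A$ to this model. Hochschild homology commutes with localization, $\mathrm{HH}_i(S^{-1}A)\cong S^{-1}\mathrm{HH}_i(A)$, compatibly with $\varepsilon$, and likewise for Kähler differentials; since being an isomorphism is local on $\mathrm{Spec}\,A$, it suffices to prove it after localizing at each maximal ideal. As $k$ has characteristic zero it is perfect, so regularity of $A$ is equivalent to smoothness over $k$; in particular $\Omega^1_{A/k}$ is finitely generated projective of rank $n=\dim A$, and étale-locally $A$ is obtained from $k[x_1,\dots,x_n]$ by an étale base change. The geometric input I would extract from smoothness is that the diagonal ideal $I\subset A^e$ is, locally at every point of the diagonal, generated by a regular sequence of length $n$ — that is, $A$ is a local complete intersection inside $A^e$. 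Granting this, the Koszul complex on such a local regular sequence resolves $A$ over the relevant localization of $A^e$, its differentials again die after $A\otimes_{A^e}(-)$, and one obtains $\mathrm{HH}_i(A)\cong\bigwedge^i_A(I/I^2)\cong\Omega^i_{A/k}$ locally, realized by $\varepsilon$; patching over $\mathrm{Spec}\,A$ concludes.

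The main obstacle is precisely this last, non-formal step: showing that smoothness forces the diagonal to be a local complete intersection in $A^e$. I would handle it by base change along an étale morphism: étale-locally the map $A^e\to A$ is the pullback of the corresponding map for $k[x_1,\dots,x_n]$, where the explicit regular sequence $z_j=x_j\otimes 1-1\otimes x_j$ has already been exhibited; flatness of étale maps together with the stability of the regular-sequence property (equivalently, of Koszul resolutions) under flat base change transports the complete-intersection structure to $A$. Alternatively, one argues directly: $A$ smooth over $k$ makes $A^e$ smooth over $A$, hence regular, so at a prime $\mathfrak{q}$ of $A^e$ on the diagonal the local ring $(A^e)_{\mathfrak{q}}$ is regular; $I_{\mathfrak{q}}$ is generated by $n$ elements by Nakayama (as $I/I^2$ is locally free of rank $n$), and $I_{\mathfrak{q}}$ has height $n$ because $A^e/I\cong A$ drops dimension by $n$; in a regular local ring, $n$ elements generating an ideal of height $n$ automatically form a regular sequence, which is exactly what is needed.
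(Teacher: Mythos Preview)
The paper does not prove this theorem; it merely cites it from \cite{HKR} as classical background in the introduction, so there is no ``paper's own proof'' to compare against. Your argument is the standard one and is essentially correct: identify $\mathrm{HH}_i(A)$ with $\mathrm{Tor}_i^{A^e}(A,A)$, exhibit the antisymmetrization map $\varepsilon$, verify the polynomial case via the Koszul resolution on the diagonal regular sequence, and reduce the general case by localization plus the fact that smoothness makes the diagonal a local complete intersection in $A^e$.

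Two small points worth tightening. First, the assertion that Hochschild homology commutes with localization is not completely formal: $(S^{-1}A)\otimes_k(S^{-1}A)$ is not a localization of $A\otimes_k A$, so one cannot simply invoke flatness of localization on the Tor groups. The clean way is to use the bar complex (which is functorial in $A$) and observe that for $A$ essentially of finite type over $k$ the natural map $S^{-1}\mathrm{HH}_i(A)\to\mathrm{HH}_i(S^{-1}A)$ is an isomorphism; alternatively, appeal to Weibel--Geller or simply note that your local Koszul argument already computes both sides directly, making the localization compatibility a consequence rather than an input. Second, in your direct argument that $I_{\mathfrak q}$ has height $n$ you need $A^e$ to be equidimensional (or at least catenary) to pass from a drop in Krull dimension to height; this is fine here since $A^e$ is smooth over $k$, hence regular, hence Cohen--Macaulay, but it is worth saying.
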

The HKR isomorphism has been generalized in the context of algebraic geometry in \cite{SW} and \cite{YE}: for any smooth quasi-projective variety over a field of characteristic zero, the derived tensor product ${\oox \lltens{}_{\oo_{X \times X}}\he \oox}$ is isomorphic in the derived category of sheaves of $\oox$-modules to the direct sum of its cohomology objects, which is $\bigoplus_{i} \, \Omega_X^i[i]$.
The same result also holds for smooth (or even singular) complex manifolds as shown in \cite{BF} and \cite{SH}, but the proof is much more involved.
\par \bigskip
We are interested here in a generalization of the analytic HKR isomorphism consisting in replacing the diagonal injection by an arbitrary closed embedding. If $(X, Y)$ is a pair of complex manifolds such that $X$ is a closed complex submanifold of $Y$\!, the derived tensor product ${\oox \lltens{}_{\ooy} \he \oox}$ is not isomorphic in general to the direct sum of its cohomology objects. This fact is the main issue of \cite{AC}, where it is proved that ${\oox \lltens{}_{\ooy} \he \oox} $ is isomorphic to $\smash[b]{\bigoplus_{i} \,\bw{i}{} N\ee_{X/Y}[i]} $ if and only if the normal bundle of $X$ in $Y$ extends to a locally-free sheaf on the first formal neighbourhood $\bax$ of $X$ in $Y$\!. More precisely, if $\mathcal{N}$ is such an extension, the authors construct a specific generalized HKR isomorphism, generally depending on $\mathcal{N}$, between $\oox \lltens{}_{\ooy} \he \oox$ and $\bigoplus_{i}\,\bw{i}{} N\ee_{X/Y}[i]$. Therefore, it appears clearly that it is necessary to quantize an analytic cycle (i.e.\ to add some additional geometric data) in order to associate with this cycle a well-defined HKR isomorphism. Quantizing the normal bundle allows to define HKR isomorphisms for the most general cycles (while dealing with smooth cycles), but the counterpart of this generality is that the space of the possible quantizations of a cycle cannot be easily handled. For instance, the following problem is raised in \cite{AC}: in the case of the diagonal injection, are the HKR isomorphisms associated with the quantizations given by the two canonical projections the same? More generally, the comparison of HKR isomorphisms associated with different quantizations of an analytic cycle is still an open problem.
\par \bigskip
In this article, our aim is to present a different construction of HKR isomorphisms associated with pairs $(X, Y)$ of complex manifolds satisfying a more restrictive condition than the aforementioned one: the normal (or conormal) exact sequence associated with the cycle $X$ has to be holomorphically split, which means in an equivalent way that the injection of $X$ into $\bax$ admits a holomorphic retraction. For any such retraction $\sigma$, the locally-free sheaf $\smash{\sigma\ee N_{X/Y}\he} $ is a quantization of $\smash{N_{X/Y}\he} $ as defined above. As far as the diagonal injection is concerned, this process is carried out in \cite{KA} (which is reproduced in \cite[chap.\ 5]{KS1}); the general case is sketched in \cite{KA}. Considering its importance, we provide a detailed account of the construction.
\par \bigskip
In this setting, analytic cycles are quantized by retractions of their first formal injection (that is the injection into their first formal neighbourhood) so that the set of possible quantizations of an analytic cycle is an affine space whose underlying vector space is $\smash{\textrm{Hom}(\, \Omega_X^1, N\ee_{X/Y})} $. With such a quantization $\sigma$ is associated a complex $\mathcal{P}_{\sigma} \he$ of coherent sheaves on $\bax$ which is quasi-isomorphic to $\oox$ and reduces to the first part of the Atiyah exact sequence when $X$ is a divisor in $Y$ (this is why we call $\mathcal{P}_{\sigma} \he$ the Atiyah-Kashiwara complex associated with $\sigma$). The sheaves defining $\mathcal{P}_{\sigma} \he$ are torsion sheaves, so that they are definitely not flat over $\ooy$. However, a remarkable fact is that $\mathcal{P}_{\sigma} \he$ can be used to compute the derived tensor product $\smash[b]{\oox \lltens{}_{\ooy} \he \oox}$ and therefore to get a specific HKR isomorphism $\Gamma_{\sigma} \he$ between $\smash[b]{\oox \lltens{}_{\ooy} \he \oox}$ and $\bigoplus_{i}\, \bw{i}{} N\ee_{X/Y}[i]$. It turns out that $\Gamma_{\sigma} \he$ is exactly the HKR isomorphism constructed in \cite{AC} associated with the quantization $\smash{\sigma\ee N_{X/Y} \he} $ of $\smash{N_{X/Y}\he} $.

\par \bigskip
Our first result provides sufficient conditions in order that two different retractions of the first formal injection of an analytic cycle define the same HKR isomorphism:
\begin{theorem}\label{11}
Let $(X,Y)$ be a pair of complex manifolds such that $X$ is a closed submanifold of $Y$ and let $\ba{j}$ be the injection of $X$ into its first neighbourhood $\bax$ in $Y$\!.
\begin{enumerate}
 \item [(1)] Assume that $N\ee_{X/Y}$ carries a global holomorphic connection. Then for any retractions $\sigma $ and $\sigma '$ of $\ba{j}$, $\pp_{\sigma }\he$ is naturally isomorphic to $\pp_{\sigma '}\he$.
\par \smallskip
\item[(2)] Let $\sigma $ and $\sigma '$ be two retractions of $\ba{j}$ such that the element $\sigma '-\sigma $ in $\emph{Hom}_{\oox }\he(\Omega ^{1}_{X}, N\ee_{X/Y})$ is an isomorphism. Then $\pp_{\sigma }\he$ is naturally isomorphic to $\pp_{\sigma '}\he$.
\end{enumerate}
 \end{theorem}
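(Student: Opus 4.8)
The plan is to construct, for any two retractions $\sigma $ and $\sigma '$ of $\ba{j}$, an explicit isomorphism of complexes $\Phi\colon\pp_{\sigma }\he\to\pp_{\sigma '}\he$ on $\bax$ — not merely a quasi-isomorphism, since both complexes are already quasi-isomorphic to $\oox$. As the retractions of $\ba{j}$ form an affine space over $\textrm{Hom}_{\oox}(\Omega^{1}_{X},N\ee_{X/Y})$, the two are linked by the single datum $h=\sigma '-\sigma $, and since the construction of $\pp_{\sigma }\he$ is functorial in $\sigma $, the terms and the differential $d_{\sigma '}$ of $\pp_{\sigma '}\he$ differ from those of $\pp_{\sigma }\he$ only by correction terms depending on $h$. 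The strategy is therefore to write both complexes out degreewise, to look for $\Phi$ degreewise in the shape \emph{tautological identification plus a correction built from $h$}, and to solve the chain-map equation $\Phi\circ d_{\sigma }=d_{\sigma '}\circ\Phi$. Because the conormal bundle sits inside $\bax$ as a square-zero ideal, only finitely many powers of $h$ intervene, so this is a finite procedure; all the content lies in removing the obstruction to solving it, which is what hypotheses (1) and (2) provide.

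For (1), the key point is that the $\sigma $-dependence of $\pp_{\sigma }\he$ — concretely, of the locally free $\oo_{\bax}$-module $\sigma\ee N_{X/Y}\he$ entering its terms, and of the twists appearing in $d_{\sigma }$ — is governed by the contraction of $h$ against the Atiyah class of $N\ee_{X/Y}$ (equivalently, of $N_{X/Y}\he$). A global holomorphic connection $\nabla$ on $N\ee_{X/Y}$ kills that Atiyah class, and, more concretely, provides for every bundle functorially built from $N\ee_{X/Y}$ a canonical identification of its $\sigma $-pullback with its $\sigma '$-pullback; combining these identifications, corrected by the appropriate powers of $h$, produces the wanted $\Phi$. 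It is canonical once $\nabla$ is fixed and reduces to the identity when $\sigma =\sigma '$, hence natural. The main obstacle here is the bookkeeping showing that the obstruction to the chain-map equation is \emph{exactly} this Atiyah-type class; this requires the explicit formulas for $d_{\sigma }$ and a careful side-by-side comparison of the two complexes, and it is at this point that one genuinely needs a connection on $N\ee_{X/Y}$ and not merely the retractions themselves.

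For (2), the hypothesis that $h\colon\Omega^{1}_{X}\to N\ee_{X/Y}$ is an isomorphism forces $\dim X=\codim_{Y}X$; this is the situation modelled on the diagonal embedding, where $\sigma $ and $\sigma '$ play the role of the two projections and $h=\pm\id$, so that (2) is precisely what is needed to settle the question of Arinkin--C\u{a}ld\u{a}raru recalled above. No auxiliary connection is available, but none is needed: one uses $h$ itself, together with its inverse and its exterior powers $\bw{k}{h}$, to match the pieces of $\pp_{\sigma }\he$ built from $\Omega^{1}_{X}$ with the pieces of $\pp_{\sigma '}\he$ built from $N\ee_{X/Y}$, and conversely. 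Invertibility of $h$ is exactly what lets these matchings assemble into a degreewise isomorphism, and conjugation by them carries the $\sigma $-twisted differential to the $\sigma '$-twisted one; naturality is automatic since $h$ is determined by the pair $(\sigma ,\sigma ')$. The obstacle here is of a different flavour, and perhaps more a matter of care than of depth: one must check that mere invertibility of $h$ — with no symmetry or self-duality assumption on it — suffices for $\Phi$ to commute with the differentials, and this verification rests on the precise combinatorial shape of the terms and differential of $\pp_{\sigma }\he$.
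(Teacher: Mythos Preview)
Your outline for (1) is in the right spirit, but your plan for (2) misses the key ingredient. In the paper both parts are handled by a single mechanism: since $\pp_{\sigma}\he=\psi\ee_{\sigma}\pp$ and $\pp_{\sigma'}\he=\psi\ee_{\sigma'}\pp$ for the fixed complex $\pp$ of $\bb$-modules, an isomorphism $\pp_{\sigma}\he\to\pp_{\sigma'}\he$ of $\oo\baxx$-complexes amounts to a $u_{\chi}\he$-linear automorphism of $\pp$, where $\chi=\sigma'-\sigma$ and $u_{\chi}\he(i,a)=(i+\chi(a),a)$. One seeks it in the unipotent form $\varphi_{-p}\he(\ubi,\ubj)=(\ubi+R_p\he(\ubj),\ubj)$ with $R_p\he\colon\bw{p}{N\ee_{X/Y}}\to\bw{p+1}{N\ee_{X/Y}}$; since the differential of $\pp$ only sees $\ubj$, the chain-map condition is free, and a direct check shows that $u_{\chi}\he$-linearity holds if and only if $R_p\he(a\,\ubj)=aR_p\he(\ubj)+\chi(a)\we\ubj$. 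For (1) one takes $R_p\he=(\wh\chi\we\id)\circ\bw{p}{\nabla}$; the Leibniz rule for $\nabla$ gives exactly this identity. Your Atiyah-class remarks are true (and are made precise elsewhere in the paper via twisted AK complexes), but here the connection is used \emph{directly} to write down $R_p\he$, not merely as a certificate that an obstruction class vanishes.

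For (2) your description goes astray: no term of $\pp_{\sigma}\he$ is ``built from $\Omega^1_X$'' --- every term is an extension of $\bw{p}{N\ee_{X/Y}}$ by $\bw{p+1}{N\ee_{X/Y}}$ --- and exterior powers of $h$ alone furnish no map raising the exterior degree by one, which is what $R_p\he$ must do. The missing idea is the de~Rham differential. When $\wh\chi$ is invertible one transports $d\colon\Omega^p_X\to\Omega^{p+1}_X$ to the conormal side and sets $R_p\he=\bw{p+1}{\wh\chi}\circ d\circ(\bw{p}{\wh\chi})^{-1}$; the Leibniz rule for $d$ then yields $R_p\he(a\,\ubj)=aR_p\he(\ubj)+\wh\chi(da)\we\ubj=aR_p\he(\ubj)+\chi(a)\we\ubj$, exactly as required. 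So invertibility of $h$ is not used to ``conjugate the differential'' but to pull back the de~Rham operator; without invoking $d$ (or, as in (1), a connection) there is no natural source for an $R_p\he$ with the needed twisted Leibniz rule, and your proposed $\Phi$ cannot be made $u_{\chi}\he$-linear.
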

In the case of the diagonal injection, the quantizations $\pr_1 \he$ and $\textrm{pr}_{2}$ satisfy the second condition of the theorem, which gives a positive answer to the problem mentioned above.
\par \bigskip
Another important outcome of this construction is what we call the dual HKR isomorphism. To explain this notion, we consider the complex $\rh{\ooy}(\oox, \oox)$ corresponding to Hochschild cohomology in the case of the diagonal injection. This complex is well-defined up to a unique isomorphism in the bounded derived category  $D^{\textrm{b}}\be(\ooy)$ of sheaves of $\ooy$-modules, but not in $D^{\textrm{b}}\be(\oox)$. Indeed, the canonical isomorphism in $D^{\textrm{b}}\be(\ooy)$ between $\textrm{R}[\hoo_{\ooy} \he(\,*\,, \oox)]\, (\oox)$ and $\textrm{R}[\hoo_{\ooy}(\oox,\,*\,)]\, (\oox)$ is not induced in general by an isomorphism in the category $D^{\textrm{b}}\be(\oox)$. The purpose of the dual construction is to construct a specific isomorphism (the dual HKR isomorphism) between $\textrm{R}[\hoo_{\ooy}(\oox,\,*\,)]\, (\oox)$ and $\smash{\bigoplus_{i}\,\bw{i}{} N_{X/Y} \he[-i]} $ in $D^{\textrm{b}}\be(\oox)$. This is achieved by replacing $\oox$ by the dual complex $\smash{\hoo_{\oox}\he (\mathcal{P}_{\sigma} \he, \oox)}$, which is also a bounded complex of coherent sheaves on $\bax$.
\par \bigskip
The dual HKR isomorphism is a powerful tool, which has been used initially in \cite{KA} for the diagonal injection to give a functorial definition of Euler classes of coherent sheaves; it has led to a simple proof of the Grothendieck-Riemann-Roch theorem in Hodge cohomology for arbitrary proper morphisms between complex manifolds \cite{G}. We provide here another application: for any quantized analytic cycle $(X, \sigma)$ in a complex manifold $Y$\!, we construct a cohomology class $q_{\sigma} \he (X)$ in $\smash[b]{\bigoplus_{i}\,\textrm{H}^{i}\be (X, \bw{i}{} N\ee_{X/Y})} $ called the quantized cycle class of $(X, \sigma)$. We prove that this class provides an obstruction for $X$ to be defined as the vanishing locus of a transverse section of a holomorphic vector bundle on $Y$:
\begin{theorem}\label{22}
Let $(X,\sigma )$ be a quantized analytic cycle of codimension $r$ in $Y$ and assume that there exists a couple $(E,s)$ such that
\begin{enumerate}
 \item [(1)] $E$ is a holomorphic vector bundle of rank $r$ on $Y$ .
\item[(2)] $s$ is a holomorphic section of $E$ vanishing exactly on $X$, and $s$ is transverse to the zero section.
\item[(3)] The locally-free $\oox$-modules $E\oti_{\ooy}\he\oo\baxx$ and $\sigma \ee\be N_{X/Y}\he$ are isomorphic.
\end{enumerate}
Then $q_{\sigma} \he (X)=1$.
\end{theorem}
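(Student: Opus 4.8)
The plan is to compute the quantized cycle class from the Koszul resolution attached to $(E,s)$ rather than from the Atiyah--Kashiwara complex directly: a Koszul complex has vanishing differential once restricted to $X$, so the (dual) HKR isomorphism it produces is the trivial one, and the canonical class defining $q_{\sigma}\he(X)$ is then carried onto its leading term, which is $1$.

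Recall that $q_{\sigma}\he(X)$ is the image of a canonical class --- whose component of degree $0$ is the identity of $\oox$ --- under the dual HKR isomorphism attached to $\sigma$, the latter being obtained from the dual $\hoo_{\oox}\he(\pp_{\sigma}\he,\oox)$ of the Atiyah--Kashiwara complex. By the identification of $\Gamma_{\sigma}\he$ with the HKR isomorphism of \cite{AC}, this construction depends on $\sigma$ only through the quantization $\sigma\ee\be N_{X/Y}\he$, so by hypothesis~(3) one may replace that quantization throughout by $E\oti_{\ooy}\he\oo\baxx$. The restriction $\ba{s}$ of $s$ to $\bax$ is then a section of $E\oti_{\ooy}\he\oo\baxx$ vanishing along $X$, and transversality of $s$ says precisely that its $1$-jet along $X$ is an isomorphism; I would form the corresponding Koszul complex $\ba{K}^{\bullet}=\bigl(\bw{\bullet}{}(E\oti_{\ooy}\he\oo\baxx)\ee,\,\iota_{\ba{s}}\bigr)$, a complex of $\oo\baxx$-modules resolving $\oox$.

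The crux is to identify $\ba{K}^{\bullet}$ with $\pp_{\sigma}\he$ as complexes of $\oo\baxx$-modules --- or at least by an $\oo\baxx$-quasi-isomorphism lifting $\id_{\oox}$. On the level of the terms this is forced by the canonical description of the components of $\pp_{\sigma}\he$ as exterior powers of the quantized conormal bundle; the substantial point is to check that the Koszul differential $\iota_{\ba{s}}$ coincides with the differential of the Atiyah--Kashiwara complex built from the quantization $E\oti_{\ooy}\he\oo\baxx$. Granting this, $\pp_{\sigma}\he$ and $\ba{K}^{\bullet}$ produce the same dual HKR isomorphism; but the dual of $\ba{K}^{\bullet}$, restricted to $X$, has zero differential because $\ba{s}$ vanishes on $X$, so it equals $\bigoplus_{i}\bw{i}{}N_{X/Y}[-i]$ with its tautological splitting. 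With respect to that splitting the canonical class is concentrated in degree $0$, where it is the identity, and therefore $q_{\sigma}\he(X)=1$.

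The step I expect to be the main obstacle is precisely this matching of differentials: contraction with the quantized section $\ba{s}$ versus the differential of the Atiyah--Kashiwara complex. It forces one to unwind the construction of that complex for an arbitrary quantization and to see how the quantization $E\oti_{\ooy}\he\oo\baxx$ records the $1$-jet of $s$ along $X$; one must also track the isomorphism $E|_{X}\simeq N_{X/Y}$ induced by $ds$, so that no spurious automorphism of $N_{X/Y}$ creeps in and perturbs the higher components of the class. A variant that trades part of this bookkeeping for other work is to exhibit $X\hookrightarrow Y$ as the base change along $s$ of the zero section $Y\hookrightarrow\mathbb{V}(E)$ in the total space of $E$, for which the tautological projection $\mathbb{V}(E)\to Y$ induces a quantization with quantized cycle class $1$, and to invoke a base-change property of $q_{\sigma}\he$ --- hypothesis~(3) being exactly what matches $\sigma\ee\be N_{X/Y}$ with that pulled-back quantization.
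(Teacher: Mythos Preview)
Your strategy---model the HKR picture on the Koszul complex of $(E,s)$, whose differential dies along $X$---is exactly the paper's. But two shortcuts you take do not go through as stated, and the paper replaces them by explicit constructions.

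First, the terms of $\pp_{\sigma}\he$ are \emph{not} the $\oo\baxx$-exterior powers of the quantized conormal bundle: $(\pp_{\sigma}\he)_{-p}=\bwi{p+1}{\sigma}{\oo\baxx}$ is modeled on $\bwi{p+1}{\oox}{\bb}$ with $\bb=N\ee_{X/Y}\oplus\oox$, whereas your $\bar{K}^{-p}=\bwi{p}{\oo\baxx}{\sigma\ee N\ee_{X/Y}}$ has a different underlying $\oox$-module already for $p=1$. So the proposed termwise identification fails; one only gets a quasi-isomorphism. The paper builds it explicitly, and in fact works with the Koszul complex $\mathcal{L}$ on $Y$ rather than on $\bax$, producing an $\ooy$-linear quasi-isomorphism $\apl{\gamma}{\mathcal{L}}{\pp_{\sigma}\he}$ via $\ub{x}\otimes(i,a)\mapsto(i\wedge\ub{x},a\ub{x})$ (the global version of the map in Proposition~\ref{PropUnHKR}). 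Staying on $Y$ is essential because the very first step in the definition of $q_{\sigma}\he(X)$ is the identification $\omega_{X/Y}\he\simeq\rhr{\ooy}(\oox,\ooy)$, which needs a resolution over $\ooy$.

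Second, and this is the real gap: you invoke the Arinkin--C\u{a}ld\u{a}raru comparison to say the construction depends only on $\sigma\ee N_{X/Y}\he$, but that comparison (Theorem~\ref{PropTroisTwisted}) concerns $\Gamma_{\sigma}\he$, whereas $q_{\sigma}\he(X)$ is defined through the \emph{dual} HKR isomorphism $\wh{\Gamma}_{\sigma}\he$, built from $\qq_{\sigma}\he$. The passage between $\Gamma\ee_{\sigma}$ and $\wh{\Gamma}_{\sigma}\he$ is not formal; it is precisely Proposition~\ref{PropDeuxBisHKR}, which compares them in $D^{\textrm{b}}\be(\C_Y)$ up to explicit signs. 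The paper's proof runs as follows: express $j\ei\Delta$ through $\mathcal{L}$ and $\gamma$, introduce the automorphism $\psi$ measuring the discrepancy between the two HKR pictures, apply Proposition~\ref{PropDeuxBisHKR} to compute $\psi$ as a sign on each summand, and then use that $\pp_{\sigma}\he$ splits \emph{as a complex of $\C_Y$-modules} to read off that $j\ei\ti{\Delta}$ is the bare inclusion of $\oox$. Without this comparison lemma, your assertion that ``with respect to that splitting the canonical class is concentrated in degree $0$'' is the statement of the theorem rather than an argument for it.
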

For the diagonal injection, it follows from the results of \cite{MA}, \cite{RA1}, \cite{G} and \cite{RA2} that $q_{\,\textrm{pr}_1}(\Delta_X)$ is the Todd class of $X$. Up to the author's knowledge, the quantized cycle class $q_{\sigma} \he (X)$ has not yet appeared in the literature, and it would be interesting to compute it in purely geometrical terms.
\par \bigskip
To conclude this introduction, let us discuss the link of this construction with the generalized Duflo isomorphism. The aim of this isomorphism is to understand precisely how the HKR isomorphism between the algebras $\textrm{Ext}\ee_{\oo_{X \times X}}(\oox, \oox)$ and $\textrm{H}\ee(X, \bw{*} \, TX)$ fails to be multiplicative. After the seminal work \cite{KO}, the following result (conjectured in \cite{C1}) was proved:
\begin{theorem}[\cite{CV}]
For any complex manifold $X$, if $\Gamma$ denotes the standard \emph{HKR} isomorphism between $\emph{Ext}\ee_{\oo_{X \times X}}(\oox, \oox)$ and  $\textrm{H}\ee(X, \bw{*} \, TX)$, then $(\emph{td}(X))^{-1/2} \lrcorner \, \Gamma$ is a ring isomorphism.
\end{theorem}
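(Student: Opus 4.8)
The plan is to deduce this from a derived, or $L_{\infty}$-theoretic, incarnation of the classical Duflo isomorphism, following the Atiyah-class route of \cite{CV} (the underlying local input being Kontsevich's formality theorem \cite{KO}). The first step is to sheafify everything over $X$. By the analytic HKR isomorphism one has $\mathbf{L}\Delta^{*}\Delta_{*}\oox \simeq \bigoplus_{p}\Omega^{p}_X[p]$ in $D^{\textrm{b}}(\oox)$, and, dualizing, $\rh{\oo_{X \times X}}(\oox,\oox) \simeq \bigoplus_{p}\bw{p}{}TX\,[-p]$; the standard map $\Gamma$ is then $\textrm{R}\Gamma(X,-)$ applied to the latter. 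Now $\mathbf{L}\Delta^{*}\Delta_{*}\oox = \oox \oti^{\mathbb{L}}_{\oo_{X \times X}}\oox$ is a commutative algebra object in $D^{\textrm{b}}(\oox)$, hence $\rh{\oo_{X \times X}}(\oox,\oox)$ is an associative algebra object there (the sheafified Yoneda product), and the theorem asserts exactly that HKR does not respect this product. Thus it suffices to produce an \emph{isomorphism of algebra objects} $\bigoplus_{p}\bw{p}{}TX[-p] \xrightarrow{\ \sim\ } \rh{\oo_{X\times X}}(\oox,\oox)$ equal to the HKR map precomposed with contraction by $\td(X)^{-1/2} \in \bigoplus_{q}\HH^{q}(X,\Omega^{q}_X)$; passing to hypercohomology then yields the ring isomorphism, the twist being compatible with cup products because $\td(X)^{-1/2}$ is a globally defined class.

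The substance of the argument is Kapranov's and Markarian's observation (see also \cite{MA}, \cite{RA1}, \cite{RA2}) that the two algebra objects above are the Chevalley--Eilenberg complex and the enveloping-algebra complex of one and the same derived Lie algebra. Namely, the shifted tangent complex $\mathfrak{g}:=TX[-1]$ is a Lie (in fact $L_{\infty}$) algebra object whose binary bracket is the Atiyah class $\textrm{at}_X \in \ext^{1}_{\oox}(TX\otimes TX,\,TX)$ and whose higher brackets come from its iterated powers; then $\mathbf{L}\Delta^{*}\Delta_{*}\oox$ is the Chevalley--Eilenberg complex of $\mathfrak g$, while $\rh{\oo_{X\times X}}(\oox,\oox)$ is its enveloping-algebra complex, equipped with the Poincar\'e--Birkhoff--Witt ($=$ HKR) filtration with associated graded $\bigoplus_{p}\bw{p}{}TX[-p]=\textrm{Sym}(\mathfrak g)$, the HKR map being the PBW map $\textrm{Sym}(\mathfrak g)\to\textrm{U}(\mathfrak g)$. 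Exactly as in finite-dimensional Lie theory, PBW is not multiplicative on invariants but becomes so after the Duflo correction by a square root of the element $j(\mathfrak g)$, a universal determinantal expression in $\textrm{ad}$; here, however, $\textrm{ad}$ is the operator defined by $\textrm{at}_X$, whose traces of powers are, up to scalars, the components of the Chern character of $TX$, so $j(\mathfrak g)$ is a universal series in the Chern classes of $X$. By Atiyah's Chern--Weil description of characteristic classes this series is precisely the Todd class $\td(X)$ — the degree shift in $\mathfrak g=TX[-1]$ being responsible for the appearance of the Todd series in place of the $\widehat{A}$-type series of the classical Duflo theorem — so the correction needed to make $\Gamma$ multiplicative is contraction by $\td(X)^{-1/2}$, as stated.

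The principal obstacle is to globalize this Lie-algebraic picture over $X$: the bracket $\textrm{at}_X$, the PBW map and the Duflo element are naturally defined only after an auxiliary choice (a connection, a stalk, or a formal coordinate system), and one must glue the resulting family of isomorphisms into a single morphism in $D^{\textrm{b}}(\oox)$. I would follow \cite{CV} and work with Dolbeault resolutions: represent $\textrm{at}_X$ by the curvature of a Hermitian connection on $TX$, construct the $L_{\infty}$-structure on $\mathfrak g$, the PBW isomorphism and the Duflo operator at the level of Dolbeault complexes, and verify that the Duflo defect is contraction against an explicit $\bar\partial$-closed form representing $\td(X)^{-1/2}$; naturality in the chosen connection makes the construction independent of the auxiliary data, and the local corrections patch precisely because the ambiguity is absorbed by the global class $\td(X)^{-1/2}$. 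Alternatively one may invoke Kontsevich's formality theorem \cite{KO} together with the compatibility of the formality quasi-isomorphism with cup products and its holomorphic globalization by Fedosov-type resolutions. Either way one obtains the sought isomorphism of algebra objects after the $\td(X)^{-1/2}$-twist, whence the theorem on taking hypercohomology; checking that the characteristic class assembled from the Atiyah class is exactly the Todd class, with the correct normalization, is the delicate point.
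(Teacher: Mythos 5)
This statement is not proved in the paper at all: it is quoted in the introduction as background, with the proof deferred entirely to \cite{CV} (after \cite{KO} and the conjecture of \cite{C1}), so there is no internal argument to compare yours against. Judged on its own terms, your text is a reasonable roadmap of the literature proof, but it is a roadmap rather than a proof: the two places where you write ``naturality\ldots makes the construction independent of the auxiliary data'' and ``checking that the characteristic class assembled from the Atiyah class is exactly the Todd class\ldots is the delicate point'' are not delicate points inside an otherwise complete argument — they \emph{are} the theorem. The compatibility of Kontsevich's formality quasi-isomorphism with cup products, its holomorphic globalization by Fedosov/Dolbeault-type resolutions, and the identification of the resulting correction with $\td(X)^{1/2}$ constitute the whole content of \cite{CV}; invoking them as steps reduces your proof to a citation of the result being proved.

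There are also genuine mathematical gaps in the Lie-theoretic reduction as you state it. The analogy ``exactly as in finite-dimensional Lie theory, PBW is not multiplicative on invariants but becomes so after the Duflo correction'' does not transfer formally: $\mathfrak g = TX[-1]$ is only an $L_\infty$ (or Lie object in $D^{\mathrm b}(\oox)$) structure with bracket the Atiyah class, the ``invariants'' are replaced by derived global sections, and the classical Duflo theorem gives no statement at this level — one needs the Kontsevich/Tamarkin-type formality input, which is precisely what cannot be taken for granted. Likewise the identifications ``$\mathbf{L}\Delta^{*}\Delta_{*}\oox$ is the Chevalley--Eilenberg complex'' and ``$\mathcal{RH}om_{\oo_{X\times X}}(\oox,\oox)$ is the enveloping-algebra complex, HKR $=$ PBW'' are theorems of Kapranov/Markarian/Ramadoss that require proof of multiplicative (not merely additive) compatibility, and the passage from an isomorphism of algebra objects in $D^{\mathrm b}(\oox)$ to a ring isomorphism on hypercohomology needs the twist by $\td(X)^{-1/2}$ to be implemented by a morphism in the derived category compatible with the products, which is exactly the subtle sign/normalization issue you defer. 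So the proposal correctly identifies the known strategy but does not close any of the steps that make the statement a theorem.
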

For general cycles, the algebra $\smash[t]{\textrm{Ext}\ee_{\ooy}(\oox, \oox)} $ is no longer graded commutative, and its structure is the object of current active research (see the program initiated in \cite{CCT1}). The quantized cycle class $\smash{q_{\sigma} \he (X)} $, which generalizes the Todd class for arbitrary quantized analytic cycles, is likely to play a part in the understanding of this algebra.
\par \bigskip
Let us describe more precisely the outline of this article. After a preliminary section (\S~\ref{SectAlgExt}), it is divided into  three main parts: the local construction of HKR isomorphisms is carried out in \S~\ref{LocCx}, these results are globalized in \S~\ref{Atiyah Global} and provide an application in \S~\ref{cycle} where we construct and study the quantized cycle class. We now turn to the specific organization of each part.
\par \medskip
In \S~\ref{cup}, we recall some elementary constructions in exterior algebra such as contraction morphisms and Koszul complexes, mainly to fix sign conventions. In \S~\ref{DgAlg}, an abstract construction on dg-algebras is performed, the aim of which is to provide a general setting for Atiyah-Kashiwara complexes.
\par \medskip
At the beginning of \S~\ref{LocCx}, we define specific notation for the derived functors of the functor Hom and for the tensor product, since they cannot be derived as bifunctors in our setting. In \S~\ref{LocHKR} are defined the Atiyah-Kashiwara complex (Definition \ref{DefUnHKR}) together with the dual Atiyah-Kashiwara complex (Definition \ref{DefDeuxHKR}); and in Propositions \ref{PropUnHKR} and \ref{PropDeuxHKR} we establish the corresponding local HKR isomorphisms. The proofs we give here are bound to extend naturally to a global setting. In Proposition \ref{PropDeuxBisHKR}, we compare in a weak sense the HKR and dual HKR isomorphisms. The argument of this proof will be used anew in the proof of Theorem \ref{BaProCyClThUn} (which is Theorem \ref{22} in this introduction). In \S~\ref{SectionArikinCalda}, the construction performed in \S~\ref{LocHKR} is compared to the construction of \cite{AC} in the local case, and both are shown to be compatible in Proposition \ref{PropUnArinkinCalda}. In \S~\ref{SousSecAKComplexes}, Proposition \ref{PropTroisHKR} provides conditions to construct naturally automorphisms of Atiyah-Kashiwara complexes, and is the local version of Theorem \ref{11}.
\par \medskip
The next part (\S~\ref{Atiyah Global}) deals with the complex analytic case. In the first section (\S~\ref{anhkr}), the results of \mbox{\S~\ref{LocHKR}} and \mbox{\S~\ref{SousSecAKComplexes} }are stated in a global setting, Propositions \ref{PropUnAnalyticHKR}, \ref{HKR2} and Theorem \ref{PropDeuxAnalyticHKR} (which is Theorem \ref{11} in this introduction) extending Propositions \ref{PropUnHKR}, \ref{PropDeuxHKR} and \ref{PropTroisHKR} respectively. In \S~\ref{TwistedCase}, we explain how to twist Atiyah-Kashiwara complexes by extension classes. In Proposition \ref{PropUnTwisted}, we prove that two Atiyah-Kashiwara complexes associated with different retractions become isomorphic after twisting by extension classes depending on the Atiyah class of the conormal bundle $\smash{N\ee_{X/Y}} $. In Theorem \ref{PropDeuxTwisted} we recall (in slightly more general terms) the principal result of \cite{AC} and we prove in Theorem \ref{PropTroisTwisted} that, when the cycle admits an infinitesimal retraction, the HKR isomorphisms of \cite{AC} associated with arbitrary quantizations of the normal bundle are again twisted HKR isomorphisms in our sense. In the case of the canonical quantization associated with a retraction, we obtain the compatibility of HKR isomorphisms (this globalizes proposition \ref{PropUnArinkinCalda}). The aim of \S~\ref{comparison} is to study and carefully compare twisted HKR isomorphisms (and so to compare HKR isomorphisms associated with different retractions, thanks to Proposition \ref{PropUnTwisted}). We give some results in particular cases, namely when the twist are obtained by tensorization with holomorphic line bundles on $\bax$ (Theorem \ref{ThUnCompHKR}), and then for general extension classes when only the last but one term of each Atiyah-Kashiwara complex is twisted (Theorem \ref{ModuleLunaireUn}). As a corollary, we deduce in Theorem \ref{ModuleLunaireDeux} the general comparison theorem between HKR isomorphisms associated with different retractions for cycles of codimension two. We are led to propose a conjecture for the general case (Conjecture \ref{conj}).
\par \medskip
The last part (\S~\ref{cycle}) deals with the quantized cycle class. In \S~\ref{cycle1}, using the dual HKR isomorphism, we define this cycle class and compute it in specific cases.  In Theorem \ref{BaProCyClThUn} (which is Theorem \ref{22} in this introduction), we prove that the quantized cycle class is one when the cycle $X$ is the zero locus of a transverse section of a holomorphic vector bundle on $Y$ satisfying a compatibility condition with the retraction $\sigma$. In Theorem \ref{BaProCyClThDeux}, we obtain that the class $q_{\,\textrm{pr}_1}(\Delta_X)$ is the Todd class of $X$; this is equivalent to the main result of \cite{G}. Finally, we deal with the divisor case in theorem \ref{divisor}. Preliminary constructions for  \S~\ref{kash} are carried out in \S~\ref{six}: if $j$ denotes the injection of the cycle $X$ into $Y$\!, we study the right and left adjoints $j\ee$ and $j\pe$ of the direct image functor $j\ei$ operating on the corresponding derived categories. In \S~\ref{kash}, following \cite[chap.\ 5]{KS1} for the diagonal injection, we establish in Theorem \ref{KashIsoThDeux} that the natural isomorphism between $j\ee\be j\ei\he\oox \, \lltens{}_{\oox}\he  \omega _{X/Y}\he$ and $j\pe\be j\ei\he\oox$ obtained using the local cycle class of $X$ in $Y$ is given via the HKR isomorphisms by contraction with the quantized cycle class $q_{\sigma} \he (X)$.
\par\bigskip
\textbf{Acknowledgments.} I wish to thank Pierre Schapira who has encouraged me all along, and also Damien Calaque and Richard Thomas for useful conversations and comments.
\section{Preliminary constructions}\label{SectAlgExt}
\subsection{Duality and cup-product}\label{cup}
Let $r$ be a positive integer, $A$ be a commutative $k$-algebra over a field $k$ of characteristic zero, and $E$ be a free $A$-module of rank $r$. In this section, all tensor and exterior products are taken over $A$.
\begin{definition}\label{DefUnSecAlgExt}
 For any nonnegative integers $p$ and $q$, we denote by $\apl{W_{p,\,q}}{\bwi{p+q}{}E}{\bw{p}{E}\oti\!\bw{q}{E}}$
the transpose of the cup-product map from $\bw{p}{\Ee}\oti\bw{q}{\Ee}$ to $\bw {p+q}{\Ee}$ multiplied by $\dfrac{p! \, q!}{(p+q)!}$.
\end{definition}
\par \medskip
It is possible to give another natural definition of $W_{p,q}$ as follows: for any nonnegative integer $n$, let $\mathfrak{S}_n \he$ denote the symmetric group with $n$ letters, and let $\apl{\varepsilon}{\mathfrak{S}_n \he}{\{-1, \,1\}}$ be the signature morphism. We define the symmetrization and antisymmetrization maps
$\apl{\mathfrak{a}_n \he}{\bigo{}{n}E}{\bw{n \be}{E}}$ and $\apl{\mathfrak{s}_n \he}{\bw{n \be}{E}}{\bigo{}{n}E}$ by the formulae below:
\begin{equation} \label{EqUnPrelim}
\begin{cases}
\mathfrak{a}_n \he \, (v_{1}\he\oti\dots\oti v_{n}\he)=v_{1}\he\we\dots\we v_{n}\he \\
\mathfrak{s}_n \he \, (v_{1}\he\we\dots\we v_{n}\he)=\dfrac{1}{n!} \, \ds\sum_{\sigma \in \mathfrak{S}_n \he} \varepsilon(\sigma)\, v_{\sigma(1)}\he\oti\dots\oti v_{\sigma(n)}\he\\
\end{cases}
\end{equation}
A straightforward computation shows that
\begin{equation} \label{EqDeuxBisSecAlgExt}
W_{p,q}\he(v_1 \he \we \ldots \we v_{p+q} \he)= \dfrac{p! \, q!}{(p+q)!} \ds \sum_{\sigma} \varepsilon(\sigma) \, (v_{\sigma(1)}\he\we\dots\we v_{\sigma(p)}\he) \, \oti \, (v_{\sigma(p+1)}\he\we\dots\we v_{\sigma(p+q)}\he)
\end{equation}
where $\sigma$ runs through all $(p\,$-$q)$ shuffles, which implies that $W_{p,q}=(\mathfrak{a}_{p} \oti \mathfrak{a}_{q}) \circ \mathfrak{s}_{p+q}.$
\begin{definition}\label{DefUnBisSecAlgExt}
 For any nonnegative integers $m$, $p$, $k$ and any $\phi $ in $\textrm{Hom}(\bw{p}{E}, \bw{k}{E})$, we define $t_{k,\,p} ^{\,m}(\phi )$ in $\textrm{Hom}(\bw{p+m}{E}, \bw{k+m}{E})$ by the composition
\[
\xymatrix@C=20pt{t_{k,p} ^{\,m}(\phi )\,:\,\bw{p+m}{E}\ar[rr]^-{W_{p,\,m}\he}&&\bw{p}{E}\oti \bw{m}{E}\ar[rr]^-{\phi\, \oti\,\id}&&\bw{k}{E}\oti \bw{m}{E}\ar[r]^-{\we}&\bw{k+m}{E.}
}
\]
\end{definition}
The translation operator $\apl{t_{k,\,p}^{\,m}(\phi )}{\textrm{Hom}(\bw{p}{E}, \bw{k}{E})}{\textrm{Hom}(\bw{p+m}{E}, \bw{k+m}{E})}$ satisfies the following important property:
\begin{lemma}\label{LemUnBisSecAlgExt}
For any nonnegative integers $m$, $p$, $k$ such that $k\geq p$ and for any $a$ in $\bw{k-p}E$, we have $t_{k, \,p}^{\,m}(a\we\,.\,)=a\we\,.$
\end{lemma}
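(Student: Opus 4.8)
The claim is that if $\phi \in \operatorname{Hom}(\bw{p}{E}, \bw{k}{E})$ is itself left-multiplication by a fixed element $a \in \bw{k-p}{E}$, then the translate $t_{k,p}^{\,m}(\phi)$ is again left-multiplication by $a$, now viewed as an element of $\operatorname{Hom}(\bw{p+m}{E}, \bw{k+m}{E})$. The natural approach is to evaluate $t_{k,p}^{\,m}(a \wedge \,\cdot\,)$ on a decomposable element $v_1 \wedge \dots \wedge v_{p+m}$ and chase it through the three arrows defining $t_{k,p}^{\,m}$ in Definition \ref{DefUnBisSecAlgExt}: first $W_{p,m}$, then $\phi \otimes \id = (a \wedge \,\cdot\,) \otimes \id$, then the wedge map. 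Since decomposable elements span $\bw{p+m}{E}$ and all maps are $A$-linear, it suffices to check equality on such elements.

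First I would apply the shuffle formula \eqref{EqDeuxBisSecAlgExt} to write
\[
W_{p,m}(v_1 \wedge \dots \wedge v_{p+m}) = \frac{p!\,m!}{(p+m)!} \sum_{\sigma} \varepsilon(\sigma)\, (v_{\sigma(1)} \wedge \dots \wedge v_{\sigma(p)}) \otimes (v_{\sigma(p+1)} \wedge \dots \wedge v_{\sigma(p+m)}),
\]
the sum over $(p\text{-}m)$ shuffles. Applying $(a \wedge \,\cdot\,) \otimes \id$ and then the wedge product turns each summand into
\[
\varepsilon(\sigma)\, a \wedge v_{\sigma(1)} \wedge \dots \wedge v_{\sigma(p+m)}.
\]
Now the key observation is that $a \wedge v_{\sigma(1)} \wedge \dots \wedge v_{\sigma(p+m)} = \varepsilon(\sigma)\, a \wedge v_1 \wedge \dots \wedge v_{p+m}$, because permuting the $v_i$'s by $\sigma$ inside the wedge introduces exactly the sign $\varepsilon(\sigma)$ (the extra factor $a$ sits in front and is untouched). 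Hence each summand equals $a \wedge v_1 \wedge \dots \wedge v_{p+m}$, independently of $\sigma$, so the whole expression is $\frac{p!\,m!}{(p+m)!} \cdot \bigl(\#\{(p\text{-}m)\text{ shuffles}\}\bigr) \cdot a \wedge v_1 \wedge \dots \wedge v_{p+m}$.

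Finally I would invoke the standard count that the number of $(p\text{-}m)$ shuffles of $p+m$ letters is $\binom{p+m}{p} = \frac{(p+m)!}{p!\,m!}$, so the normalizing constant cancels exactly and we are left with $a \wedge v_1 \wedge \dots \wedge v_{p+m} = (a \wedge \,\cdot\,)(v_1 \wedge \dots \wedge v_{p+m})$, as desired. There is no real obstacle here: the only points requiring a moment's care are the sign bookkeeping (making sure the sign picked up by reordering the $v_i$'s precisely cancels $\varepsilon(\sigma)$, which it does since $a$ is a bystander) and confirming the shuffle count against the combinatorial prefactor — both are routine. One could alternatively phrase this more conceptually by noting that $t_{k,p}^{\,m}$ is, by construction, compatible with the associative algebra structure on $\bw{\bullet}{E}$ in the sense that translating by a composition respects wedging, but the direct computation above is the most transparent.
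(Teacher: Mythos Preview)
Your proof is correct and follows exactly the same approach as the paper: apply the shuffle formula \eqref{EqDeuxBisSecAlgExt} to a decomposable element, observe that the sign $\varepsilon(\sigma)$ cancels against the sign picked up by reordering the $v_i$'s inside the wedge, and conclude using the shuffle count $\binom{p+m}{p}$. The paper compresses the sign cancellation and the counting into a single equality, but your more explicit bookkeeping is the same argument.
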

\begin{proof}
 By (\ref{EqDeuxBisSecAlgExt}), for any $e_{1}\he,\dots, e_{p+m}\he$ in $E$, we have
\begin{align*}
t_{k, \, p}^{\,m}(a\we\,.\,)\,(e_{1}\he\we\dots\we e_{p+m}\he)&=\dfrac{p! \, m!}{(p+m)!} \ds \sum_{\sigma} \varepsilon(\sigma) \, \, a \we e_{\sigma(1)}\he\we\dots\wedge e_{\sigma(p)}\he \we e_{\sigma(p+1)}\he\we\dots\wedge e_{\sigma(p+m)}\he\\
&=a\we e_{1}\he\we\dots\we e_{p+m}\he.
\end{align*}

\end{proof}

For any positive integer $p$, any vector $v$ in $\bw{}{E}$ defines two endomorphisms $\ell_v$ and $r_v$ of $\bw{}{E}$ given by $\ell_v(x)=v \we x$ and $r_v(x)=x \we v$.  The map ${}^t r_v\,$ (resp.\ ${}^t \ell_v\,$) is by definition the \textit{left} (resp.\ \textit{right}) contraction by $v$; it is an endomorphism of $\bw{}{\Ee}$ denoted by $\smash{\xymatrix@C=17pt{\phi\ar@{|->}[r]&v\lrcorner\, \phi}} $ (resp.\,$\smash{\xymatrix@C=17pt{\phi\ar@{|->}[r]&\phi \, \llcorner\, v).}} $ The left (resp.\ right) contraction morphism endows $\bw{}{\Ee}$ with the structure of a left (resp.\ right) $\bw{}{E}$-module.
\par \bigskip
There are two duality isomorphisms $D^{\ell}\be$ and $D^{r}\be$ from $\bw{}{E} \oti \textrm{det}\, E\ee$ to $\bw{}{\Ee}$ given by \begin{align}
D^{\ell}(v \oti \xi)=v \lrcorner \, \xi \qquad \textrm{and} \qquad D^{r}(v \oti \xi)=\xi \, \llcorner \, v
\end{align}
Remark that $D^{\ell}\be$ (resp.\ $D^{r}\be$) is an isomorphism of left (resp.\ right) $\bw{}{E}$-modules.
\par\bigskip
We pause for a moment in order to discuss sign conventions concerning contraction morphisms. Let $\smash{\Delta_r=\{(i,j) \in \N^2 \, \, \textrm{such that}\, \,  i+j \leq r\}} $. For any sign function $\smash{\apl{\chi}{\Delta_r \he}{\Z_2 \he,}} $ we can consider the left (resp.\ right) twisted contraction morphism from $\bw{}{E} \oti \bw{}{\Ee}$ (resp.\ $\bw{}{\Ee} \oti \bw{}{E}$) to $\bw{}{\Ee}$ defined on homogeneous elements by the formula
\[
\begin{cases}
v \, \lrcorner_{\chi} \he \, \phi=\chi[\textrm{deg(}v\textrm{)}, r-\textrm{deg(}\phi\textrm{)}] \, \,  v\, \lrcorner \, \phi \\
\phi \, \llcorner_{\chi} \he \, v=\chi[\textrm{deg(}v\textrm{)}, r-\textrm{deg(}\phi\textrm{)}] \, \,  \phi\, \llcorner \, v
\end{cases}
\]
A routine computation shows that the left (resp.\ right) twisted contraction by a sign function $\chi$ defines a left (resp.\ right) action of $\bw{}{E}$ on $\bw{}{\Ee}$ if and only if $\chi$ is one of the four following functions:
\par\smallskip
\begin{enumerate}
\item[--] $\,\chi(p,q)=1$
\par \medskip
\item[--] $\,\chi(p,q)=(-1)^p \be$
\item[--] $\,\chi(p,q)=(-1)^{\frac{p(p+1)}{2}+pq} \be$
\item[--] $\,\chi(p,q)=(-1)^{\frac{p(p+1)}{2}+pq+p} \be$
\end{enumerate}
\par \medskip
Therefore there are \textit{four} different sign conventions for a left (resp.\ right) action of $\bw{}{E}$ on $\bw{}{\Ee\!.}$
\par\bigskip
We end this section with Koszul complexes. Let $M$ be an \mbox{$A$-module} and let $\phi $ be an $A$-linear form on $M$. The \emph{Koszul complex} $L(M,\phi )$ is the exterior algebra $\bwi{}{A}{M}$ endowed with the differential $\delta $ of degree $-1$ given for any positive integer $p$ by
\[
\delta _{p}\he(m_{1}\he\we\dots\we m_{p}\he)=\sum_{i=1}^{p}\,(-1)^{i-1}\be\phi (m_{i}\he)\,\,
m_{1}\he\we\dots\we m_{i-1}\he\we m_{i+1}\he\we\dots\we m_{p}\he.
\]
If $x_{1}\he,\dots,x_{k}\he$ are elements in $A$, we recover the classical Koszul complex associated with the $x_{i}\he$'s by taking $M=A^{k}\be$ and $\phi (a_{1}\he,\dots a_{k}\he)=\sum_{i=1}^{k}x_{i}\he a_{i}\he$.
\par\medskip
Assume now that $M$ is free of finite rank $r$, and consider $M$ as the dual of $M\ee\be $. In this case, $\delta $ is exactly the right contraction by $\phi $ acting on $\bwi{}{A}{M}$. Using the standard sign convention for $\textrm{Hom}$ complexes (see for instance \cite{KS1} Remark 1.8.11 and \cite{D} Remark 1.1.11), the differential $\delta \ee\be$ of $L\ee\be$ is given for any nonnegative integer $p$ by
\[
\delta \ee_{p}=(-1)^{p+1}\be\,\phi \,\we.=-\,(\,.\we\phi )
\]
Thus, the right duality morphism $\aplexp{D^{r}\be}{\bw{}{M\ee\be\oti\det M}}{\bw{}{M}}{\sim}$ induces an isomorphism
\begin{equation}\label{EqTroisPrelim}
 (L\ee\be,\delta \ee\be)\simeq (L,-\delta ) \oti_{A}\he \det M\ee\be[-r].
\end{equation}

\subsection{Extensions and dg-algebras}\label{DgAlg}
Let $A$ be a (non necessarily commutative) unitary algebra over a field $k$ of characteristic zero, $I$ be a $A\oti A^{\textrm{op}}\be$-module, and $B$ be the trivial $k$-extension of $A$ by $I$. This means that $B=I\oplus A$, endowed with the algebra structure defined by
\[
(i,a)\,.\,(i',a')=(i a'+a i',aa').
\]
\par\smallskip
Let us take a dg-algebra $(\aaa, d\,)$ over $k$ concentrated in positive degrees, whose differential has degree $-1$, and satisfying the following compatibility condition:
\begin{equation}\label{EqUnDgAlg}
 \emph{The truncated dg-algebra}\ \smash[t]{\flgdexp{\aaa_{1}\he}{\aaa_{0}\he}{d_{1}\he}}\  \emph{is  isomorphic to}\
\smash[t]{\flgdexp{B}{A.}{\pr_{2}\he}}
\end{equation}
We denote by $|a|$ the degree of an homogeneous element $a$ in $\aaa$.
\begin{definition}\label{DefUnDgAlg}
 Let $\bb$ denote the graded module $\bop\nolimits_{k\ge 1}\aaa_{k}\he$, where each $\aaa_{k}\he$ sits in degree $k-1$.
\begin{enumerate}
 \item [(1)] For any homogeneous elements $a$ and $a'$ in $\bb$, we put
\[
a*a'=a\,.\,da'+(-1)^{|a|+1}\be da\,.\, a'+(-1)^{|a|}\be\, da\,.\,1_{B}\he\,.\,da'
\]
where $1_{B}\he=(0, 1_{A})$ is the the unit of $B$ considered as an element of $\aaa_{1}\he$.
\item[(2)] We endow $\bb$ with a differential $\wh{d}$ of degree $-1$ given for any positive integer $k$ by $\wh{d}_{k}\he=k\,d_{k+1}$.
\end{enumerate}
\end{definition}
Remark that via the isomorphism between $\aaa_{1}\he$ and $B$, the product $*$ from $\aaa_{1}\he\oti_{k}\he\aaa_{1}\he$ to  $\aaa_{1}\he$ is exactly the product in the algebra $B$.
\begin{proposition}\label{PropUnDgAlg}
 For any dg-algebra satisfying \emph{(\ref{EqUnDgAlg})}, $(\bb,*,\wh{d} \,)$ is a dg-algebra.
\end{proposition}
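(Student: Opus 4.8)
The plan is to verify directly that $(\bb, *, \wh{d}\,)$ satisfies the three axioms of a dg-algebra: associativity of $*$, the existence of a unit, and the Leibniz rule relating $\wh{d}$ and $*$. The key observation driving everything is that $\bb$ is built from $\aaa$ by a "shift-and-reindex" operation, so each computation in $\bb$ should be pulled back to an identity in $\aaa$, where we may freely use that $(\aaa, d\,)$ is a dg-algebra (in particular $d^2 = 0$, the graded Leibniz rule $d(xy) = dx \cdot y + (-1)^{|x|} x \cdot dy$, and that $d$ is $\aaa_0$-bilinear on $\aaa$). A useful auxiliary remark to record first is that for homogeneous $a \in \bb$ sitting in $\aaa_k$ (so $|a|_{\bb} = k-1$), one has $da \in \aaa_{k-1}$, and multiplying by $1_B \in \aaa_1$ on either side is the identity on $\aaa_{k-1}$ when $k \geq 2$; this is what makes the correction term $(-1)^{|a|} da \cdot 1_B \cdot da'$ in the definition of $*$ the "right" normalization and will be used repeatedly.

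First I would check the Leibniz rule $\wh{d}(a * a') = \wh{d}a * a' + (-1)^{|a|} a * \wh{d}a'$. Writing $a \in \aaa_k$, $a' \in \aaa_\ell$, the left side is $\wh{d}_{k+\ell-1}$ applied to a sum of three terms, and $\wh{d}_j = j\, d_{j+1}$; expanding $d$ of each of the three summands of $a * a'$ via the graded Leibniz rule in $\aaa$ and $d^2 = 0$, and carefully tracking the degree shifts so that each application of $d$ comes with the correct integer multiplier, the terms should reorganize into exactly $\wh{d}a * a' + (-1)^{|a|} a * \wh{d}a'$. The bookkeeping of the combinatorial factors (the integers $k$, $\ell$, $k+\ell-1$ coming from the $\wh{d}_j = j d_{j+1}$ convention) against the signs $(-1)^{|a|+1}$, $(-1)^{|a|}$ in the definition of $*$ is where the only real delicacy lies. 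Next I would verify the unit: the natural candidate is $1_B = (0, 1_A) \in \aaa_1$, sitting in degree $0$ of $\bb$; since $d\, 1_B = d_1 1_B \in \aaa_0$ and, under the identification (\ref{EqUnDgAlg}), $d_1 = \pr_2$ so $d\, 1_B = 1_A$, one computes $1_B * a = 1_B \cdot da + da + (-1)\, 1_A \cdot 1_B \cdot \ldots$ — here one uses $d(1_B \cdot a) = d\,1_B \cdot a + 1_B \cdot da$ together with $d^2 = 0$ — and this should collapse to $a$, and symmetrically $a * 1_B = a$.

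The main obstacle will be associativity, $(a * a') * a'' = a * (a' * a'')$. Expanding each side produces a sum of many terms (each a product of the form $\pm\, (\text{possibly } d\text{ of } a) \cdot (\text{possibly } d\text{ of } a') \cdot (\text{possibly } d\text{ of } a'')$, interspersed with factors of $1_B$), and the strategy is to expand both sides fully, apply the graded Leibniz rule and $d^2 = 0$ in $\aaa$ to push all differentials onto the individual factors, and then match terms. The conceptually clean way to organize this — which I would adopt to avoid a brute-force mess — is to recognize that $*$ is secretly the multiplication transported through a map: up to the degree shift, $a * a'$ is essentially $d(\iota(a)\iota(a'))$-type expression, or equivalently $\bb$ with $*$ is the "décalage" of the dg-algebra $\aaa$ whose product has been twisted by the idempotent-like element $1_B$. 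Concretely I would introduce the operator $P(a) = da \cdot 1_B$ (a projection-type map $\aaa_k \to \aaa_k$ via $\aaa_k \xrightarrow{d} \aaa_{k-1} = \aaa_{k-1}\cdot 1_B \hookrightarrow \aaa_k$ when interpreted correctly) and rewrite $a * a' = a \cdot da' + (-1)^{|a|+1} da \cdot a' + (-1)^{|a|} da \cdot 1_B \cdot da'$ in a form where associativity reduces to the associativity of $\cdot$ in $\aaa$ plus the two Leibniz identities; the cross-terms involving $1_B$ cancel in pairs precisely because $d^2 = 0$ and because $d$ restricted to the image of "$\cdot\, 1_B$" behaves correctly. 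Once associativity is in hand, the unit and Leibniz checks are comparatively short, and the proposition follows.
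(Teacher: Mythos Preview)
Your strategy---direct verification of unit, Leibniz, and associativity by expanding in $\aaa$---is exactly what the paper does (it writes out Leibniz and leaves associativity to the reader). But your sketch contains one incorrect auxiliary claim and misses the single identity that collapses the whole computation.

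The incorrect claim: multiplying by $1_B\in\aaa_1$ is \emph{not} the identity on $\aaa_{k-1}$; it raises the $\aaa$-degree by one. What is true, and what you actually need, is that $d\,1_B=1_A\in\aaa_0$ is the unit of $\aaa$.

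The missed identity: using Leibniz in $\aaa$, $d^2=0$, and $d\,1_B=1_{\aaa}$, one checks in one line that
\[
d(a*a')\;=\;da\cdot da'
\]
for all homogeneous $a,a'\in\bb$. Everything follows from this. For Leibniz (with $\bb$-degrees $k,k'$): $\wh{d}(a*a')=(k+k')\,da\cdot da'$, while $\wh{d}a*a'=k\,da\cdot da'$ and $a*\wh{d}a'=(-1)^{|a|+1}k'\,da\cdot da'$ (the other summands of $*$ die because $d^2=0$), and the signs combine correctly. For associativity: substituting $d(a*a')=da\cdot da'$ into the definition of $*$ reduces each of $(a*a')*a''$ and $a*(a'*a'')$ to five monomials in $a,a',a'',da,da',da'',1_B$, and they match term by term. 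There is no need for your operator $P(a)=da\cdot 1_B$ or the d\'ecalage reformulation; once you have $d(a*a')=da\cdot da'$ the ``delicacy'' you anticipate with the integer factors $k,\ell,k+\ell-1$ disappears.
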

\begin{proof}
 This is proved by direct computation. Let us prove for instance that $\wh{d}$ satisfies Leibniz rule, and leave to the reader the associativity of $*$. We take two homogeneous elements $b$ and $b'$ in $\bb$ of respective degrees $k$ and $k'$. Then
\begin{align*}
 \wh{d}_{k+k'}\he(b*b')&=(k+k')\,d_{k+k'+1}\he(b*b')=(k+k')\,db\,.\,db'\\
&=k \,(d_{k+1}\he b*b')+(-1)^{k}\be\, k'\,(b*d_{k'+1}\he b')=\wh{d}_{k}\he b*b'+(-1)^{|b|}\be\, b*\wh{d}_{k'}\he b'.
\end{align*}
\end{proof}
It follows from this result that all the $\aaa_{k}\he$'s are naturally $B \oti_{k}\he B^{\,\textrm{op}}$-modules. Besides, $A$ can be endowed with the structure of a $ B \oti_{k}\he B^{\,\textrm{op}}\be$-module, and there is a natural $B \oti_{k}\he B^{\,\textrm{op}}\be$-linear morphism $\apl{\pi }{\bb}{A}$ obtained via the composition $\smash{\xymatrix@C=17pt{\bb\ar[r]&\bb_{0}\he\simeq\aaa_{1}\ar[r]^-{\smash[b]{d_{1}\he} }&\aaa_{0}\simeq A.}} $
Besides, the diagram below
\[
\xymatrix@C=30pt@R=20pt{
\bb\oti_{B}\he\bb\ar[r]^-{*}\ar[d]_-{\pi \oti\pi }&\bb\ar[d]^-{\pi }\\
A\oti_{B}\he A\ar[r]&A
}
\]
is commutative, the bottom line being given by $\xymatrix@C=17pt{a_{1}\he\oti a_{2}\he\ar@{|->}[r]&a_{1}\he a_{2}\he.}$
\par\medskip
The situation is more comfortable in the commutative case, i.e.\ when $A$ is commutative, $I$ is a $A$-module (hence a $\smash{A\oti_{k}\he A^{\textrm{op}}\be} 	$-module), and $\aaa$ is graded-commutative. In that case, $\bb$ is also graded-commutative and all the $\aaa_{k}\he$'s are endowed with a $B$-module structure commuting with the differential $\wh{d}$.
\par\medskip
In the main example of application, which we describe now, we assume $A$ to be commutative, and we take for $\aaa$ the exterior algebra $\bwi{}{A}{B}$ endowed with the Koszul differential given by the $A$-linear form $\apl{\pr_{2}\he}{B}{A.}$ Thus, for any positive integer $k$,
\[
d_{k}(b_{1}\he\we\dots\we b_{k}\he)=\sum_{i=1}^{k}(-1)^{i-1}\pr_{2}\he(b_{i})\,\,
b_{1}\he\we\dots\we b_{i-1}\he\we b_{i+1}\he\we\dots\we b_{k}\he.
\]
Besides, via the isomorphism
\begin{align}
 \bwi{k}{\!A}{I}\oplus\bwi{k-1}{\!A}{I}\ &\xymatrix@C=17pt{\ar[r]^-{\sim}&}\ \bwi{k}{\!A}{B}\label{EqDeuxDgAlg}\\
(\ubi\,,\,\ubj)\hphantom{\bwi{k}{aaa}{\bb}} &\xymatrix@C=17pt{\ar@{|->}[r]&}\ \ubi+1_{B}\he\we \ubj \notag
\end{align}
the differential $\apl{d_{k}\he}{\bwi{k}{\!A}{B}}{\bwi{k-1}{\!A}{B}}$ is obtained as the composition
\[
\sutr{\bwi{k}{\!A}{B}}{\bwi{k-1}{\!A}{I}}{\bwi{k-1}{\!A}{B.}}
\]
Thus $\aaa$ (considered as a complex of $A$-modules) is exact. Via the isomorphism (\ref{EqDeuxDgAlg}),
the product $*$ has the following explicit form:
\begin{align}
 *\,:\,\,\bigl( \bwi{k+1}{\!A}{I}\oplus\bwi{k}{\!A}{I}\bigr)\oti_{k}\he\bigl( \bwi{l+1}{\!A}{I}\oplus\bwi{l}{\!A}{I}\bigr)&\xymatrix@C=17pt{\ar[r]&}\bwi{k+l+1}{\!A}{I}\oplus\bwi{k+l}{\!A}{I}\label{EqTroisDgAlg}\\
(\ubi_{\,1}\he\,,\,\,\ubj _{\,1}\he)\oti(\ubi _{\,2}\he\,,\,\,\ubj _{\,2}\he)\hphantom{iiiiiiiiii}&\xymatrix@C=17pt{\ar@{|->}[r]&}(\ubi _{\,1}\he\we\ubj _{\,2}\he+(-1)^{k}\be\ubj _{\,1}\he\we\ubi _{\,2}\he\,,\,\,\ubj _{\,1}\he\we\ubj _{\,2}\he).\notag
\end{align}
In particular, the $B$-module structure on $\bwi{k}{\!A}{B}$ is given by the formula
\begin{equation}\label{EqQuatreDgAlg}
 (i,a)*(\ubi _{\,1}\he,\ubj _{\,1}\he)=(a\,\ubi _{\,1}+i\we\ubj _{1}\he,\,a\ubj _{\,1}\he).
\end{equation}\
\section{Atiyah complexes (I)}\label{LocCx}
In this section we assume that $A$ is a commutative algebra with unit over a field $k$ of characteristic zero, and we adopt the notations of \S~\ref{DgAlg}, except that from now on we use the \textit{cohomological} grading for complexes, which means that all differentials are of degree $+1$.
\par \bigskip
We also introduce extra notation for derived functors. Let $R$ be a commutative $k$-algebra, $M$ be an $A$-module, and assume that $A$ is a quotient of $R$. We consider the following four functors, from $\textrm{Mod}(R)$ to $\textrm{Mod}(A)$ for the three first ones and from $\textrm{Mod}(R)^{\textrm{\,op}}\be$ to $\textrm{Mod}(A)$ for the last one:
\[
S\fl M\oti_{R}\he S,\quad S\fl S\oti_{R}M,\quad S\fl \textrm{Hom}_{R}\he(M,S),\quad S\fl \textrm{Hom}_{R}\he(S,M)
\]
The associated derived functors are denoted by
\[
S\fl M\,\lltensr{}_{\!R}\he\,S,\quad S\fl S\,\lltensl{}_{\!R}\he\,M,\quad S\fl \textrm{RHom}^{r}_{R}(M,S),\quad S\fl \textrm{RHom}^{\ell}_{R}(S,M)
\]
Of course, these functors can be defined for any $M$ in $D^{-}\be(A)$ for the three first ones and for any $M$ in $D^{+}(A)$ for the last one.
\par \medskip
There is a slightly subtle point behind these definitions: for any elements $M$, $N$ in $D^{-}\be(A)$, there is a canonical isomorphism between $M\,\lltensr{}_{\!R}\he\,N$ and $M\,\lltensl{}\he_{\!R}\,N$ in $D^{-}\be(R)$, but this isomorphism is not a priori induced by an isomorphism in $D^{-}\be(A)$.
The same thing happens with the isomorphism between $\textrm{RHom}^{r}_{R}(M,N)$ and $\textrm{RHom}^{\ell}_{R}(M,N)$ in $D^{+}\be(R)$.
\subsection{HKR isomorphisms for regular ideals}\label{LocHKR}
Let $I$ be a free $A$-module of finite rank $r$. The construction performed in \S~\ref{DgAlg} allows to make the following definition:
\begin{definition}\label{DefUnHKR}
 The \emph{Atiyah-Kashiwara} (AK) complex associated with $I$ is the complex of $B$-modules
\[
P\,:\,\,\xymatrix@C=35pt{0\ar[r]&\bwi{r+1}{\!A}{B}\ar[r]^-{rd_{r+1}\he}&\bwi{r}{\!A}{B}\ar[r]^-{(r-1)d_{r+1}\he}&\ \cdots\ \ar[r]^-{d_{2}\he}&B\ar[r]&0}
\]
where $B$ is in degree $0$.
\end{definition}
There is a quasi-isomorphism $\xymatrix@C=17pt{P\ar[r]^-{\sim}&A}$ in $\textrm{Mod}(B)$. As a complex of $A$-modules, $P$ splits as the direct sum of $A$ and of a null-homotopic complex.
\par\smallskip
Let us now take a commutative $k$-algebra $C$ with unit as well as a regular ideal $J$ in $C$ of length $r$.
If $(j_{1}\he,\dots,j_{r}\he)$ is a regular sequence defining $J$, then $J/J^{2}$ is a free $C/J$-module of rank $r$, a basis being given by the classes of the elements $j_1, \ldots, j_r$. Then, if we put $A=C/J$ and $I=J/J^{2}$, we see that $C/J^{2}$ is a $k$-extension of $A$ by $I$ via the Atiyah exact sequence
\begin{equation}\label{EqUnHKR}
\sutrgdpt{J/J^{2}\be}{C/J^{2}\be}{C/J}{.}
\end{equation}
If this exact sequence splits over $A$, the algebra $C/J^{2}$ is isomorphic (in a non-canonical way) to the trivial $k$-extension of $A$ by $I$, so that we can identify $C/J^{2}\be$ with $B=I\oplus A$ after the choice of a splitting of (\ref{EqUnHKR}).
\par \medskip
\begin{proposition}[HKR isomorphism, local case]\label{PropUnHKR}
Let $C$ be a commutative $k$-algebra with unit and $J$ be a regular ideal of $C$ such that the associated Atiyah sequence \emph{(\ref{EqUnHKR})} splits. If we choose an isomorphism between $C/J^{2}$ and $B$, the quasi-isomorphism $\smash{\xymatrix@C=17pt{P\ar[r]^-{\sim}&A}} $ in \emph{Mod}\,$(C)$ induces isomorphisms
\[
\xymatrix@C=25pt{A\,\lltensr{}_{C}\he A&A\,\lltensr{}_{C}\he P\ar[l]_-{\sim}\ar[r]^-{\sim}&A\oti_{C}\he P\simeq\bop_{i=0}^{r}\bwi{i}{\!A}{I}[i]}
\]
\vspace*{-3ex}
\[
\xymatrix{\ar[r]^-{\sim}
\emph{RHom}^{\ell}_{C}(A,A)&\emph{RHom}^{\ell}_{C} (P,A)&\emph{Hom}_{C}\he(P,A)\simeq\bop_{i=0}^{r}\bwi{i}{\!A}{I\ee\be}[-i]\ar[l]_-{\sim}
}
\]
in the bounded derived category $D^{\emph{b}}(A)$, where $I\ee\be=\emph{Hom}_{\!A}\he(I,A)$.
\end{proposition}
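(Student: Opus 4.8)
The plan is to deduce all four isomorphisms from one explicit chain map between the Atiyah--Kashiwara complex $P$ and a Koszul resolution of $A$ over $C$. Recall from the paragraph following Definition~\ref{DefUnHKR} that, once $C/J^{2}$ has been identified with $B$, there is a quasi-isomorphism $\pi\colon P\to A$ of complexes of $B$-modules, hence of complexes of $C$-modules, and that $\pi$ is surjective in degree $0$ (its component $P^{0}=B\to A$ being $\pr_{2}$). Since $A\lltensr{}_{C}(-)$ and $\textrm{RHom}^{\ell}_{C}(-,A)$ are genuine derived functors, invariant under quasi-isomorphisms of their resolved argument, $\pi$ immediately produces the two ``outer'' isomorphisms $A\lltensr{}_{C}A\xleftarrow{\ \sim\ }A\lltensr{}_{C}P$ and $\textrm{RHom}^{\ell}_{C}(A,A)\xrightarrow{\ \sim\ }\textrm{RHom}^{\ell}_{C}(P,A)$ in $D^{\textrm{b}}(A)$. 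All the content therefore lies in the comparison with the two underived objects $A\oti_{C}P$ and $\textrm{Hom}_{C}(P,A)$.

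First I would compute these two objects directly. Via the isomorphism~(\ref{EqDeuxDgAlg}) one has $P^{-k}=\bwi{k+1}{\!A}{B}\cong\bwi{k+1}{\!A}{I}\oplus\bwi{k}{\!A}{I}$; formula~(\ref{EqQuatreDgAlg}) shows that $J\cdot P^{-k}$ is exactly the first summand $\bwi{k+1}{\!A}{I}$, and the explicit description of $d_{k}$ recalled in \S~\ref{DgAlg} shows that the rescaled Koszul differential $\wh{d}_{k}=k\,d_{k+1}$ of $P$ also takes its values in that same first summand. Hence $A\oti_{C}P=P/JP$ has underlying graded module $\bop_{k=0}^{r}\bwi{k}{\!A}{I}$, with $\bwi{k}{\!A}{I}$ in degree $-k$, and \emph{zero} differential, so $A\oti_{C}P\simeq\bop_{i=0}^{r}\bwi{i}{\!A}{I}[i]$. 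Dually, every $C$-linear map $P^{-k}\to A$ annihilates $J\cdot P^{-k}$, hence factors through the quotient $\bwi{k}{\!A}{I}$, and the transposed differential vanishes for the same reason; therefore $\textrm{Hom}_{C}(P,A)\simeq\bop_{i=0}^{r}\bwi{i}{\!A}{I\ee\be}[-i]$, again with zero differential.

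The crux is to show that the canonical morphisms $A\lltensr{}_{C}P\to A\oti_{C}P$ and $\textrm{Hom}_{C}(P,A)\to\textrm{RHom}^{\ell}_{C}(P,A)$ are isomorphisms. For this, fix a regular sequence $j_{1},\dots,j_{r}$ defining $J$, write $\ba{\jmath}_{s}$ for the class of $j_{s}$ in $J/J^{2}=I\subset B$, and let $L=\bwi{}{\!C}{C^{r}}$ be the associated Koszul complex (\S~\ref{cup}): a bounded complex of free $C$-modules quasi-isomorphic to $A$ through its augmentation $\pi_{L}$. I would then introduce the $C$-linear chain map $f\colon L\to P$ given on the standard basis of $L^{-k}=\bwi{k}{\!C}{C^{r}}$ by
\[
f(e_{s_{1}}\we\cdots\we e_{s_{k}})=1_{B}\we\ba{\jmath}_{s_{1}}\we\cdots\we\ba{\jmath}_{s_{k}}\ \in\ \bwi{k+1}{\!A}{B}=P^{-k},
\]
where $1_{B}$ is the unit of $B$ viewed inside $\bwi{1}{\!A}{B}=B$. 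A routine computation --- using the Koszul formula for the differential of $L$, the normalisation $\wh{d}_{k}=k\,d_{k+1}$, and the product rule~(\ref{EqQuatreDgAlg}) --- shows that $f$ is a chain map and that $\pi_{L}=\pi\circ f$, so that $f$ is itself a quasi-isomorphism (both $L$ and $P$ resolve $A$ compatibly with their augmentations). Consequently $A\lltensr{}_{C}P$ is computed by $A\oti_{C}L=L/JL$, which (the Koszul differential becoming zero modulo $J$) is $\bop_{k=0}^{r}\bwi{k}{\!A}{A^{r}}[k]$ with zero differential, and the canonical morphism $A\lltensr{}_{C}P\to A\oti_{C}P$ is represented by $A\oti_{C}f$. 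In degree $-k$ the latter sends $(e_{s_{1}}\we\cdots\we e_{s_{k}})\bmod J$ to $\ba{\jmath}_{s_{1}}\we\cdots\we\ba{\jmath}_{s_{k}}$, i.e.\ it is the isomorphism $\bwi{k}{\!A}{A^{r}}\xrightarrow{\sim}\bwi{k}{\!A}{I}$ induced by the $A$-basis $\{\ba{\jmath}_{s}\}$ of $I$; hence $A\oti_{C}f$, and with it the canonical morphism, is an isomorphism, which gives the first chain. Applying $\textrm{Hom}_{C}(-,A)$ to $f$ identifies in the same way $\textrm{Hom}_{C}(P,A)\to\textrm{Hom}_{C}(L,A)=\textrm{RHom}^{\ell}_{C}(P,A)$ with the transpose of that isomorphism, which gives the second chain.

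The one point I expect to be genuinely delicate is the verification that $f$ is a chain map together with the bookkeeping showing that $A\oti_{C}f$ (resp.\ its transpose) is \emph{exactly} the basis-induced isomorphism and not merely some isomorphism: this is precisely where the normalisation $\wh{d}_{k}=k\,d_{k+1}$ of the Atiyah--Kashiwara differential intervenes, the scalar $k$ being produced on the Koszul side as a sum of $k$ equal terms. Everything else --- the $C$-module structure of the $\bwi{j}{\!A}{B}$, the equalities $J\cdot P^{-k}=\bwi{k+1}{\!A}{I}$, and the vanishing of the reduced differentials --- is formal from \S~\ref{DgAlg}; and since $f$ depends only on the chosen regular sequence and on the chosen splitting, the whole argument will carry over verbatim to the global setting.
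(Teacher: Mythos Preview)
Your proposal is correct and takes essentially the same approach as the paper: the chain map $f$ you define coincides with the paper's $\gamma$ (since the $C$-action on $P^{-k}$ is precisely $\bar{c}*(-)$ via~(\ref{EqQuatreDgAlg})), and the paper's verification that $\gamma$ is a chain map is exactly your ``scalar $k$ as a sum of $k$ equal terms'' computation. The paper also identifies $A\oti_{C}\gamma$ with the basis-induced isomorphism $e_{s}\mapsto\ba{\jmath}_{s}$ and its $A$-dual (up to sign), just as you do; your preliminary direct computation of $A\oti_{C}P$ and $\textrm{Hom}_{C}(P,A)$ is a helpful addition not spelled out in the paper but entirely consistent with it.
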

\begin{proof}
For any element $c$ in $C$, we denote by $\ba{c\he}$ the class of $c$ in $B$. We also denote by $(e_{1}\he,\dots,e_{r}\he)$ the canonical basis of $k^{r}\be$. If $(j_{1}\he,\dots,j_{r}\he)$ is a regular sequence defining the ideal $J$, the Koszul complex $L=(C\oti_{k}\he\bw{}{k^{r}\be},\delta )$ associated with $(j_{1}\he,\dots,j_{r}\he)$ is a free resolution of $A$ over $C$.
For any  nonnegative integer $p$, we define a map $\smash{\apl{\gamma _{-p}\he}{L_{-p}\he}{P_{-p}\he}} $ by the formula
\[
\gamma _{-p}\he(c\oti e_{l_{1}}\he\we\dots\we e_{l_{p}}\he)=\ba{c\he}*(1_{B}\he\we\ba{j\he}_{l_{1}}\he\we\dots\we \ba{j\he}_{l_{p}}\he).
\]
where the product $*$ is defined in \S\ \ref{DgAlg}.
The map $\gamma _{-p}\he$ is obviously $C$-linear, and if $p$ is positive,
\begin{align*}
 \wh{d}_{-p}\he\circ\gamma _{-p}\he(c\oti e_{l_{1}}\he\we\dots\we e_{l_{p}}\he)&=
 \wh{d}_{-p}\he(\,\ba{c\he}*1_{B}\he\we\ba{j\he}_{l_{1}}\he\we\dots\we \ba{j\he}_{l_{p}}\he)\\
&=\ba{c\he}*\wh{d}_{-p}\he(1_{B}\he\we\ba{j\he}_{l_{1}}\he\we\dots\we \ba{j\he}_{l_{-p}}\he)\\
&=p\,\ba{c\he}* (\, \ba{j\he}_{l_{1}}\he\we\dots\we \ba{j\he}_{l_{p}})\he
\intertext{and}\\[-6ex]
\gamma _{-(p-1)}\he\circ\delta _{-p}\he(c\oti e_{l_{1}}\he\we\dots\we e_{l_{p}}\he)&=\gamma _{-(p-1)}\he\Bigl(\, \sum_{i=1}^{p}(-1)^{i-1}\be c\,j_{l_{i}}\he\oti e_{l_{1}}\he\we\dots\we e_{l_{i-1}}\he\we e_{l_{i+1}}\he\we\dots\we e_{l_{p}}\he\Bigr)\\[-2ex]
&=\sum_{i=1}^{p}(-1)^{i-1}\be \,\ba{c\he}*\bigl[ \,\ba{j}_{l_{i}}\he
*(1_{B}\he\we\ba{j}_{l_{1}}\he\we\dots\we\ba{j}_{l_{i-1}}\he\we\ba{j}_{l_{i+1}}\he\we\dots\we\ba{j}_{l_{p}}\he)\bigr]\\[-1ex]
&=\sum_{i=1}^{p}(-1)^{i-1}\be \,\ba{c\he}*(\, \ba{j}_{l_{i}}\we\ba{j}_{l_{1}}\we\dots\we\ba{j}_{l_{i-1}}\we\ba{j}_{l_{i+1}}\we\dots\we\ba{j}_{l_{p}})\\
&=p\,\ba{c\he}*(\, \ba{j}_{l_{1}}\we\dots\we\ba{j}_{l_{p}}).
\end{align*}
Thus $\apl{\gamma }{L}{P}$ is a morphism of complexes. Hence we get two commutative diagrams
\[
\xymatrix@C=25pt@R=1pt{
&A\,\lltensr{}_{C}\he P\ar[dl]_-{\sim}\ar[r]&A\oti_{C}\he P\\
A\,\lltensr{}_{C}\he A&&\\
&A\,\lltensr{}_{C}\he L\ar[ul]^-{\sim}\ar[uu]_-{\id\oti\gamma }\ar[r]^-{\sim}&A\oti_{C}\he L\ar[uu]_-{\id\oti\gamma }
}\quad
\xymatrix@C=25pt@R=8pt{
&\textrm{RHom}^{\ell}_{C} (P,A)\ar[dd]^-{.\,\circ\,\gamma }_-{\sim}&\textrm{Hom}_{C}\he(P,A)\ar[l]\ar[dd]^-{.\,\circ\,\gamma }\\
\textrm{and\quad RHom}\smash{^{\ell}_{C}} (A,A)\ar[ur]^-{\sim}\ar[dr]_-{\sim}&&\\
&\textrm{RHom}^{\ell}_{C} (L,A)&\textrm{Hom}_{C}\he(L,A)\ar[l]_-{\sim}
}\hspace*{-5pt}
\]
\par\medskip
The vertical right arrow in the first diagram is the map from $A\oti_{k}\he\bw{}{k^{r}\be}$ to $\bwi{}{\!A}{I}$ obtained by mapping each vector $e_k$ to $\ba{j}_k$, hence is an isomorphism. The dual of this map over $A$ is precisely (up to sign) the vertical right arrow in the second diagram, so that it is an isomorphism too. This finishes the proof.
\end{proof}
We now recall Kashiwara's construction of the dual HKR isomorphism. For any free $A$-module $I$ of finite rank, we denote by $\theta _{I}\he$ its top exterior power.
\begin{definition}\label{DefDeuxHKR}
 If $I$ is a free module of rank $r$ and $P$ is the associated AK complex, the \emph{dual \emph{AK} complex} $Q$ is the complex of $B$-modules defined by $Q=\textrm{Hom}_{A}\he(P,\theta _{I}\he[r])$ with a specific sign convention: the differential of $Q$ is $(-1)^r$ times the differential of $\textrm{Hom}_{A}\he(P,\theta _{I}\he[r])$.
\end{definition}
To describe $Q$, notice that for every integer $p$ between $0$ and $r-1$, there is an isomorphism of $B$-modules
\begin{equation} \label{dualityfirst}
\bwi{r-p}{\!A}{B}\simeq \textrm{Hom}_{A}\he(\bwi{p+1}{\!A}{B},\theta _{I}\he)
\qquad \quad
(\ub{u}, \ub{v}) \flba \{\, (\ub{i}, \ub{j}) \flba \ub{j}\we \ub{u}+(-1)^p \, \ub{i} \we \ub{v} \,\}.
\end{equation}
Therefore the dual AK complex is isomorphic to
\[
Q\,:\,\xymatrix@C=25pt{0\ar[r]&\bwi{r}{\!A}{B}\ar[r]&\bwi{r-1}{\!A}{B}\ar[r]&\ \cdots\cdots\ \ar[r]&\bwi{2}{\!A}{B}\ar[r]&B\ar[r]&A\ar[r]&0}
\]
where $A$ is in degree zero, and the differential is $-(p+1)\, d_{r-p}\he$ on each $\bwi{r-p}{\!A}{B}$.
\par \medskip
We have a natural  quasi-isomorphism $\smash{\xymatrix@C=17pt{\thr\he\ar[r]^-{\sim}&Q.}}$ given by the map $d_{r+1}\he$. Besides, as a complex of $A$-modules, $Q$ splits as the direct sum of $\theta _{I}\he[r]$ and of a null-homotopic complex.
\par \medskip
The isomorphism (\ref{dualityfirst}) induces another one, namely:
\begin{equation}\label{dualitysecond}
\bwi{r-p}{\!A}{I}\simeq \textrm{Hom}_{C}\he (\bwi{p+1}{\!A}{B},\theta _{I}\he).
\end{equation}
\par \medskip
There is a natural product $\apl{\, \wh{*} \, }{P\oti_{B}\he Q}{Q}$ which is defined by the same formula as the product $\,*\,$:
\begin{equation}\label{EqOnzeBis}
 \xymatrix@C=40pt@R=2pt{
\wh{*}\,:\,\bwi{l+1}{A}{B}\oti_{B}\he\bwi{r-k}{A}{B}\ar[r]&\bwi{r-k+l}{A}{B}\hspace*{118pt}\\
\hspace*{10pt}(\ubi_{1}\he,\ubj_{1}\he)\oti(\ubi_{2}\he, \ubj_{2}\he)\ar@{|->}[r]& \be \, (\ubi_{1}\he\we\ubj_{2}\he+(-1)^{l}\be\ubj_{1}\he\we\ubi_{2}\he,\ubj_{1}
\he\we\ubj_{2}\he)
}
\end{equation}
A straightforward computation shows that $\wh{*}$ is indeed a morphism of complexes.
\begin{proposition}[Dual HKR isomorphism, local case]\label{PropDeuxHKR}
Under the hypotheses of Proposition \emph{\ref{PropUnHKR}}, the quasi-isomorphism $\smash{\xymatrix@C=17pt{\theta _{I}\he[r]\ar[r]^-{\sim}&Q}} $ induces isomorphisms in $D^{\emph{b}}\be (A)\!:$
\[
\xymatrix@C=30pt{
\emph{RHom}^{r}_{C}
(A,\thr)\ar[r]^-{\sim}&\emph{RHom}^{r}_{C}(A,Q)&\emph{Hom}_{C}(A,Q)\simeq\bop_{i=0}^{r}\bwi{i}{\!A}{I}[i]\ar[l]_-{\sim}}
\]
\end{proposition}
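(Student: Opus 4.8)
The plan is to transpose, almost word for word, the argument used in the proof of Proposition~\ref{PropUnHKR}. There are three points to establish: $(i)$ that $\theta _{I}[r]\xrightarrow{\sim}Q$ induces an isomorphism $\textrm{RHom}^{r}_{C}(A,\theta _{I}[r])\xrightarrow{\sim}\textrm{RHom}^{r}_{C}(A,Q)$ in $D^{\textrm{b}}(A)$; $(ii)$ that the canonical morphism $\textrm{Hom}_{C}(A,Q)\to\textrm{RHom}^{r}_{C}(A,Q)$ is an isomorphism; and $(iii)$ that $\textrm{Hom}_{C}(A,Q)$ is, as a complex, the direct sum $\bop_{i=0}^{r}\bwi{i}{\!A}{I}[i]$ with zero differential. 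Point $(i)$ is immediate: $\textrm{RHom}^{r}_{C}(A,-)$ is a derived functor and hence sends the quasi-isomorphism $\theta _{I}[r]\to Q$ to an isomorphism in $D(A)$; moreover $A$ has the length-$r$ Koszul free resolution $L$ over $C$, so both objects lie in $D^{\textrm{b}}(A)$.

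The crucial structural remark is that $P$ carries two commuting actions: the action of $A$ obtained from the chosen splitting $A\hookrightarrow B$ of the Atiyah sequence $(\ref{EqUnHKR})$, for which $P$ is a bounded complex of finitely generated free $A$-modules, and the action of $C$ obtained from $C\to B$. The tensor--hom adjunction relative to $P$ then reads $\textrm{Hom}_{C}(A,Q)=\textrm{Hom}_{C}\bigl(A,\textrm{Hom}_{A}(P,\theta _{I}[r])\bigr)\simeq\textrm{Hom}_{A}(A\oti_{C}P,\theta _{I}[r])$, where $A\oti_{C}P=P/JP$ is viewed as an $A$-module complex through the splitting action. By Proposition~\ref{PropUnHKR} one has $A\oti_{C}P\simeq\bop_{k=0}^{r}\bwi{k}{\!A}{I}[k]$ with zero differential (the differentials of $P$ are built from $\pr_{2}\colon B\to A$, which kills $I$, so they vanish modulo $J$). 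Applying $\textrm{Hom}_{A}(-,\theta _{I}[r])$ together with the duality isomorphism $\bwi{k}{\!A}{I\ee}\oti_{A}\theta _{I}\simeq\bwi{r-k}{\!A}{I}$ of \S\ref{cup} then gives point $(iii)$. (The sign twist in the definition of $Q$ multiplies every differential by $(-1)^{r}$ and is harmless throughout.)

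Point $(ii)$ is where Proposition~\ref{PropUnHKR} is used again. Using the Koszul resolution $\epsilon\colon L\xrightarrow{\sim}A$, a bounded complex of free $C$-modules, one obtains $\textrm{RHom}^{r}_{C}(A,Q)\simeq\textrm{Hom}_{C}(L,Q)=\textrm{Hom}_{C}\bigl(L,\textrm{Hom}_{A}(P,\theta _{I}[r])\bigr)\simeq\textrm{Hom}_{A}(L\oti_{C}P,\theta _{I}[r])$ by the same adjunction; and under these identifications the canonical morphism $\textrm{Hom}_{C}(A,Q)\to\textrm{RHom}^{r}_{C}(A,Q)$ becomes $\textrm{Hom}_{A}(\varphi,\theta _{I}[r])$, where $\varphi=\epsilon\oti\textrm{id}_{P}\colon L\oti_{C}P\to A\oti_{C}P$. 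But $\varphi$ is precisely the natural morphism $A\lltensr{}_{C}P\to A\oti_{C}P$ written in the Koszul model, hence a quasi-isomorphism by Proposition~\ref{PropUnHKR}; as its source and target are bounded complexes of finitely generated free (hence projective) $A$-modules, $\varphi$ is a homotopy equivalence, so $\textrm{Hom}_{A}(\varphi,\theta _{I}[r])$ is one as well. Thus the canonical morphism $\textrm{Hom}_{C}(A,Q)\to\textrm{RHom}^{r}_{C}(A,Q)$ is an isomorphism in $D^{\textrm{b}}(A)$, and combining $(i)$, $(ii)$ and $(iii)$ produces the asserted chain of isomorphisms.

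The main obstacle I expect is bookkeeping rather than conceptual. One must consistently distinguish the splitting $A$-action and the $C$-action on $P$ — this is exactly what makes $Q$ a nontrivial complex of $C$-modules — verify that each adjunction and duality isomorphism above is $A$-linear for the splitting action, and, above all, check that $\varphi$ is genuinely the comparison map of Proposition~\ref{PropUnHKR} and not some automorphism-twisted version of it, and that the whole identification transports the canonical morphism $\textrm{Hom}_{C}(A,Q)\to\textrm{RHom}^{r}_{C}(A,Q)$ and not another one. Alongside this, the sign conventions attached to $Q$, to the Koszul complex $L$, and to the various $\textrm{Hom}$-complexes have to be propagated correctly; this is the only genuinely delicate part.
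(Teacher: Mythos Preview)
Your proof is correct and is essentially the paper's own argument unwound: both hinge on the tensor--hom adjunction $\textrm{Hom}_{C}(-,Q)=\textrm{Hom}_{C}(-,\textrm{Hom}_{A}(P,\theta_{I}[r]))\simeq\textrm{Hom}_{A}(-\otimes_{C}P,\theta_{I}[r])$ and reduce the claim to Proposition~\ref{PropUnHKR}, the paper merely packaging this into a single commutative diagram in $D^{\textrm{b}}(C)$. The one place to tighten is your line ``$\textrm{RHom}^{r}_{C}(A,Q)\simeq\textrm{Hom}_{C}(L,Q)$'': resolving the \emph{first} argument computes $\textrm{RHom}^{\ell}$, not $\textrm{RHom}^{r}$, so this identification holds only in $D^{\textrm{b}}(C)$ --- but that is harmless here, since you are only checking that a fixed morphism in $D^{\textrm{b}}(A)$ is an isomorphism, and that is a cohomological (hence $D^{\textrm{b}}(C)$) statement, exactly the subtlety flagged at the start of \S\ref{LocCx}.
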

\begin{proof}	
Since $P$ is a complex of free $A$-modules, the natural map from $\textrm{Hom}_{A}\he(P,\thr)$ to $\textrm{RHom}_{A}\he(P,\thr)$ is an isomorphism. Let us consider the following commutative diagram in $D^{\textrm{b}}(C)$:
\par\smallskip
\[
{\xymatrix@C=25pt{\st
\textrm{RHom}^{\ell}_{C}(A, \,\thr)\ar[d]^-{\sim}\ar[r]^-{\sim}&\st\textrm{RHom}^{\ell}_{C}(P,\,\thr)\ar[d]^-{\sim}&\st \textrm{Hom}_{C}(P,\,\thr)\ar[l]_-{\phi _{1}\he}\ar[d]^-{\sim}\\
\st\textrm{RHom}_{A}(A\,\lltensr{}_{C}\he A,\,\thr)\ar[r]^-{\sim}&\st\textrm{RHom}_{A}(A\,\lltensr{}_{C}\he P,\,\thr)&\st \textrm{Hom}_{A}\he(A\,\oti_{C}\he P,\,\thr)\ar[l]_-{\phi _{2}\he}\\
\st\textrm{RHom}^{r}_{C}(A,\,\textrm{RHom}_{A}(A,\,\thr))\ar[u]_-{\sim}\ar[r]^-{\sim}&\st\textrm{RHom}^{r}_{C}(A,\,\textrm{RHom}_{A}(P,\,\thr))\ar[u]_-{\sim}&\st \textrm{Hom}_{C}\he(A,\,\textrm{Hom}_{A}(P,\,\thr))\ar[u]_-{\sim}\ar[l]\\
\st\textrm{RHom}^{r}_{C}(A,\,\thr)\ar[u]_-{\sim}\ar[r]^-{\sim}&\st\textrm{RHom}^{r}_{C}(A,\,Q)\ar[u]_-{\sim}&\st \textrm{Hom}_{C}(A,\,Q)\ar[u]_-{\sim} \ar[l]_-{\phi _{3}\he}}
}
\]
\par\bigskip
By Proposition \ref{PropUnHKR}, $\phi _{1}\he$ and $\phi _{2}\he$ are isomorphisms. This implies that $\phi _{3}\he$ is also an isomorphism.
\end{proof}
We provide now another proof of Proposition \ref{PropDeuxHKR}, which gives a more precise result:
\begin{proposition}\label{PropDeuxBisHKR}
Under the hypotheses of Proposition \emph{\ref{PropUnHKR}}, let $\phi $ be the morphism in $D^{\emph{b}}\be(C)$ obtained by the composition
\[
\xymatrix{
\bop_{i=1}^{r}\bwi{i}{A}I[i]\simeq\emph{Hom}_{C}\he(A,Q)\ar[r]&\emph{RHom}_{C}\he(A,\theta  _{I}\he[r])&\emph{Hom}_{C}\he(P,\theta  _{I}\he[r])\simeq\bop_{i=0}^{r}\bwi{i}{A}{I[i]}\ar[l]_-{\sim}
}
\]
where the last isomorphism is \emph{(\ref{dualitysecond})}. Then, as a morphism in $D^{\emph{b}}\be(k)$, $\phi $ acts by multiplication by the sign $(-1)^{\frac{(r-i)(r-i-1)}{2}}\be$ on each factor $\bwi{i}{A}{I[i]}$.
\end{proposition}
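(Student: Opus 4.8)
Both complexes occurring in the statement have vanishing differential: for $\textrm{Hom}_C(A,Q)$ this follows from the explicit differential $-(p+1)\,d_{r-p}$ of $Q$ together with the fact that $\textrm{Hom}_C(A,\Lambda^k_A B)$ is the $J$-torsion submodule $\Lambda^k_A I$ of $\Lambda^k_A B$, on which every Koszul map $d_k$ vanishes; the same argument applies to $\textrm{Hom}_C(P,\theta_I[r])$. Hence $\phi$, a priori only an isomorphism in $D^{\textrm{b}}(k)$, is induced by a genuine morphism of complexes, and by a unique one, since any homotopy between morphisms of zero-differential complexes vanishes identically. This morphism preserves the cohomological degree, so it restricts for each $i$ to an $A$-linear endomorphism of $\Lambda^i_A I$; being functorial in the free $A$-module $I$, it is $\mathrm{GL}_A(I)$-equivariant, hence a homothety $\lambda_i\cdot\mathrm{id}$. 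It therefore suffices to evaluate $\phi$ on the single element $\ba{j}_1\wedge\dots\wedge\ba{j}_i$ of $\Lambda^i_A I$, where $(j_1,\dots,j_r)$ is the regular sequence defining $J$, and to determine $\lambda_i\in k$.

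To make the three arrows composing $\phi$ explicit I would follow the pattern of the proof of Proposition \ref{PropUnHKR}: realize the middle term $\mathrm{RHom}^{r}_C(A,\theta_I[r])$ by $\textrm{Hom}_C(L,\theta_I[r])$, where $L$ is the Koszul resolution of $A$ over $C$ associated with $(j_1,\dots,j_r)$, and re-use the morphism $\gamma\colon L\to P$ built there. With this choice, the arrow $\textrm{Hom}_C(P,\theta_I[r])\to\mathrm{RHom}^{r}_C(A,\theta_I[r])$ becomes literal precomposition by $\gamma$, and the arrow $\textrm{Hom}_C(A,Q)\to\mathrm{RHom}^{r}_C(A,\theta_I[r])$ factors — exactly as in the large commutative diagram of the proof of Proposition \ref{PropDeuxHKR} — through $\textrm{Hom}_C(L,Q)$ and through the quasi-isomorphism $\textrm{Hom}_C(L,\theta_I[r])\to\textrm{Hom}_C(L,Q)$ induced by $\theta_I[r]\to Q$, which is invertible because $L$ is a bounded complex of free $C$-modules. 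The explicit computation then runs: start from $\ba{j}_1\wedge\dots\wedge\ba{j}_i\in\Lambda^i_A I\simeq\textrm{Hom}_C(A,Q)^{-i}$; push it to a cochain in $\textrm{Hom}_C(L,Q)^{-i}$; lift this cochain through the acyclic complex $Q$ to a cocycle in $\textrm{Hom}_C(L,\theta_I[r])^{-i}$ (a one-step descent, solved from the formulas for $d_{r+1}$, for the Koszul differential of $L$ and for $\gamma$); recognize the cocycle as $\psi\circ\gamma$ for an explicit $\psi\in\textrm{Hom}_C(P,\theta_I[r])^{-i}=\textrm{Hom}_C(\Lambda^{r-i+1}_A B,\theta_I)$; and finally apply the identification (\ref{dualitysecond}) to $\psi$ to land back in $\Lambda^i_A I$.

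The scalar $\lambda_i$ is then a product of a handful of elementary signs, and assembling them correctly is the only genuine difficulty. I expect the contributions to be: the twist by $(-1)^r$ built into the differential of $Q$ in Definition \ref{DefDeuxHKR}; the sign $(-1)^{p}$ appearing in the duality isomorphism (\ref{dualityfirst}), and hence in (\ref{dualitysecond}), specialized at $p=r-i$; the standard Koszul signs carried by the differential of the $\textrm{Hom}$-complex of a free resolution — the signs of (\ref{EqTroisPrelim}) and of the discussion of the four admissible contraction conventions in \S\ref{cup}; and, responsible for the binomial shape of the answer (recall that $(r-i)(r-i-1)/2=\binom{r-i}{2}$ is the signature of the order reversal of $r-i$ vectors in an exterior power), a reversal step forced when one matches the contraction implicit in the duality $Q\simeq\textrm{Hom}_A(P,\theta_I[r])$ against the contraction chosen in (\ref{dualitysecond}). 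The hard part will be purely this bookkeeping: one must fix, once and for all, the two identifications $\textrm{Hom}_C(A,Q)^{-i}\simeq\Lambda^i_A I$ and $\textrm{Hom}_C(P,\theta_I[r])^{-i}\simeq\Lambda^i_A I$ — the latter being the one induced by (\ref{dualitysecond}) and not another natural-looking one — and never depart from the conventions of \S\ref{cup}--\S\ref{DgAlg}. A good consistency check is the top degree $i=r$: there the quasi-isomorphism $\theta_I[r]\to Q$ is, via $d_{r+1}$, simply the inclusion $\theta_I\hookrightarrow\Lambda^r_A B$, so $\phi$ is visibly the identity, in agreement with $\binom{0}{2}=0$. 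Once set up, this is precisely the sign-tracking argument that will be invoked again in the proof of Theorem \ref{BaProCyClThUn}.
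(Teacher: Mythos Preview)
Your setup is sound and in fact matches the paper's: you correctly pass through $\textrm{Hom}_C(L,Q)$, use the Koszul resolution $L$ and the quasi-isomorphism $\gamma\colon L\to P$, and your preliminary reduction (zero differentials on both ends, hence $\phi$ is a genuine map of complexes, hence a scalar on each $\Lambda^i_AI$ by $\mathrm{GL}_A(I)$-equivariance) is a valid and pleasant simplification that the paper does not make explicit.

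The gap is the step you call ``a one-step descent''. It is not one step. The element coming from $\textrm{Hom}_C(A,Q)^{-i}$ lands in the component $\textrm{Hom}_C(L^0,Q^{-i})$ of $\textrm{Hom}_C(L,Q)^{-i}$, whereas the image of $\textrm{Hom}_C(L,\theta_I[r])$ sits in the component $\textrm{Hom}_C(L^{i-r},Q^{-r})$. Lifting one to the other means zigzagging across $r-i$ squares of the double complex $\textrm{Hom}_C(L^{\bullet},Q^{\bullet})$, accumulating one sign and one combinatorial factor at each step; this is precisely why the final answer is $(-1)^{\binom{r-i}{2}}$ rather than a single $\pm 1$. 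Your list of ``expected contributions'' does not account for this staircase, and the subsequent step ``recognize the cocycle as $\psi\circ\gamma$'' is not automatic either: the lifted cocycle is a map out of $L^{i-r}$, not out of $P$, and one must check it factors through $\gamma$.

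The paper sidesteps the explicit zigzag entirely. It identifies $\textrm{Hom}_C(L,Q)$ with $\theta_I^\ast[-r]\otimes_A N$ for a concrete double complex $N_{p,q}=\Lambda^p_AI\otimes_A\Lambda^q_AB$, observes that the two maps $\alpha_i$ and $\beta_i$ land in the extreme corners $N_{i,r}$ and $N_{r,i}$ of the antidiagonal, and then, instead of lifting, constructs an explicit projector $\pi_n$ on $\ker s_n$ whose kernel is exactly $\operatorname{im} s_{n+1}$. The scalar is then read off from the single value $\pi_n|_{N_{r,n-r}}$, which is a short computation with the maps $W_{p,q}$. If you want to complete your approach you will have to do the $(r-i)$-step staircase honestly; the projector trick is what makes the paper's argument finite and clean.
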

\begin{proof}
 Let $L$ be the Koszul complex associated with $(j_{1}\he,\dots, j_{r}\he)$ and $\apl{\gamma }{L}{P}$ be the quasi-isomorphism constructed in the proof of Proposition \ref{PropUnHKR}. We must describe the composition
\[
\xymatrix@R=3pt{
\textrm{Hom}_{C}\he(A,Q)\ar[r]&\textrm{Hom}_{C}\he(L,Q)&\textrm{Hom}_{C}\he(L,\theta  _{I}\he[r])\ar[l]_-{\sim}&\textrm{Hom}_{C}\he(P,\theta  _{I}\he[r])\ar[l]^-{\circ\,\gamma} _-{\sim}\\
}
\]
Let $M$ denote the free $B$-module $I\oti_{A}\he B$ and $\apl{\tau }{M}{B}$ be the \mbox{$B$-linear} form defined by the composition $\smash{\xymatrix@C=17pt{I\oti_{A}\he B\ar[r]&I\oti_{A}\he A=I\ar[r]&B.}} $ If we identify $M$ with $B^{r}\be$ via the basis $(\ba{j}_{1}\he,\dots, \ba{j}_{r}\he)$, the linear form $\tau $ is simply the composition
$\smash[b]{\xymatrix@C=40pt{B^{r}\be\ar[r]^-{(\ba{j}_{1}\he,\dots, \,\ba{j}_{r}\he)}&B.}} $ Thus, using the notation of \S~\ref{cup}, $L\oti_{C}\he B$ is isomorphic to the complex $L(M,\tau )$. We denote this latter complex by $(\ti{L},\delta )$.
\par \bigskip
Hence we get by (\ref{EqTroisPrelim}) the chain of isomorphisms
\[
\textrm{Hom}_{C}\he(L,Q)\simeq\textrm{Hom}_{B}\he(\ti{L},Q)\simeq Q \oti_{B}\he \ti{L}\ee\be \simeq Q \oti_{B}\he (\ti{L}, -\delta )\oti_{A}\he \theta  \ee_{I}[-r] \simeq \theta  \ee_{I}[-r] \oti_{A}\he (\ti{L}, -\delta )  \oti_{B}\he Q.
\]
\par \bigskip
Let $(N,s',s'')$ denote the double complex $(\ti{L}, -\delta )  \oti_{B}\he Q$ and let $s$ be the total differential. To avoid cumbersome notation, we use homological grading for $N$.
\par \medskip
Then, for $0\le p,q\le r$, we have (for the definition of $W_{p,q}\he$, see \S\ \ref{cup}):
\par\bigskip
--\quad $N_{p,\,q}\he=\bwi{p}{A}{M}\oti_{B}\he\bwi{q}{A}{B}\simeq\bwi{p}{A}{I}\oti_{A}\he\bwi{q}{A}{B}$
\par\smallskip
--\quad\!\!$
\xymatrix@C=10pt{s'_{p,\,q}\,:\,\bwi{p}{A}{I}\oti_{A}\he\bwi{q}{A}{B}\ar[r]&\bwi{p}{A}{I}\oti_{A}\he\bwi{q-1}{A}{I}\ar[rrrr]^-{-p\,W_{p-1,\,1}\he \oti \,\id}&&&&\bwi{p-1}{A}{I}\oti_{A}\he I\oti_{A}\he\bwi{q-1}{A}{I}}\\
\xymatrix@=40pt{&&&&\hspace*{30pt}\ar[r]^-{\id\oti\,\we}&\bwi{p-1}{A}{I}\oti_{A }\he\bwi{q}{A}{I}\ar[r]&\bwi{p-1}{A}{I}\oti_{A}\he\bwi{q}{A}{B }}
$
\par
--\quad $s''_{p,\,q}=\id\oti\, [-(r-q+1)\,d_{q}\he]$.
\par\bigskip
Besides, an easy verification yields that for $0\le i\le r$,
\par\medskip
--\quad The morphism \!\!$\xymatrix@C=17pt{\alpha _{i}\he :\bwi{i}{A}{I[i]}\ar[r]&\textrm{Hom}_{C}\he(P,\theta _{I}\he[r] ) \ar[r]&\textrm{Hom}_{C}\he(L,Q)\simeq\theta  \ee_{I}[-r]\oti_{A}\he N}$\! is given by the inclusion
\[
\xymatrix@C=30pt{\bwi{i}{A}{I} \ar[r]^-{(-1)^{r}}& \bwi{i}{A}{I}\simeq\theta  \ee_{I}\oti_{A}\he(\bwi{i}{A}{I}\oti_{A}\he\bwi{r}{A}{I})\ar[r]&\theta  \ee_{I}\oti_{A}\he(\bwi{i}{A}{I}\oti_{A}\he\bwi{r}{A}{B})=\theta  \ee_{I}\oti N_{i,\,r}\he}
\]
\par\smallskip
--\quad The morphism \!\!$\xymatrix@C=17pt{\beta _{i}\he:\bwi{i}{A}{I[i]}\ar[r]&\textrm{Hom}_{C}\he(A,Q)\ar[r]&\textrm{Hom}_{C}\he(L,Q)\simeq \theta  \ee_{I}[-r]\oti_{A}\he\! N}$\! is given by the inclusion
\[
\xymatrix@C=30pt{
\bwi{i}{A}{I} \ar[r]^-{(-1)^{r} \be}&\bwi{i}{A}{I}\simeq\theta  \ee_{I}\oti_{A}\he(\bwi{r}{A}{I}\oti_{A}\he\bwi{i}{A}{I})\ar[r]&\theta  \ee_{I}\oti_{A}\he(\bwi{r}{A}{B}\oti_{A}\he\bwi{i}{A}{I})=\theta  \ee_{I}\oti N_{r,\,i}\he
}
\]
\par\medskip
Let $R$ be the subcomplex of $N$ defined by $R_{p,\,q}\he=\bwi{p}{A}{I}\oti_{A}\he\bwi{q}{A}{I}\suq N_{p,\,q}\he.$
\par\medskip
\emph{Claim 1.} For $r\le n\le 2r$, $\ker s_{n}\he=R_{n}\he$.
\begin{proof}
For any $x$ in $\ker s_{n}\he$, let $x_{p,\,q}\he$ (with $n-r\le p,q\le r$ and $p+q=n$) denote the graded components of $x$. Then we have
\par\medskip
--\quad $s''_{r,\,n-r}(x_{r,\, n-r}\he)=0$
\par\smallskip
--\quad $s'_{i,\,n-i}(x_{i,\,n-i}\he)+(-1)^{i-1}\be s''_{i-1,\,n-i+1}(x_{i-1,\,n-i+1}\he)=0$
\par\smallskip
--\quad $s'_{n-r,\,r}(x_{n-r,\,r}\he)=0$
\par\medskip
Notice that for $p,q\ge 0$, $R_{p,\,q}\he\suq\ker s'_{p,\,q}$. Furthermore, if $q$ is positive, $R_{p,\,q}\he=\ker s''_{p,\,q}$. Thus, if $n>r$, $x_{r,\,n-r}\he$ belongs to $R_{r,\,n-r}\he$ and it follows that $x_{i-1,\,n-i+1}\he$ belongs to $R_{i-1,\,n-i+1}\he$ for $n-r+1\le i\le r$. If $n=r$, $N_{r,\,0}\he=R_{r,\,0}\he$ and $s'_{r,\,0}=s''_{r,\,0}=0$. Thus $x_{r,\,0}\he$ belongs to $R_{r,\,0}\he$ and $s''_{r-1,\,1}(x_{r-1,\,1}\he)=0$. Hence $x_{r-1,\,1}\he$ belongs to $R_{r-1,\,1}\he$ and we argue as in the case $n>r$. This proves the claim.
\end{proof}
For any integers $p$ and $q$ such that $0\le p,q\le r$ and $p+q\ge r$, let $\smash{\apl{\pi _{p,\,q}\he}{R_{p,\,q}\he}{R_{p+q-r,\,r}\he}} $ be defined by the composition
\[
\xymatrix@C=20pt{
\pi _{p,\,q}\he\,:\,\bwi{p}{A}{I}\oti_{A}\he\bwi{q}{A}{I}\ar[rrr]^-{W_{p+q-r,\,r-q}\he\,\oti\,\id}&&&\bwi{p+q-r}{A}{I}\oti_{A}\he\bwi{r-q}{A}{I}\oti_{A}\he\bwi{q}{A}{I}\ar[rr]^-{\id\,\oti\,\we}&&\bwi{p+q-r}{A}{I}\oti_{A}\he\bwi{r}{A}{I.}
}
\]
Then, for any integer $n$ such that $r\le n\le 2r$, we define a projector $\smash{\apl{\pi _{n}\he}{R_{n}\he}{R_{n-r,\,r}\he}} $ by the formula
\[
\pi _{n}\he=\sum_{p=n-r}^{r}\epsilon_{n,\,p}\he\,\binom{p}{n-r}\,\pi _{p,\,n-p}\he.
\]
\par \medskip
where $\epsilon_{n,p}\he=(-1)^{\frac{(p+1)(p+2)}{2}-\frac{(n-r+1)(n-r+2)}{2}}\be$.
\par\bigskip
\emph{Claim 2.} For $n\le r\le 2n$, $\ker \pi _{n}\he=\im s_{n+1}\he$.
\begin{proof}
We begin by proving the inclusion $\im s_{n+1}\he\suq \ker \pi _{n}\he$. The module $\im s_{n+1}\he$ is spanned by elements of the form $s'_{i,\,n+1-i}(x)+(-1)^{i}s''_{i,\,n+1-i}(x)$, with $n+1-r\le i\le r $ and $x$ in $N_{i,\,n+1-i}\he$.
If $y$ denotes the projection of $x$ on $\bwi{i}{A}{I}\oti_{A}\he\bwi{n-i}{A}{I}\vphantom{^{\ds)}}$, we have by (\ref{EqDeuxBisSecAlgExt}) the identity
\[
(\id\oti \,\we\,)\,(W_{n-r,\,r-n+i-1}\he\oti\id)\,(\id\oti \,\we \,)\,(W_{i-1,\,1}\oti\id)(y)=(\id\oti \,\we \,)\,(W_{n-r,\,r-n+i}\he\oti\id)(y).
\]
This implies that
\[\dfrac{1}{i}\,\pi _{i-1,\,n+1-i}\he\,(s'_{i, \, n+1-i}(x))=\dfrac{1}{r-n+i}\,\pi _{i,\,n-i}\he\,(s''_{i,\,n+1-i}(x)).
\]
Hence we get
\begin{align*}
 \pi _{n}\he[s'_{i,\,n+1-i}(x)+(-1)^{i}s''_{i,\,n+1-i}(x)]&=\epsilon_{n,\,i-1}\he\,\binom{i-1}{n-r}\,\pi _{i-1,\,n+1-i}\he(s'_{i,\,n+1-i}(x))\\
&\hspace*{-60pt}+(-1)^{i}\be\epsilon_{n,\,i} \he\,\binom{i}{n-r}\,\pi _{i,\,n-i}\he(s''_{i,\,n+1-i}(x))\\
&\hspace*{-130pt}=\dfrac{\pi _{i}\he(s''_{i,\,n+1-i}(x))}{r-n+i}\tim\left(i\,\epsilon_{n,\,i-1} \he\binom{i-1}{n-r}+(-1)^{i}\be \epsilon_{n,\,i}\he (r-n+i)\binom{i}{n-r}\!\right)=0.
\end{align*}
Since we know that $\alpha _{n-r}\he$ is a quasi-isomorphism in degree $-(n-r)$, by the first claim we have the equality
$R_{n-r,\,r}\he\oplus\im s_{n+1}\he=\ker s_{n}\he=R_{n}\he$. It follows that the kernel of the projector $\pi _{n}\he$ is exactly the image of $s_{n+1}\he$.
\end{proof}
\par\medskip
A quick computation shows that the map $\smash[t]{\apl{\pi _{r,\,n-r}\he}{R_{r,\,n-r}\he}{R_{n-r,\,r}\he}} $ is the multiplication by the constant $\smash{(-1)^{n(n-r)}\be \bigm/\binom{r}{n-r}}$. Thus $\pi _{n\,|\,R_{r,\,n-r}\he}\he=(-1)^{n(n-r)}\be \epsilon_{n,\,r}\tim\id$. It follows from the second claim that
\[
\im\bigl[\alpha _{n-r}\he- (-1)^{n(n-r)}\be \epsilon_{n,\,r} \,\beta _{n-r}\he\bigr]\suq \theta  \ee_{I}\oti\im s_{n+1}\he.
\] This yields the result.
\end{proof}

\subsection{The construction of Arinkin and C\u{a}ld\u{a}raru}\label{SectionArikinCalda}
Let $M$ be the free $B$-module $B\oti_{A}\he I$ and let $\tau$ be the $B$-linear form on $M$ obtained via the composition
$\smash{\sutr{B\oti_{A}\he I}{A\oti_{A}\he I=I}{B.}} $
\begin{definition}\label{DefUnAriKinCalda}
 The \emph{Arinkin--C\u{a}ld\u{a}raru complex} $(K,\nu )$ associated with the pair $(I, A)$ is the tensor algebra $K=\bigoplus\limits_{i\ge 0}\bigo{A}{i}M[i]$ endowed with a differential $\nu$ given for any positive integer $p$ by the formula
\[
\nu _{-p}\he(m_{1}\he\oti\dots\oti m_{p}\he)=\smash[t]{\dfrac{1}{p!}} \, \tau (m_{1}\he)\,m_{2}\he\oti\dots\oti m_{p}\he.
\]
\end{definition}
 Remark that $(K,\nu )$ is a free resolution of $A$ over $B$. Indeed, for any nonnegative integer $p$,
\begin{equation}\label{EqUnArikinCalda}
\xymatrix{\bigo{B}{\,p}M\simeq\bigo{A}{\,p+1}\!I\oplus \bigo{A}{\,p}I}
\end{equation}
and the map $p!\,\nu _{-p}\he$ is simply the composition $\sutr{\bigo{B}{\,p}M}{\bigo{A}{\,p}I}{\bigo{B}{\,p-1}\!M.}$
The $B$-module structure on $\bigo{B}{\,p}M$ is given via the isomorphism (\ref{EqUnArikinCalda}) by
\begin{equation}\label{EqDeuxArikinCalda}
 (a+i)\,.\,(\ubi ,\,\ubj)=(a\,\ubi+i\oti\ubj,\,a\,\ubj).
\end{equation}
Besides, there is a canonical sequence of $B$-modules
\begin{equation}\label{EqTroisArikinCalda}
 \sutrgdpt{\bigo{A}{\,p}I}{\bigo{B}{\,p}M}{\bigo{A}{\,p-1}I}{.}
\end{equation}
Let $\mathfrak{a}$ be the antisymmetrization map from $\bigo{A}{}I$ to $\bwi{}{\!A}{I}$ defined by (\ref{EqUnPrelim}). Then there exists a natural morphism \[\apl{\zeta _{-p}\he\, }{\bigo{B}{\,p}M}{\bwi{p+1}{\!A}{B}}\]
\par \medskip
given by
$\zeta _{-p}\he(\ubi,\ubj)=({\mathfrak{a}}_{\,p+1}\he(\ubi), {\mathfrak{a}}_{\,p}\he(\ubj))$. Thanks to (\ref{EqQuatreDgAlg}) and ($\ref{EqDeuxArikinCalda}$), $\zeta _{-p}\he$ is $B$-linear. Besides, if $P$ is the AK complex associated with $I$, then ${\apl{\zeta }{K}{P}} $ is a morphim of complexes which is a quasi-isomorphism and commutes to the quasi-morphisms $\smash{\xymatrix@C=17pt{K\ar[r]^-{\sim}&A}}$ and $\smash{\xymatrix@C=17pt{P\ar[r]^-{\sim}&A.}}$
\par\medskip
This construction allows to prove Arinkin--C\u{a}ld\u{a}raru's HKR theorem in the local case:
\begin{proposition}[{\cite{AC}}]\label{PropUnArinkinCalda}
 Under the hypotheses of Proposition \emph{\ref{PropUnHKR}}, the map obtained as the composition
\[
\xymatrix@C=40pt{A\,\lltensr{}_{C}\he\,A&A\,\lltensr{}_{C}\he\,K\ar[l]_-{\sim}\ar[r]&A\,\oti_{C}\he\,K \simeq\smash{\bigoplus\limits_{i\ge 0}}
\bigo{A}{i}I[i]\ar[r]^-{\smash[t]{\bop_{i=0}^{r} {\mathfrak{a}_i} \he} }&\smash[t]{\bop_{i=0}^{r}} \bwi{i}{\!A}{I[i]}}
\]
is an isomorphism in $D^{\emph{b}}(A)$.
\end{proposition}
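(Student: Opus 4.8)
The plan is to deduce the statement from Proposition \ref{PropUnHKR} by transporting everything along the quasi-isomorphism $\zeta : K \to P$ constructed just above, rather than running a fresh argument from a Koszul resolution of $A$ over $C$. Recall that $\zeta$ is a morphism of complexes of $B$-modules which is a quasi-isomorphism and is compatible with the augmentations $K \to A$ and $P \to A$.

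The first (and only computational) step is to understand the effect of $A \otimes_C (-)$ on the terms of $K$ and $P$ and on $\zeta$ itself. If $N$ is any $B$-module then $A \otimes_C N = N/IN$, since $A = C/J$ acts on $N$ through $B = C/J^{2}$ and the ideal $J$ acts through its image $I$. Taking $N = \bigo{B}{\,p}M$ and using the decomposition (\ref{EqUnArikinCalda}) together with the $B$-action (\ref{EqDeuxArikinCalda}), one finds that $IN$ is exactly the summand $\bigo{A}{\,p+1}I$, so that $A \otimes_C \bigo{B}{\,p}M \simeq \bigo{A}{\,p}I$; likewise, taking $N = \bwi{p+1}{\!A}{B}$ and using (\ref{EqDeuxDgAlg}) and (\ref{EqQuatreDgAlg}), one finds $IN = \bwi{p+1}{\!A}{I}$ and $A \otimes_C \bwi{p+1}{\!A}{B} \simeq \bwi{p}{\!A}{I}$ — and these are precisely the identifications used in the statement and in Proposition \ref{PropUnHKR}. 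Moreover the differential $\nu$ of $K$ has image in $IK$ (its formula factors through multiplication by $\tau$, which takes values in $I$) and the differential of $P$ is a scalar multiple of the Koszul differential $d$, which by \S\ref{DgAlg} has image in the summand annihilated by $I$; hence both $A \otimes_C K$ and $A \otimes_C P$ carry the zero differential. Finally, since $\zeta_{-p}(\underline{i},\underline{j}) = (\mathfrak{a}_{p+1}(\underline{i}), \mathfrak{a}_{p}(\underline{j}))$, the map it induces between the surviving summands $\bigo{A}{\,p}I$ and $\bwi{p}{\!A}{I}$ is precisely $\mathfrak{a}_{p}$; in other words, under the above identifications, $A \otimes_C \zeta$ is nothing but the map $\bigoplus_{i} \mathfrak{a}_{i}$ occurring as the last arrow of the statement.

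The second step is purely formal. Applying $A \lltensr{}_C (-)$ to the compatibility of $\zeta$ with the augmentations shows that the induced morphism $A \lltensr{}_C K \to A \lltensr{}_C P$ fits into a commutative triangle with the augmentation isomorphisms $A \lltensr{}_C K \xrightarrow{\sim} A \lltensr{}_C A$ and $A \lltensr{}_C P \xrightarrow{\sim} A \lltensr{}_C A$; and naturality of the comparison morphism $A \lltensr{}_C (-) \to A \otimes_C (-)$ produces a commutative square whose horizontal arrows are $A \lltensr{}_C K \to A \otimes_C K$ and $A \lltensr{}_C P \to A \otimes_C P$ and whose vertical arrows are induced by $\zeta$. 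Pasting these two diagrams together and feeding in the first step, the composition displayed in the statement is identified with
\[
A \lltensr{}_C A \xleftarrow{\ \sim\ } A \lltensr{}_C P \xrightarrow{\ \sim\ } A \otimes_C P \simeq \bop_{i=0}^{r} \bwi{i}{\!A}{I[i]} ,
\]
which is an isomorphism in $D^{\textrm{b}}(A)$ by Proposition \ref{PropUnHKR}.

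I do not anticipate a real obstacle: the only thing that needs computing is the identity $A \otimes_C \zeta = \bigoplus_{i} \mathfrak{a}_{i}$, and it is immediate once one observes that applying $A \otimes_C (-)$ simply deletes the summands annihilated by $I$. It should be stressed, however, that the middle arrow $A \lltensr{}_C K \to A \otimes_C K$ is \emph{not} a quasi-isomorphism: the complex $A \otimes_C K$ is unbounded below, whereas $A \lltensr{}_C A$ has cohomology only in degrees $-r, \dots, 0$. Thus the argument genuinely has to go through $P$, and it is only after composing with $\bigoplus_{i} \mathfrak{a}_{i}$ that $A \otimes_C K$ collapses back onto $A \lltensr{}_C A$.
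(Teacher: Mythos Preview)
Your argument is correct and is essentially the paper's own proof: both reduce to Proposition~\ref{PropUnHKR} via the quasi-isomorphism $\zeta\colon K\to P$, using the commutative diagram obtained from $\id\lltensr{}_{C}\zeta$ and $\id\otimes_{C}\zeta$ together with the identification $\id\otimes_{C}\zeta=\bigoplus_{i}\mathfrak{a}_{i}$. The paper simply records this diagram without spelling out the computation of $A\otimes_{C}(-)$ on each term, which you have made explicit.
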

\begin{proof}
 We prove that this morphism is exactly the HKR isomorphism appearing in Proposition \ref{PropUnHKR}. This is done by looking at the commutative diagram:
\[
\xymatrix@C=35pt@R=1pt{
&A\,\lltensr{}_{C}\he\,K\ar[dd]^-{\id\lltensr{}_{C}\he \, \zeta }\ar[dl]_-{\sim}\ar[r]&A\oti_{C}\he K\ar[dd]^-{\id\oti_{C}\, \zeta }\ar@<1ex>@{}[r]_-{\ds \simeq}&\bop_{i\ge 0}\bigo{A}{i}I[i]\ar[dd]^-{\bop_{i=0}^{r}{\mathfrak{a}_i} \he}\\
A\,\lltensr{}_{C}\he\,A&&\\
&A\,\lltensr{}_{C}\he\,P\ar[ul]^-{\sim}\ar[r]^-{\sim}&A\oti_{C} \he P\ar@<1ex>@{}[r]_-{\ds \simeq}&\bop_{i=0}^{r}\bwi{i}{\!A}I[i]
}
\]
\end{proof}
\subsection{Additional properties of local Atiyah complexes}\label{SousSecAKComplexes}
Let  $\Omega _{A/k}\he$ be the module of K\"{a}hler diffe\-ren\-tials of $A$ over $k$, and put $\om{i}=\bwi{i}{A}{}\,\Omega _{A/k}\he$ for $1\le i\le r$. An $A$-connection $\nabla$ on $I$
is a $k$-linear morphism $\smash{\apl{\nabla \, }{\, I}{\,\Omega _{A/k}\he\oti_{A}\he I}}$ satisfying Leibniz's rule $\nabla(a\,i)=a\nabla i+da\oti i$ for any $a$ in $A$ and any $i$ in $I$. In our setting, an $A$-connection
on $I$ is the same thing as the datum of a $k$-vector space of rank $r$ in $I$ (corresponding to the space of flat sections of $\nabla$).
\par \medskip
Recall that the automorphism group of $B$ in the category of $k$-extensions of $A$ by $I$ is the set $\textrm{Der}_{k}\he(A,I)$ of $k$-derivations of $A$ with values in $I$, which is isomorphic to $\smash[b]{\textrm{Hom}_{A}\he(\om{},I)} $. For such a derivation $\chi $, we denote by $u_{\chi }\he$ the associated isomorphism of $B$ given explicitly by the formula $u_{\chi }\he(i,a)=(i+\chi (a),a)$.
\begin{proposition}\label{PropTroisHKR}
 Let $\chi $ be an element of $\emph{Der}_{k}\he(A,I)$ and $\wh{\chi }$ be the associated morphism in $\emph{Hom}_{A}\he(\om{}, I)$. Then:
 \medskip
\begin{enumerate}
 \item [(1)] Every $A$-connection on $I$ induces a $u_{\chi }\he$-linear isomorphism of the AK-complex $P$ (resp.\ of the dual AK complex $Q$) commuting with the quasi-isomorphism $\smash[t]{\xymatrix@C=17pt{P\ar[r]^-{\sim}&A}} $ (resp.\ $\smash[t]{\xymatrix@C=17pt{\theta_{I}\he[r] \ar[r]^-{\sim}&Q\!\!}} $).
\par \smallskip
\item[(2)] If $\smash[b]{\apl{\, \wh{\chi \,}}{\, \om{}}{I}}$ is an isomorphism, there exists a canonical $u_{\chi }\he$-linear isomorphism of $P$ (resp.\ $Q$) commuting with the quasi-isomorphism $\smash{\xymatrix@C=17pt{P\ar[r]^-{\sim}&A}}$  (resp.\ $\smash{\xymatrix@C=17pt{\theta_{I}\he[r] \ar[r]^-{\sim}&Q\!\!}}
$).
\end{enumerate}
\end{proposition}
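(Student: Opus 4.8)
The strategy for both statements is the same: build an explicit $u_\chi$-linear chain automorphism of $P$ term by term, using the fact that $P_{-p}\simeq \bwi{p+1}{\!A}{I}\oplus\bwi{p}{\!A}{I}$ and that a $u_\chi$-linear endomorphism must, on the $A$-part, respect the $B$-module structure twisted by $\chi$. Recall that the $B$-action on $\bwi{p+1}{\!A}{B}$ (formula (\ref{EqQuatreDgAlg})) reads $(i,a)*(\ub{i}_1,\ub{j}_1)=(a\,\ub{i}_1+i\wedge\ub{j}_1,\,a\,\ub{j}_1)$. So a candidate automorphism $\Phi_{-p}$ should have block-triangular form $\Phi_{-p}(\ub{i},\ub{j})=(\ub{i}+\lambda_p(\ub{j}),\,\ub{j})$ where $\lambda_p\colon\bwi{p}{\!A}{I}\to\bwi{p+1}{\!A}{I}$ is to be chosen so that (a) $\Phi$ is $u_\chi$-linear, (b) $\Phi$ commutes with $\wh d$, i.e. with the maps $(r-p)\,d_{r+1}$, and (c) $\Phi$ covers the identity of $A$ under $P\xrightarrow{\sim}A$ (automatic from the triangular shape, since the quasi-isomorphism factors through $d_1$ on $P_0=B$ and $\Phi_0=u_\chi$). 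The computation of what $u_\chi$-linearity forces is short: writing $u_\chi(i,a)=(i+\chi(a),a)$, compatibility of $\Phi_{-p}$ with multiplication by $(0,a)$ and by $(i,0)$ pins down $\lambda_p$ up to an $A$-linear ``constant'' that must be compatible with $d$, and one checks the Leibniz-type identity $\lambda_{p}\circ d - d\circ\lambda_{p+1}$ is governed by $\wh\chi\in\mathrm{Hom}_A(\om{},I)$.

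\textbf{Part (1).} Given an $A$-connection $\nabla$ on $I$, equivalently a rank-$r$ $k$-subspace $V\subset I$ of flat sections, I would use $V$ to split off, inside each $\bwi{p}{\!A}{I}$, the ``constant'' part generated by $\bwi{p}{k}{V}$, and define $\lambda_p$ on decomposables $a\,v_1\wedge\cdots\wedge v_p$ (with $v_i\in V$) by inserting the element of $I$ corresponding to $da$ via the connection, i.e. essentially $a\,v_1\wedge\cdots\wedge v_p\mapsto \widehat\chi(da)\wedge v_1\wedge\cdots\wedge v_p$ extended suitably, where $\widehat\chi(da)\in I$ makes sense because $\chi\in\mathrm{Der}_k(A,I)$; the connection is exactly what lets one write elements of $\bwi{p}{\!A}{I}$ as $A$-combinations of flat wedges and extend $\lambda_p$ $A$-linearly in a well-defined way. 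Then one verifies (c) is automatic and checks commutation with each $d_{r+1}$: this reduces, by Lemma \ref{LemUnBisSecAlgExt} and the explicit form of $d_k$ on $\bwi{k}{\!A}{B}$, to the single identity that $\chi$ is a derivation, i.e. $\chi(ab)=a\chi(b)+b\chi(a)$. The top term $\bwi{r+1}{\!A}{B}\simeq\bwi{r+1}{\!A}{I}\oplus\bwi{r}{\!A}{I}=\bwi{r}{\!A}{I}$ (since $I$ has rank $r$) needs a separate but easy check. For the dual complex $Q=\mathrm{Hom}_A(P,\theta_I[r])$ one simply transports $\Phi$ by duality; $u_\chi$-linearity is preserved because $u_\chi$ acts trivially on $\theta_I=\det I$ up to the identity (the derivation part lands in $\bwi{r+1}{\!A}{I}=0$), and compatibility with $\thr\xrightarrow{\sim}Q$ follows by dualizing (c).

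\textbf{Part (2).} When $\wh\chi\colon\om{}\to I$ is an isomorphism there is no need to choose a connection: $\wh\chi^{-1}$ transports the tautological structure. Concretely, $\wh\chi^{-1}$ identifies $I$ with $\om{}=\Omega^1_{A/k}$, hence identifies $B=I\oplus A$ with the algebra $\om{}\oplus A$, i.e. with (a quotient of) the ring of principal parts / the trivial square-zero extension by differentials; on that model there is a canonical map, namely the universal derivation $d\colon A\to\om{}$ itself, playing the role of $\chi$, and the canonical automorphism of $P$ is the one induced by $\mathrm{id}$ on this canonical model — equivalently, take $\lambda_p$ to be the unique $A$-linear map $\bwi{p}{\!A}{I}\to\bwi{p+1}{\!A}{I}$ which on the sub-$k$-space $\bwi{p}{k}{(\wh\chi^{-1}\,dA)}$ agrees with exterior multiplication by the appropriate differential; the point is that $\wh\chi^{-1}(dA)$ spans a canonical ``flat'' $k$-subspace of $I$ (no choice involved, since $\wh\chi^{-1}$ is given), so Part (1)'s construction applies with a canonical connection and yields a canonical $\Phi$. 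One then records that this $\Phi$ is $u_\chi$-linear and commutes with $P\xrightarrow{\sim}A$ exactly as before, and dualizes for $Q$.

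\textbf{Main obstacle.} The delicate point is \emph{well-definedness} of $\lambda_p$ in Part (1): the formula is written on flat decomposables, and one must check that the $A$-linear extension is consistent, i.e. independent of how an element of $\bwi{p}{\!A}{I}$ is expressed, which is where one genuinely uses that flat sections form an $A$-basis locally and that $\chi$ is a $k$-derivation (so that the $da$-correction terms telescope correctly). The accompanying bookkeeping — matching the signs $(r-p)$ on the differentials, keeping straight the two-component description (\ref{EqDeuxDgAlg}) of $\bwi{k}{\!A}{B}$, and handling the extremal terms $p=0$ and $p=r$ — is routine but must be done carefully; everything else is formal once $\lambda_p$ is shown to be a well-defined chain homotopy-type datum compatible with $u_\chi$.
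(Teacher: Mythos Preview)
Your overall strategy---a block-triangular automorphism $\Phi_{-p}(\ubi,\ubj)=(\ubi+\lambda_p(\ubj),\ubj)$ with $\lambda_p:\bwi{p}{\!A}{I}\to\bwi{p+1}{\!A}{I}$---is exactly the paper's approach. But several details are off, and Part~(2) has a real gap.

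\textbf{First, $\lambda_p$ is not $A$-linear.} The $u_\chi$-linearity condition you wrote down forces
\[
\lambda_p(a\,\ubj)=a\,\lambda_p(\ubj)+\chi(a)\wedge\ubj,
\]
a Leibniz rule, not $A$-linearity. So ``extend $\lambda_p$ $A$-linearly'' cannot be what you mean. The paper sidesteps your well-definedness worry entirely: it sets
\[
R_p:\bwi{p}{\!A}{I}\xrightarrow{\ \Lambda^p\nabla\ }\om{}\otimes_A\bwi{p}{\!A}{I}\xrightarrow{\ \wh\chi\,\otimes\,\id\ }I\otimes_A\bwi{p}{\!A}{I}\xrightarrow{\ \wedge\ }\bwi{p+1}{\!A}{I},
\]
which is a composition of globally defined maps, hence automatically well-defined; the Leibniz identity above falls out of the Leibniz rule for $\Lambda^p\nabla$. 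No appeal to flat decomposables or to Lemma~\ref{LemUnBisSecAlgExt} is needed.

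\textbf{Second, commutation with the differential is free.} Since $d_{k}(\ubi,\ubj)=(\ubj,0)$ in the two-component description, and your $\Phi$ fixes the second component and sends $(\ubj,0)\mapsto(\ubj,0)$, the chain-map condition is immediate. The ``tedious bookkeeping'' you anticipate is not there.

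\textbf{Third, and most importantly, Part~(2) does not reduce to Part~(1).} The set $\wh\chi^{-1}(dA)=\chi(A)\subset I$ is a $k$-subspace, but it is not of rank $r$ in general (think $A=k[x]$, $I=A$: then $\chi(A)$ is infinite-dimensional over $k$), so it does \emph{not} define a connection. There is no canonical connection on $I$ just because $\wh\chi$ is an isomorphism. The paper's construction is genuinely different: it transports the de~Rham differential,
\[
R_p=\bigl(\Lambda^{p+1}\wh\chi\bigr)\circ\mathfrak{d}_p\circ\bigl(\Lambda^{p}\wh\chi\bigr)^{-1},
\]
where $\mathfrak{d}_p:\om{p}\to\om{p+1}$ is exterior differentiation. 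This $R_p$ is only $k$-linear and satisfies the same Leibniz identity as in Part~(1), so the same $\Phi_{-p}(\ubi,\ubj)=(\ubi+R_p(\ubj),\ubj)$ works; but $R_p$ does not arise from any connection. Your sentence ``there is a canonical map, namely the universal derivation $d$'' is pointing at the right object, but your attempt to reinterpret it as ``a canonical connection'' and an ``$A$-linear $\lambda_p$'' is incorrect.
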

\begin{proof}
 (1) For any positive integer $p$, an $A$-connection $\nabla$ on $I$ induces an $A$-connection $\bw{p}{}{\,\nabla}$ on $\bwi{p}{\!A}{I}$. Let $\apl{R_{p}\he}{\bwi{p}{\!A}{I}}{\bwi{p+1}{\!A}{I}}^{\vphantom{\ds)}}$ be defined as the composition
\[
\xymatrix@C=30pt{\bwi{p}{\!A}{I}\ar[r]^-{\bwi{p} {\vphantom{A}}{\,\nabla}}&\om{}\oti_{A}\he\bwi{p}{\!A}{I}\ar[r]^-{\chi\, \oti\,\id}&I\oti_{A}\he\bwi{p}{\!A}{I}\ar[r]^-{\wedge{}{}}&\bwi{p+1}{\!A}{I.}}
\]
Using the isomorphism (\ref{EqDeuxDgAlg}), we define $\apl{\varphi _{-p}\he}{\bwi{p+1}{\!A}{B}}{\bwi{p+1}{\!A}{B}}$ by $\varphi _{-p}\he(\ubi ,\ubj  )=(\ubi +R_{p}\he(\ubj ),\ubj ).$
\par \smallskip
Then, using (\ref{EqQuatreDgAlg}), we obtain that for any $(i,a)$ in $B$,
\begin{align*}
 \varphi _{-p}\he
[(i,a)*(\ubi ,\ubj )]&=\varphi _{-p}\he(a\ubi +i\wedge\ubj ,a\ubj )\\
&=(a\ubi +i\wedge\ubj +R_{p}\he(a\ubj ),a\ubj )\\
&=(a\ubi +i\wedge\ubj +aR_{p}\he(\ubj )+\chi (a)\wedge \ubj, a\ubj )\\
&=\bigl[a (\ubi +R_{p}\he(\ubj ))+(i+\chi (a))\wedge \ubj, a\ubj \bigr]\\
&=u_{\chi }\he(i,a)*(\ubi +R_{p}\he(\ubj ),\ubj )\\
&=u_{\chi }\he(i,a)*\varphi _{-p}\he(\ubi ,\ubj ).
\end{align*}
If we take for $\apl{\varphi _{0}\he}{B}{B}$ the isomorphism $u_{\chi }\he$, which is of course $u_{\chi }\he $-linear, the $\varphi _{-p}\he$'s define the required automorphism of $P$.
\par\medskip
(2) For any positive integer $p$, the map $\wh{\chi }$ induces an isomorphism $\smash{\apl{\bw{p}{}\, \wh{\chi }\, }{\om{p}}{\, \bwi{p}{\!A}{I.}}}$ Then we define ${\apl{R_{p}\he}{\bwi{p}{\!A}{I}}{\bwi{p+1}{\!A}{I}}} $ by ${R_{p}\he =\bwi{p+1}{}{}\, \wh{\chi }\circ \mathfrak{d}_{p}\he\circ(\bwi{p}{}{}\, \wh{\chi }\, )^{-1}}\be$, where ${\apl{\mathfrak{d}_{p}\he}{\om{p}}{\om{p+1}}}$ is the exterior differential. For $a$ in $A$ and $\ubj $ in $\bwi{p}{\!A}{I}$, we have
\begin{align*}
 R_{p}\he(a\ubj )&=\bw{p+1}{}{}\, \wh{\chi }\,\bigl[\, a\mathfrak{d}_{p}\he\bigl( (\bwi{p}{}{}\, \wh{\chi }\, )^{-1}\be(\ubj )\bigr)+da\wedge(\bw{p}{}{}\, \wh{\chi }\, )^{-1}\be(\ubj )\bigr]\\
&=aR_{p}\he(\ubj )+\wh{\chi }(da)\wedge\ubj\\
&=aR_{p}\he(\ubj )+\chi (a)
\wedge\ubj\!.
\end{align*}
Then we argue exactly as in (i).
\end{proof}
This proposition implies as a corollary that the two local HKR isomorphisms of Proposition \ref{PropUnHKR} and Proposition \ref{PropDeuxHKR} are in fact independent of the splitting of the Atiyah sequence (\ref{EqUnHKR}), since two different splittings yield isomorphic extensions.
\section{Atiyah complexes (II)}\label{Atiyah Global}
In this section, we fix two connected analytic manifolds $X$ and $Y$ such that $X$ is a proper closed complex submanifold of $Y$\!. We introduce some notation which will be used extensively in the sequel: $r$ is the codimension of $X$ in $Y$\!, $\smash{\apl{j}{X}{Y}}$ is the canonical inclusion, $\smash[t]{\bax}$ is the first formal \nbh\ of $X$ in $Y$\!, $\smash{\apl{\ba{j}}{X}{\bax }}$ is the associated inclusion and $\bb$ is the trivial $\C_{X}\he$-extension of ${\oox}^{\vphantom{stupide}}$ by $N\ee_{X/Y}$. Remark that by the adjunction formula, $\smash{\textrm{det} \, N_{X/Y}\he[-r]}$ is isomorphic to the relative dualizing complex $\smash{\omega_{X/Y}\he}$.
Although $\omega_{X/Y} \he $ is an object of $D^{\textrm{b}}\be(X)$, we will always consider it as the object $\det N_{X/Y}\he[-r]$ in the category of complexes of sheaves of $\oox$-modules.
The Atiyah sequence associated with the pair $(X,Y)$ is the exact sequence
\begin{equation}\label{EqUnAnalyticHKR}
 \sutrgd{N\ee_{X/Y}}{\oo\baxx}{\oox }
\end{equation}
in $\textrm{Mod}(\oo\baxx)$, which is a sheafified version of (\ref{EqUnHKR}).
\par\medskip
\begin{definition}
A \emph{quantized analytic cycle} in a complex manifold $Y$ is a couple $(X, \sigma)$ such that:
\begin{enumerate}
\item[--] $X$ is a closed complex submanifold of $Y.$
\item[--] $\sigma$ is a holomorphic retraction of $\ba{j}.$
\end{enumerate}
\end{definition}
\par \smallskip
If $(X, \sigma)$ is a quantized analytic cycle, then the Atiyah sequence (\ref{EqUnAnalyticHKR}) is automatically split over $\oox$.
\subsection{Analytic HKR isomorphisms}\label{anhkr}
The constructions of \S~\ref{SectAlgExt} can be sheafified  in an obvious manner. Thus, for every positive integer $p$, $\bwi{p}{\oox}{\bb}_{\vphantom{\ds)}}$ is naturally a sheaf of $\bb$-modules on $X$. We get in this way two AK complexes $\pp$ and $\qq$ which are complexes of $\bb$-modules.
\par\medskip
If $\apl{\sigma }{\bax}{X}$ is the retraction of $\ba{j}$ obtained from a splitting of (\ref{EqUnAnalyticHKR}), then $\sigma $ induces an isomorphism $\smash{\aplexp{\psi _{\sigma }}{\bb}{\oo\baxx}{\sim}}$of $\C_{X}\he$-algebras.
\begin{definition}\label{DefUnAnalyticHKR}
 For any positive integer $p$, we put $\bwi{p}{\sigma \he}{\,\oo\baxx}=\psi \ee_{\sigma }\bigl( \bwi{p}{\oox }{\bb}\bigr)$ in $\textrm{Mod}(\oo\baxx)$, and we define the AK complexes $\pp_{\sigma }\he$ and $\qq_{\sigma }\he$ by $\pp_{\sigma }\he=\psi \ee_{\sigma }\pp$ and $\qq_{\sigma }\he=\psi \ee_{\sigma }\qq \vphantom{\Bigl)}$. They are both complexes of $\oo\baxx$ -modules.
\end{definition}
The results of \S~\ref{SousSecAKComplexes} can be extended in our setting.
\begin{proposition}[HKR isomorphism, global case]\label{PropUnAnalyticHKR}
Let $(X, \sigma)$ be a quantized analytic cycle of codimension $r$ in a complex manifold $Y$\!. Then for any locally free sheaves $\mathcal{E}$ and $\eee'$ on $X$, the quasi-isomorphism $\smash{\xymatrix@C=17pt{\pp_{\sigma }\he \oti_{\oox} \he \mathcal{E}' \ar[r]^-{\sim}& \mathcal{E}'}}$ in $\emph{Mod}\,(\oo\baxx)$ induces isomorphisms in $D^{\emph{b}}\be(\oox)$:
\begin{align*}
 &\xymatrix@C=15pt{\Gamma _{\sigma }\he\,:\,\eee \lltensr{}_{\ooy }\he \mathcal{E}' &\eee \lltensr{}_{\ooy }\he (\pp_{\sigma }\he \oti_{\oox} \he \mathcal{E}')  \ar[l]_-{\sim}\ar[r]^-{\sim}&\eee \oti_{\oo_{Y}}\he (\pp_{\sigma }\he \oti_{\oox} \he \mathcal{E}')\,\simeq\,\bop_{i=0}^{r}\eee\, \oti \,\mathcal{E}' \,\oti\, \bwi{i}{}{N\ee_{X/Y}}\,[i]}\\
&\xymatrix@C=15pt@R=0pt{{\Gamma }\ee_{\sigma }\,:\,\rhl{\ooy }
(\eee ,\,\eee' )\ar[r]^-{\sim}&\rhl{\ooy } (\pp_{\sigma }\he \oti_{\oox}\he {\eee},\,\eee' )&\hoo_{\ooy }(\pp_{\sigma }\he \oti_{\oox}\he {\eee},\,\eee' ) \ar[l]_-{\sim}\\
&&\simeq\, \bop_{i=0}^{r} \hoo (\eee,\, \eee') \, \oti \bwi{i}{}{N_{X/Y}\he} \, [-i].}
\end{align*}
\end{proposition}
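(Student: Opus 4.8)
The plan is to derive both isomorphisms from the local statement of Proposition~\ref{PropUnHKR} by working locally on $X$, and to carry out the identification of the underived objects globally by means of the explicit formulas of \S~\ref{DgAlg}. First I would record the two elementary quasi-isomorphisms: $\pp_{\sigma }$ is a bounded complex of sheaves of $\oo\baxx$-modules whose terms $\bwi{p}{\sigma }{\oo\baxx}$ are locally free over $\oox$, and the augmentation $\pp_{\sigma }\longrightarrow\oox$ is a quasi-isomorphism (this is Definition~\ref{DefUnHKR} and the remarks following it, sheafified). Hence, for locally free $\oox$-modules $\eee$ and $\eee'$, both $\pp_{\sigma }\oti_{\oox}\eee'\longrightarrow\eee'$ and $\pp_{\sigma }\oti_{\oox}\eee\longrightarrow\eee$ are quasi-isomorphisms; since $\eee\lltensr{}_{\ooy }(-)$ and $\rhl{\ooy }(-,\eee')$ are functors on derived categories and a quasi-isomorphism becomes an isomorphism there, this already produces the leftmost isomorphisms $\eee\lltensr{}_{\ooy }(\pp_{\sigma }\oti_{\oox}\eee')\simeq\eee\lltensr{}_{\ooy }\eee'$ and $\rhl{\ooy }(\eee,\eee')\simeq\rhl{\ooy }(\pp_{\sigma }\oti_{\oox}\eee,\eee')$ in $D^{\textrm{b}}(\oox)$.

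The crux is to show that the canonical comparison maps $\eee\lltensr{}_{\ooy }(\pp_{\sigma }\oti_{\oox}\eee')\longrightarrow\eee\oti_{\ooy }(\pp_{\sigma }\oti_{\oox}\eee')$ and $\hoo_{\ooy }(\pp_{\sigma }\oti_{\oox}\eee,\eee')\longrightarrow\rhl{\ooy }(\pp_{\sigma }\oti_{\oox}\eee,\eee')$ are isomorphisms in $D^{\textrm{b}}(\oox)$ --- in other words, that $\pp_{\sigma }$, although built out of torsion sheaves (hence far from $\ooy$-flat), nevertheless computes the derived tensor product and the derived sheaf-$\hoo$ over $\ooy$. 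Being an isomorphism in a derived category is a local condition on $X$, so I would restrict to a small open subset $U$ of $Y$ over which $\mathcal{I}_{X}$ is generated by a regular sequence $(j_{1},\dots,j_{r})$ and over which $\eee$ and $\eee'$ are free. On $U$ the splitting of the Atiyah sequence~(\ref{EqUnAnalyticHKR}) furnished by $\sigma$ identifies the ring $\oo\baxx|_{U}$ with the trivial extension $\bb|_{U}$ and $\pp_{\sigma }|_{U}$ with the sheafification of the Atiyah--Kashiwara complex $P$ of Definition~\ref{DefUnHKR} attached to the pair $(N\ee_{X/Y},\oox)$. As all the functors involved are additive in $\eee$ and $\eee'$, the two assertions on $U$ reduce to the isomorphisms $A\lltensr{}_{C}P\xrightarrow{\sim}A\oti_{C}P$ and $\textrm{Hom}_{C}(P,A)\xrightarrow{\sim}\textrm{RHom}^{\ell}_{C}(P,A)$ established in Proposition~\ref{PropUnHKR}; note that the property of being an isomorphism does not depend on the auxiliary choices of regular sequence and of splitting, so it patches to a global statement.

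It then remains to identify the underived objects globally. Using the description~(\ref{EqDeuxDgAlg}) of $\bwi{p}{\oox}{\bb}$ and formula~(\ref{EqQuatreDgAlg}) for the $\bb$-module structure, one checks that $N\ee_{X/Y}\cdot\bwi{p}{\sigma }{\oo\baxx}$ is precisely the subsheaf corresponding to $\bwi{p}{\oox}{N\ee_{X/Y}}$ and that the differential of $\pp_{\sigma }$ takes its values inside it; consequently $\oox\oti_{\ooy }\pp_{\sigma }$ has vanishing differential and equals $\bop_{i=0}^{r}\bwi{i}{}{N\ee_{X/Y}}[i]$. Since $\eee$ and $\eee'$ are $\oox$-modules and $\oox\oti_{\ooy }\oo\baxx=\oox$, this gives $\eee\oti_{\ooy }(\pp_{\sigma }\oti_{\oox}\eee')=\eee\oti_{\oox}(\oox\oti_{\ooy }\pp_{\sigma })\oti_{\oox}\eee'=\bop_{i=0}^{r}\eee\oti\eee'\oti\bwi{i}{}{N\ee_{X/Y}}[i]$, and dually $\hoo_{\ooy }(\pp_{\sigma }\oti_{\oox}\eee,\eee')=\hoo_{\oox}\bigl(\oox\oti_{\ooy }(\pp_{\sigma }\oti_{\oox}\eee),\eee'\bigr)=\bop_{i=0}^{r}\hoo(\eee,\eee')\oti\bwi{i}{}{N_{X/Y}}[-i]$. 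Splicing the isomorphisms of the three steps produces $\Gamma_{\sigma }$ and $\Gamma\ee_{\sigma }$.

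The main obstacle is the ``derived $=$ underived'' assertion of the second step; but this has already been settled in Proposition~\ref{PropUnHKR} by comparing $P$ with a genuine Koszul resolution, so here it reduces to a localization. What remains is routine: the bookkeeping that keeps every object and morphism inside $D^{\textrm{b}}(\oox)$ rather than merely in $D^{\textrm{b}}(\ooy)$, and the elementary computation of the third step.
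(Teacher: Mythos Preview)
Your argument is correct and is precisely what the paper intends: its proof consists of the single sentence ``We refer the reader to the proof of Proposition~\ref{PropUnHKR},'' and you have carefully unpacked that reference by globalizing the quasi-isomorphism $\gamma$ locally, checking that the comparison maps are isomorphisms stalkwise, and identifying the underived objects via (\ref{EqDeuxDgAlg}) and (\ref{EqQuatreDgAlg}). There is nothing to add.
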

\begin{proof}
We refer the reader to the proof of Proposition \ref{PropUnHKR}.
\end{proof}
\begin{remark}
For any locally-free sheaves $\eee$ and $\eee'$ on $X$, there are canonical isomorphisms
\[
\eee \, \lltensr{}_{\ooy} \eee' \simeq \eee \, \lltens{}_{\oox} \he  ( \oox \, \lltensr{}_{\ooy} \he \eee' )\quad\textrm{and}\quad \rhl{\ooy }(\eee ,\eee' ) \simeq \eee' \,  \lltens{}_{\oox} \he \rhl {\ooy}(\eee, \oox).
\]
in $D^{\textrm{b}}\be(\oox)$ which are compatible with the HKR isomorphisms of Proposition \ref{PropUnAnalyticHKR}.
\end{remark}
\par \medskip
As in the local case, we also have a dual HKR isomorphism. To state the result, we consider for any holomorphic vector bundle $\eee$ on $X$ the isomorphism
\begin{align} \label{right}
\hoo_{\ooy }\he(\eee , \qq_{\sigma }\he\, \oti_{\oox} \he  \omega_{X/Y} \he  \oti_{\oox} \he \eee')\simeq\bop_{i=0}^{r}\hoo (\eee,\, \eee')  \, \oti \, \bwi{i}{}{N_{X/Y}\he} \, [-i]
\end{align}
given by the \textit{left} duality map $D^{\ell}\be$ introduced in \S~\ref{cup} (for this we consider the normal bundle as the dual of the conormal bundle, so that (\ref{right}) is an isomorphism of left modules over the graded exterior algebra of $N\ee_{X/Y}$).
\begin{proposition}[Dual HKR isomorphism, global case]\label{HKR2}
Let $(X, \sigma)$ be a quantized analytic cycle of codimension $r$ in a complex manifold $Y$\!. Then for any locally free sheaves $\mathcal{E}$ and $\eee'$ on $X$, the quasi-isomorphism $\smash{\xymatrix@C=17pt{\eee' \ar[r]^-{\sim}&\qq_{\sigma }\he\, \oti_{\oox} \he  \omega_{X/Y} \he \oti_{\oox} \he \eee'}}$ induces an isomorphism
\begin{align*}
&\xymatrix@C=35pt@R=1pt{\wh\Gamma_{\sigma }\, :\, \rhr{\oo_{Y}}(\eee ,\eee')\ar[r]^-{\sim}&\rhr{\oo_{Y}}(\eee ,\, \qq_{\sigma }\he\, \oti_{\oox} \he  \omega_{X/Y} \he  \oti_{\oox} \he \eee')\hspace*{20pt}}\\[-1ex]
&\xymatrix@C=35pt@R=1pt {\hspace*{40pt}&\hoo_{\ooy }\he(\eee ,\, \qq_{\sigma }\he\, \oti_{\oox} \he  \omega_{X/Y} \he\oti_{\oox} \he \eee')\ar[l]_-{\sim} \simeq \, \bop_{i=0}^{r}\hoo (\eee,\, \eee') \,  \oti \, \bwi{i}{}{N_{X/Y}\he} \, [-i] \ar[l]_-{\sim}}
\end{align*}
 in $D^{\emph{b}}(\oox)$, the last isomorphism being given by \emph{(\ref{right})}.
\end{proposition}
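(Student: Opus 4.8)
The plan is to derive Proposition~\ref{HKR2} from the already-established local dual HKR isomorphism (Proposition~\ref{PropDeuxHKR}, together with its refinement Proposition~\ref{PropDeuxBisHKR}) by the same sheafification mechanism that was used to pass from Proposition~\ref{PropUnHKR} to Proposition~\ref{PropUnAnalyticHKR}. Concretely, the quantized analytic cycle $(X,\sigma)$ splits the Atiyah sequence (\ref{EqUnAnalyticHKR}), so locally on $X$ we are exactly in the situation of Proposition~\ref{PropDeuxHKR} with $C=\ooy$, $J$ the ideal of $X$, $A=\oox$, $I=N\ee_{X/Y}$ and $B=\bb$. The retraction $\sigma$ provides, via the isomorphism $\psi_\sigma$, a \emph{global} identification $\oo\baxx\simeq\bb$, hence globally defined AK complexes $\pp_\sigma$ and $\qq_\sigma$ on $\bax$; this is what removes the local indeterminacy inherent in the splitting.

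First I would record the key technical fact, proved exactly as in Proposition~\ref{PropUnAnalyticHKR}, that $\pp_\sigma$ (hence $\qq_\sigma\simeq\hoo_{\oox}(\pp_\sigma,\omega_{X/Y})$, up to the sign convention of Definition~\ref{DefDeuxHKR}) is a complex of coherent sheaves on $\bax$ which, when restricted to the underlying reduced space, computes the relevant derived functors: since $\pp_\sigma$ is termwise a free $\oox$-module extended along $\bb$, the natural maps $\eee'\to\qq_\sigma\oti_{\oox}\omega_{X/Y}\oti_{\oox}\eee'$ and its $\hoo$-variant are quasi-isomorphisms, and moreover $\pp_\sigma$ computes $\oox\lltensr{}_{\ooy}(-)$. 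This last point is precisely the ``remarkable fact'' emphasized in the introduction: although the terms of $\qq_\sigma$ are torsion over $\ooy$, the complex is nonetheless adapted to the right-derived Hom functor $\rhr{\ooy}$. I would justify this by the same local argument as in the proof of Proposition~\ref{PropUnHKR}, namely by exhibiting, locally, a quasi-isomorphism $\gamma$ from a Koszul resolution $L$ of $\oox$ over $\ooy$ to $\qq_\sigma$ (via the dual quasi-isomorphism $\thr\to Q$ of Proposition~\ref{PropDeuxHKR} and the map $d_{r+1}$), and then checking that the resulting comparison morphism is independent of local choices thanks to the global trivialization $\psi_\sigma$.

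Granting this, the construction of $\wh\Gamma_\sigma$ is the evident composition displayed in the statement: apply $\rhr{\ooy}(\eee,-)$ to the quasi-isomorphism $\eee'\xrightarrow{\sim}\qq_\sigma\oti_{\oox}\omega_{X/Y}\oti_{\oox}\eee'$ to get the first isomorphism; then observe that $\hoo_{\ooy}(\eee,\qq_\sigma\oti_{\oox}\omega_{X/Y}\oti_{\oox}\eee')$ already computes the right-hand derived functor (by the local computation above, $\qq_\sigma$ being $\rhr{}$-acyclic against $\eee$), which gives the second isomorphism; and finally identify $\hoo_{\ooy}(\eee,\qq_\sigma\oti_{\oox}\omega_{X/Y}\oti_{\oox}\eee')$ with $\bigoplus_{i=0}^{r}\hoo(\eee,\eee')\oti\bwi{i}{}{N_{X/Y}}[-i]$ by splitting $\qq_\sigma$ over $\oox$ as $\omega_{X/Y}[r]$ plus a null-homotopic complex, and applying the \emph{left} duality isomorphism $D^\ell$ of \S~\ref{cup} termwise, which is exactly (\ref{right}). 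One must check that each $\hoo_{\ooy}(\eee,\bwi{r-p}{\oox}{\oo\baxx}\oti\omega_{X/Y})$ collapses to $\hoo(\eee,\eee')\oti\bwi{p}{}{N_{X/Y}}[-i]$ after passing to cohomology sheaves, using $\hoo_{\ooy}(\eee,\mathcal{F})=\hoo_{\oox}(\eee,\mathcal{F})$ for an $\oox$-module $\mathcal{F}$ together with the duality (\ref{dualityfirst})--(\ref{dualitysecond}).

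\textbf{The main obstacle} I anticipate is the ``right-acyclicity'' assertion: one has to argue that, although $\qq_\sigma$ consists of torsion $\ooy$-modules, the functor $\rhr{\ooy}(\eee,-)$ evaluated on $\qq_\sigma$ is represented by the naive complex $\hoo_{\ooy}(\eee,\qq_\sigma\oti_{\oox}\omega_{X/Y}\oti_{\oox}\eee')$ — equivalently, that $\qq_\sigma$ may be used in place of $\oox$ to compute $\rhr{\ooy}$. The clean way is to transport the statement through the quasi-isomorphism $\gamma:L\xrightarrow{\sim}\pp_\sigma$ of Proposition~\ref{PropUnHKR} and its dual: dualizing a locally free Koszul resolution over $\ooy$ one obtains precisely a complex locally isomorphic to $\qq_\sigma$ (this is exactly the identity (\ref{EqTroisPrelim}) at the sheaf level), and the local isomorphisms glue because $\psi_\sigma$ is globally defined. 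This is the place where the whole weight of Proposition~\ref{PropUnHKR} and Proposition~\ref{PropDeuxHKR} is used; once it is in hand, everything else is a formal diagram chase entirely parallel to the local proofs, and—citing the refinement in Proposition~\ref{PropDeuxBisHKR}—one even gets that $\wh\Gamma_\sigma$ differs from the ``dual of $\Gamma_\sigma$'' only by the explicit signs $(-1)^{(r-i)(r-i-1)/2}$ on each graded piece. Accordingly I would keep the written proof short, referring to the proofs of Propositions~\ref{PropUnHKR}, \ref{PropDeuxHKR} and \ref{PropUnAnalyticHKR} for the details and only indicating the sheafification and the gluing via $\psi_\sigma$.
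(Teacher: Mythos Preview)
Your proposal is correct and follows essentially the same approach as the paper: the paper's proof consists of a single sentence referring the reader to the proof of Proposition~\ref{PropDeuxHKR}, i.e.\ it simply asserts that the local argument sheafifies once the global splitting $\psi_\sigma$ is fixed. Your write-up is a faithful (and considerably more detailed) expansion of what that one-line reference entails; the only minor slip is that the quasi-isomorphism $\gamma$ goes from $L$ to $\pp_\sigma$, not to $\qq_\sigma$, and the acyclicity of $\qq_\sigma$ for $\rhr{\ooy}$ is obtained in the paper via the adjunction diagram in the proof of Proposition~\ref{PropDeuxHKR} rather than by directly dualizing the Koszul resolution.
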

\begin{proof}
We refer the reader to the proof of Proposition \ref{PropDeuxHKR}.
\end{proof}
\par \medskip
The set of retractions of $\ba{j}$ is an affine space over $\smash[b]{\textrm{Der}_{\C_{X}}\he(\oox , N\ee_{X/Y})} $, the latter being isomorphic to $\smash[b]{\textrm{Hom}_{\oo_{X}}\he(\Omega ^{1}_{X}, N\ee_{X/Y})} $. The main difference with the local situation is that the HKR isomorphism can depend a priori on $\sigma $. This problem will be discussed in \S~\ref{comparison}. At this stage, we only give the following result, which is the global analog of Proposition \ref{PropTroisHKR}:
\begin{theorem}\label{PropDeuxAnalyticHKR} Let $(X,Y)$ be a pair of complex manifolds such that $X$ is a closed submanifold of $Y$\!.
\begin{enumerate}
 \item [(1)] Assume that $\smash[b]{N\ee_{X/Y}} $ carries a global holomorphic connection. Then for any retractions $\sigma $ and $\sigma '$ of $\ba{j}$, $\pp_{\sigma }\he$ (resp.\ $\qq_{\sigma }\he$) is naturally isomorphic to $\pp_{\sigma '}\he$ (resp.\ $\qq_{\sigma '}\he$) and this isomorphism commutes with the quasi-isomorphism $\smash{\xymatrix@C=17pt{\pp_{\sigma }\he\ar[r]^-{\sim}&\oox }}$ (resp.\ $\smash{\xymatrix@C=17pt{\omega_{X/Y}^{\otimes -1}\ar[r]^-{\sim}&\qq_{\sigma }\he}}\!\!$).
 \par \smallskip
\item[(2)] Let $\sigma $ and $\sigma '$ be two retractions of $\ba{j}$ such that the element $\sigma '-\sigma $ in $\emph{Hom}_{\oox }\he(\Omega ^{1}_{X}, N\ee_{X/Y})$ is an isomorphism. Then $\pp_{\sigma }\he$ (resp.\ $\qq_{\sigma }\he$) is naturally isomorphic to $\pp_{\sigma '}\he$ (resp.\ $\qq_{\sigma '}\he$) and this isomorphism commutes with the quasi-isomorphism $\smash{\xymatrix@C=17pt{\pp_{\sigma }\he\ar[r]^-{\sim}&\oox }}$(resp.\ $\smash{\xymatrix@C=17pt{\omega_{X/Y}^{\otimes -1}\ar[r]^-{\sim}&\qq_{\sigma }\he}}\!\!$).
\end{enumerate}

\end{theorem}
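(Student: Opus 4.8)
The plan is to sheafify the local Proposition \ref{PropTroisHKR}. Put $A=\oox$, $I=N\ee_{X/Y}$ and $B=\bb$, so that the Atiyah sequence (\ref{EqUnAnalyticHKR}) is the sheaf-theoretic incarnation of (\ref{EqUnHKR}). A retraction $\sigma$ of $\ba{j}$ amounts to a splitting of (\ref{EqUnAnalyticHKR}) as sheaves of $\C_{X}\he$-algebras, hence to an isomorphism $\psi_{\sigma}\colon\bb\to\oo\baxx$; two retractions differ by the derivation $\chi=\sigma'-\sigma$ in $\textrm{Der}_{\C_{X}}\he(\oox ,N\ee_{X/Y})$, and a direct check on (\ref{EqUnAnalyticHKR}) gives $\psi_{\sigma'}=\psi_{\sigma}\circ u_{\chi}$, where $u_{\chi}\he$ is the automorphism of $\bb$ attached to $\chi$ as in \S~\ref{SousSecAKComplexes}. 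So it is enough to produce a $u_{\chi}\he$-linear automorphism $\varphi$ of the complex of $\bb$-modules $\pp$ (resp.\ $\qq$) commuting with the augmentation $\pp\to\oox$ (resp.\ with $\omega_{X/Y}^{\otimes -1}\to\qq$, using here the adjunction identification $\thr=\omega_{X/Y}^{\otimes -1}$ of complexes of $\oox$-modules coming from $\omega_{X/Y}\he=\det N_{X/Y}\he[-r]$). Transporting such a $\varphi$ along $\psi_{\sigma}\ee$ and $\psi_{\sigma'}\ee$ and invoking the relation $\psi_{\sigma'}=\psi_{\sigma}\circ u_{\chi}$, one obtains an $\oo\baxx$-linear isomorphism $\pp_{\sigma}\he\simeq\pp_{\sigma'}\he$ (resp.\ $\qq_{\sigma}\he\simeq\qq_{\sigma'}\he$) with the asserted compatibilities.

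Next I would write down $\varphi$ using the very formulae of the proof of Proposition \ref{PropTroisHKR}, now read as morphisms of sheaves on $X$. Through (\ref{EqDeuxDgAlg}) one has $\bwi{p+1}{\oox}{\bb}\simeq\bw{p+1}{N\ee_{X/Y}}\oplus\bw{p}{N\ee_{X/Y}}$, and one sets $\varphi_{-p}\he(\ubi,\ubj)=(\ubi+R_{p}\he(\ubj),\ubj)$, where $R_{p}\colon\bw{p}{N\ee_{X/Y}}\to\bw{p+1}{N\ee_{X/Y}}$ is: in case (1), the composition of the induced connection $\bw{p}{}\nabla$ (for $\nabla$ a global holomorphic connection on $N\ee_{X/Y}$, which exists by hypothesis), then $\chi\oti\id$, then exterior multiplication; in case (2), the composition $\bw{p+1}{}\wh{\chi}\circ\mathfrak{d}_{p}\he\circ(\bw{p}{}\wh{\chi})^{-1}$, where $\wh{\chi}\colon\Omega^{1}_{X}\to N\ee_{X/Y}$ is the isomorphism attached to $\chi$ and $\mathfrak{d}$ is the exterior differential of the de Rham complex of $X$. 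The crucial observation is that the data defining $R_{p}$ — a global connection in case (1), a global isomorphism $\wh{\chi}$ and the canonical exterior differential in case (2) — are defined on all of $X$, so $R_{p}$, and hence $\varphi$, is globally defined with no gluing or cocycle verification needed. That $\varphi$ is a $u_{\chi}\he$-linear morphism of complexes commuting with the augmentation is then checked sectionwise, the computations being verbatim those of Proposition \ref{PropTroisHKR} (via (\ref{EqQuatreDgAlg})). The statement for $\qq$ follows either from the analogous formulae through the duality isomorphism (\ref{dualityfirst}), or formally from the case of $\pp$ by applying $\hoo_{\oox}(-,\thr)$.

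I do not expect a genuine obstacle: the entire mathematical content lies in the local Proposition \ref{PropTroisHKR}, and the only point requiring care is precisely the one highlighted above — that the local recipe uses no auxiliary datum which fails to exist globally, namely the canonical exterior differential in case (2), and a \emph{global} holomorphic connection on $N\ee_{X/Y}$ in case (1), which is exactly the standing hypothesis there. The remaining work is bookkeeping: verifying $\psi_{\sigma'}=\psi_{\sigma}\circ u_{\chi}$ so that a $u_{\chi}\he$-linear map of $\bb$-modules transports to an $\oo\baxx$-linear map between $\pp_{\sigma}\he$ and $\pp_{\sigma'}\he$, and noting that compatibility of the transported isomorphism with the quasi-isomorphisms to $\oox$ and to $\omega_{X/Y}^{\otimes -1}$ is immediate since $\varphi$ commutes with the augmentations by construction. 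As in the local setting (cf.\ the corollary to Proposition \ref{PropTroisHKR}), this also yields that under either hypothesis $\pp_{\sigma}\he$ and $\qq_{\sigma}\he$ are, up to isomorphism, insensitive to the particular splitting of (\ref{EqUnAnalyticHKR}) chosen.
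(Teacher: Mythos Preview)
Your proposal is correct and follows exactly the paper's approach: the paper's proof is the single line ``We refer the reader to the proof of Proposition \ref{PropTroisHKR}'', and you have faithfully spelled out the sheafification, including the key observation that the auxiliary data (the global connection in (1), the exterior differential and the global isomorphism $\wh{\chi}$ in (2)) are globally defined. One minor bookkeeping point: with the convention $\psi_{\sigma'}=\psi_{\sigma}\circ u_{\chi}$ for $\chi=\sigma'-\sigma$, the $u_{\chi}\he$-linearity needed to transport $\varphi$ to an $\oo\baxx$-linear map $\pp_{\sigma}\he\to\pp_{\sigma'}\he$ actually requires $\chi=\sigma-\sigma'$ rather than $\sigma'-\sigma$; this is harmless since the hypotheses in both (1) and (2) are symmetric in $\sigma,\sigma'$.
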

\begin{proof}
We refer the reader to the proof of Proposition \ref{PropTroisHKR}.
\end{proof}
As a consequence, we obtain immediately:
\begin{corollary}\label{ThUn}
 Assume that $Y=X\tim X$, and let $\sigma _{1}\he$ and $\sigma _{2}\he$ be the retractions of $\ba{j}$ induced by the first and second projections. For any complex number $t$, we put $\sigma_{t}\he=(2-t)\sigma _{1}\he+(t-1)\sigma _{2}\he$. Then for any $s$ and $t$ in $\C$, $\Gamma _{\sigma _{s}\he}\he=\Gamma _{\sigma _{t}\he}\he$.
\end{corollary}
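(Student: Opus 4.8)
The plan is to deduce this directly from Theorem~\ref{PropDeuxAnalyticHKR}(2). First I would check that each $\sigma_t$ is indeed a retraction of $\ba j$: the coefficients satisfy $(2-t)+(t-1)=1$, so $\sigma_t$ is an affine combination of the retractions $\sigma_1$ and $\sigma_2$, hence again a retraction, the set of retractions of $\ba j$ being an affine space over $\mathrm{Der}_{\C_X}(\oox,N\ee_{X/Y})\simeq\mathrm{Hom}_{\oox}(\Omega^1_X,N\ee_{X/Y})$. Using $\sigma_1$ as base point one gets $\sigma_t=\sigma_1+(t-1)(\sigma_2-\sigma_1)$, whence
\[
\sigma_t-\sigma_s=(t-s)\,(\sigma_2-\sigma_1)\qquad\text{in}\quad\mathrm{Hom}_{\oox}(\Omega^1_X,N\ee_{X/Y}).
\]

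Next I would recall the classical description of $\sigma_2-\sigma_1$ for the diagonal. When $Y=X\times X$ there is a canonical isomorphism $N\ee_{X/Y}\simeq\Omega^1_X$, sending the class of $a\oti 1-1\oti a$ to $da$; under it the $\C_X$-derivation $\sigma_2-\sigma_1$ of $\oox$ becomes the universal derivation $d\colon\oox\to\Omega^1_X$, i.e.\ the corresponding element of $\mathrm{Hom}_{\oox}(\Omega^1_X,N\ee_{X/Y})$ is $\mathrm{id}_{\Omega^1_X}$. In particular $\sigma_2-\sigma_1$ is an isomorphism, so for $s\neq t$ the difference $\sigma_t-\sigma_s=(t-s)(\sigma_2-\sigma_1)$ is again an isomorphism, being a nonzero scalar multiple of one. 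The case $s=t$ is trivial, and for $s\neq t$ Theorem~\ref{PropDeuxAnalyticHKR}(2) applies to the pair $(\sigma_s,\sigma_t)$: it produces a natural isomorphism $\pp_{\sigma_s}\xrightarrow{\ \sim\ }\pp_{\sigma_t}$ of complexes of $\oo\baxx$-modules commuting with the quasi-isomorphisms to $\oox$. Feeding this isomorphism into the construction of Proposition~\ref{PropUnAnalyticHKR} — i.e.\ applying $\eee\oti_{\ooy}(\,-\,\oti_{\oox}\eee')$ and its left derived analogue, and using compatibility with the augmentations — gives a commutative diagram identifying the two chains of isomorphisms that define $\Gamma_{\sigma_s}$ and $\Gamma_{\sigma_t}$, hence $\Gamma_{\sigma_s}=\Gamma_{\sigma_t}$. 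The same reasoning yields $\wh\Gamma_{\sigma_s}=\wh\Gamma_{\sigma_t}$ for the dual isomorphisms.

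The step I expect to need the most care, and which is not purely formal, is to check that the natural isomorphism $\pp_{\sigma_s}\simeq\pp_{\sigma_t}$ is compatible not merely with the maps to $\oox$ but with the full HKR data — the module structures used to form $-\oti_{\oox}\eee'$, and especially the terminal identifications $\pp_\sigma\oti_{\oox}\eee'\simeq\bop_{i=0}^{r}\eee'\oti\bwi{i}{}{N\ee_{X/Y}}[i]$ occurring in $\Gamma_\sigma$. This is settled by the explicit form of the automorphism built in the proof of Proposition~\ref{PropTroisHKR}: on each term $\bwi{p+1}{\oox}{\bb}\simeq\bwi{p+1}{\oox}{N\ee_{X/Y}}\oplus\bwi{p}{\oox}{N\ee_{X/Y}}$ it has the shape $(\ub{i},\ub{j})\mapsto(\ub{i}+R_p(\ub{j}),\ub{j})$, so that after applying $\eee\oti_{\ooy}(\,-\,\oti_{\oox}\eee')$ — which amounts to discarding the $\bwi{p+1}{}{N\ee_{X/Y}}$-summand and retaining only the unchanged component $\ub{j}$ — the induced automorphism of $\bop_{i}\eee\oti\eee'\oti\bwi{i}{}{N\ee_{X/Y}}[i]$ is the identity. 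This is exactly what turns the natural isomorphism of Atiyah--Kashiwara complexes into the equality $\Gamma_{\sigma_s}=\Gamma_{\sigma_t}$, and it is where the argument of Proposition~\ref{PropTroisHKR} really enters.
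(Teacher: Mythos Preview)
Your proposal is correct and follows essentially the same approach as the paper: compute $\sigma_t-\sigma_s$ as a nonzero scalar multiple of $\sigma_1-\sigma_2$, observe that the latter corresponds to the canonical isomorphism $\Omega^1_X\simeq N\ee_{X/X\times X}$, and invoke Theorem~\ref{PropDeuxAnalyticHKR}(2). Your final paragraph actually supplies a detail the paper leaves implicit---namely, why the isomorphism $\pp_{\sigma_s}\simeq\pp_{\sigma_t}$ of Theorem~\ref{PropDeuxAnalyticHKR} yields an \emph{equality} $\Gamma_{\sigma_s}=\Gamma_{\sigma_t}$ rather than just a commutative square; your observation that the explicit automorphism $(\ub{i},\ub{j})\mapsto(\ub{i}+R_p(\ub{j}),\ub{j})$ acts trivially on the $\ub{j}$-component, and hence induces the identity after applying $\oox\oti_{\ooy}(-)$, is exactly the right justification.
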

\begin{proof}
 The map $\sigma _{1}\he-\sigma _{2}\he$ in $\textrm{Der}_{\C_{X}\he}\he(\oox ,N\ee_{X/X \times X})$ is given by
\[
(\sigma _{1}\he-\sigma _{2}\he)(f)=\{\xymatrix@C=17pt{\! \!(x,y)\ar@{|->}[r]&f(x)-f(y)\! \!}\}\ \textrm{modulo}\ \jj_{X}^{2}.
\]
It induces an isomorphism between $\Omega ^{1}_{X}$ and $N\ee_{X/X \times X}$. Now, for any complex numbers $s$ and $t$ such that $s\neq t$, $\sigma _{t}\he-\sigma _{s}\he=(s-t)(\sigma _{1}\he-\sigma _{2}\he)$ so that $\Gamma _{\sigma _{s}\he}\he=\Gamma _{\sigma _{t}\he}\he$ by Theorem \ref{PropDeuxAnalyticHKR} (2).
\end{proof}
\subsection{The twisted case}\label{TwistedCase}
For any nonnegative integer $p$, the sheaf $\bwi{p+1}{\sigma \he}{\oo\baxx}$ is isomorphic (as a sheaf of $\oox $-modules) to $^{\vphantom{\bigl)}}\bwi{p+1}{}{N\ee_{X/Y}}\oplus\bwi{p}{}{N\ee_{X/Y}}$ via the isomorphism (\ref{EqDeuxDgAlg}). Besides, any section $s$ of the sheaf $\hoo^{\vphantom{\bigl)}}_{\oox}(\bwi{p}{}{N\ee_{X/Y}},\bwi{p+1}{}{N\ee_{X/Y}})$ induces a section of $\mathcal{A}ut_{\oo\baxx\, }(\bwi{p+1}{\sigma \he}{\oo\baxx}) {}^{\vphantom{a}}$ given by
\[
\xymatrix@C=17pt{(\ubi ,\ubj )\ar@{|->}[r]&(\ubi +s(\ubj ),\ubj )}
\]
so that we have a cano\-ni\-cal embedding of ${\hoo_{\oox}\he(\bwi{p}{}{N\ee_{X/Y}},\bwi{p+1}{}{N\ee_{X/Y}})}$ in $\mathcal{A}ut^{\vphantom{stupide}}_{\oo\baxx\, }(\bwi{p+1}{\sigma \he}{\oo\baxx}).$

\par \medskip
Recall that for any vector bundles $\mathcal{E}$ and $\mathcal{F}$ on $X$ and every nonnegative integer $i$, there is a canonical isomorphism between $\textrm{Ext}^{i}_{\oox} (\mathcal{E}, \mathcal{F})$ and $\textrm{H}^{i}\be(X,\hoo_{\oox}\he(\mathcal{E}, \mathcal{F})).$

\begin{definition}\label{DefUnTwisted}
 For any nonnegative integer $p$ and any $\lambda $ in $\smash[b]{\textrm{Ext}^{1}_{\oox}(\bwi{p}{}{N\ee_{X/Y}},\bwi{p+1}{}{N\ee_{X/Y}}))} $, we denote by $\smash{\bwi{p}{\sigma ,\,\lambda }\oo\baxx} $ any twisted sheaf associated with the image of the class $\lambda $ in the \v{C}${\vphantom{\Bigl)}}$ech cohomology group $\check{\textrm{H}}^{1}\be(X,\mathcal{A}ut_{\oo_{\ba{X}}\he}\he(\bwi{p+1}{\sigma \he}{\oo\baxx}))\vphantom{\Bigl)}$.
 \end{definition}

This definition makes sense because all such twisted sheaves are isomorphic. The sheaves $\bwi{p}{\sigma, \,\lambda }{\oo\baxx}$ are sheaves of $\oo\baxx$ -modules which are locally isomorphic to $\bwi{p}{\sigma \he}{\oo\baxx}$. They fit into exact sequences
\begin{equation}\label{EqUnTwisted}
 \sutrgdpt{\bwi{p+1}{}{N\ee_{X/Y}}}{\bwi{p}{\sigma ,\,\lambda }{\oo\baxx}}{\bwi{p}{}{N\ee_{X/Y}}}{.}
\end{equation}
If we fix for each integer $p$ between $0$ and $r-1$ a class $\lambda _{p}$ in
$\textrm{Ext}^{1}_{\oox}(\bwi{p}{}{N\ee_{X/Y}},\bwi{p+1}{}{N\ee_{X/Y}})$, the exact sequences (\ref{EqUnTwisted}) allow to define twisted AK complexes $\pp_{\sigma ,\,\lambda _{0},\dots,\,\lambda _{r-1}}\he$ and $\qq_{\sigma ,\,\lambda _{0},\dots,\,\lambda _{r-1}}^{\vphantom{idiot}} $ which are well-defined modulo isomorphism. Then the results of Proposition \ref{PropUnAnalyticHKR} also hold in the twisted case.
\begin{proposition}\label{PropUnTwisted}
 Let $\sigma $ be a retraction of $\ba{j}$, $\chi $ be in $\emph{Der}_{\C_{X}\he}\he(\oox, N\ee_{X/Y})$ and $\wh{\chi }$ be the associated section of the sheaf $\smash[b]{\hoo_{\oox}\he(\Omega ^{1}_{X}, N\ee_{X/Y})} $. For any nonnegative integer $p$, let $\lambda_{p}$ denote the image of the Atiyah class of $\vphantom{\Bigl)}\bwi{p}{}{N\ee_{X/Y}}$ by $\wh{\chi} \we  \emph{id}$ in
$\emph{Ext}^{1^{\vphantom{idiot}}}_{\oox}(\bwi{p}{}{N\ee_{X/Y}}, \bwi{p+1}{}{N\ee_{X/Y}}).$
Then
$\bwi{p+1}{\sigma +\chi \vphantom{\lambda} }\oo\baxx$ and $\bwi{p+1}{\sigma ,\,\lambda_{p}}{\oo\baxx}$ are isomorphic as sheaves of $\oo\he\baxx$ -modules.
\end{proposition}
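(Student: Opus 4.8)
The plan is to build the isomorphism over a cover, using the recipe that underlies Proposition~\ref{PropTroisHKR}, and then to recognise the resulting gluing data as a \v{C}ech cocycle representing $\lambda_{p}$. Concretely, I would first choose an open cover $\{U_{\alpha}\}$ of $X$ on which $N\ee_{X/Y}$ admits a holomorphic connection $\nabla_{\alpha}$, and write $\bwi{q}{}{\nabla_{\alpha}}$ for the connection induced on $\bwi{q}{}{N\ee_{X/Y}}$ over $U_{\alpha}$. Applying over $U_{\alpha}$ the (sheafified) construction in the proof of Proposition~\ref{PropTroisHKR}\,(1) to $\chi$ and $\nabla_{\alpha}$, in the degree corresponding to $\bwi{p+1}{}{}$, I obtain the $\oox$-linear map $R^{\alpha}=(\wh{\chi}\we\id)\circ\bwi{p}{}{\nabla_{\alpha}}\colon\bwi{p}{}{N\ee_{X/Y}}\to\bwi{p+1}{}{N\ee_{X/Y}}$ together with the automorphism $\varphi^{\alpha}\colon(\ubi,\ubj)\mapsto(\ubi+R^{\alpha}(\ubj),\ubj)$ of the underlying sheaf $\bwi{p+1}{}{N\ee_{X/Y}}\oplus\bwi{p}{}{N\ee_{X/Y}}$. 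The key input of loc.\ cit.\ is the intertwining identity $\varphi^{\alpha}(b*m)=u_{\chi}(b)*\varphi^{\alpha}(m)$ for $b$ in $\bb$; since the two splittings $\sigma$ and $\sigma+\chi$ of the Atiyah sequence (\ref{EqUnAnalyticHKR}) differ by $\chi$ and hence induce the automorphism $u_{\chi}$ of $\bb$ (so that $\psi_{\sigma}^{-1}\circ\psi_{\sigma+\chi}=u_{\chi}$), this identity says precisely that $\varphi^{\alpha}$ is an $\oo\baxx$-linear isomorphism $\bwi{p+1}{\sigma+\chi}{\oo\baxx}|_{U_{\alpha}}\to\bwi{p+1}{\sigma}{\oo\baxx}|_{U_{\alpha}}$.

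Next I would compute the transition automorphisms. On $U_{\alpha}\cap U_{\beta}$ the map $g_{\alpha\beta}=\varphi^{\alpha}\circ(\varphi^{\beta})^{-1}$ is an $\oo\baxx$-automorphism of $\bwi{p+1}{\sigma}{\oo\baxx}$, and a one-line computation gives $g_{\alpha\beta}(\ubi,\ubj)=(\ubi+(R^{\alpha}-R^{\beta})(\ubj),\ubj)$. By the Leibniz-type formula $R^{\alpha}(a\,\ubj)=a\,R^{\alpha}(\ubj)+\chi(a)\we\ubj$ used in the proof of Proposition~\ref{PropTroisHKR} (and the same for $\beta$), the two anomalous terms cancel in the difference, so $s_{\alpha\beta}:=R^{\alpha}-R^{\beta}$ is an honest section of $\hoo_{\oox}(\bwi{p}{}{N\ee_{X/Y}},\bwi{p+1}{}{N\ee_{X/Y}})$ and $g_{\alpha\beta}=\id+s_{\alpha\beta}\circ\pr$ lies in the image of the canonical embedding of $\hoo_{\oox}(\bwi{p}{}{N\ee_{X/Y}},\bwi{p+1}{}{N\ee_{X/Y}})$ into $\mathcal{A}ut_{\oo\baxx}(\bwi{p+1}{\sigma}{\oo\baxx})$ recalled in \S\ref{TwistedCase}. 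The relation $g_{\alpha\beta}g_{\beta\gamma}=g_{\alpha\gamma}$, equivalently $s_{\alpha\beta}+s_{\beta\gamma}=s_{\alpha\gamma}$, shows that $(s_{\alpha\beta})$ is a \v{C}ech $1$-cocycle, and the family $(\varphi^{\alpha})$ exhibits $\bwi{p+1}{\sigma+\chi}{\oo\baxx}$ as the sheaf obtained by gluing copies of $\bwi{p+1}{\sigma}{\oo\baxx}$ along $(g_{\alpha\beta})$.

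It then remains to identify the class of this cocycle. Since $(\bwi{p}{}{\nabla_{\alpha}}-\bwi{p}{}{\nabla_{\beta}})$ is the standard \v{C}ech representative of the Atiyah class of $\bwi{p}{}{N\ee_{X/Y}}$ in $\ext^{1}_{\oox}(\bwi{p}{}{N\ee_{X/Y}},\Omega^{1}_{X}\oti\bwi{p}{}{N\ee_{X/Y}})$, and $s_{\alpha\beta}=(\wh{\chi}\we\id)\circ(\bwi{p}{}{\nabla_{\alpha}}-\bwi{p}{}{\nabla_{\beta}})$, the cocycle $(s_{\alpha\beta})$ represents exactly the image $\lambda_{p}$ of that Atiyah class under $\wh{\chi}\we\id$. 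By Definition~\ref{DefUnTwisted}, the sheaf obtained from $\bwi{p+1}{\sigma}{\oo\baxx}$ by gluing along any cocycle representing $\lambda_{p}$ is, up to isomorphism, $\bwi{p+1}{\sigma,\,\lambda_{p}}{\oo\baxx}$; hence $(\varphi^{\alpha})$ descends to the desired isomorphism of $\oo\baxx$-modules between $\bwi{p+1}{\sigma+\chi}{\oo\baxx}$ and $\bwi{p+1}{\sigma,\,\lambda_{p}}{\oo\baxx}$.

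The one place that genuinely requires care is the first step: one must check that each local $\varphi^{\alpha}$, produced from a connection on $N\ee_{X/Y}$ by a recipe that is only $\bb$-\emph{semi}linear, becomes honestly $\oo\baxx$-linear once source and target are given the two distinct $\oo\baxx$-module structures coming from $\sigma+\chi$ and from $\sigma$. This rests on the precise shape of the intertwining identity in Proposition~\ref{PropTroisHKR} together with an explicit description of how $\psi_{\sigma}\colon\bb\to\oo\baxx$ varies with the retraction (namely $\psi_{\sigma}^{-1}\circ\psi_{\sigma+\chi}=u_{\chi}$). Everything beyond that — the computation of $g_{\alpha\beta}$, the observation that the anomalous terms cancel, and the comparison with Definition~\ref{DefUnTwisted} — is routine \v{C}ech bookkeeping.
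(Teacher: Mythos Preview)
Your proposal is correct and follows essentially the same approach as the paper's proof: choose local connections $\nabla_{\alpha}$ on $N\ee_{X/Y}$, use the recipe from Proposition~\ref{PropTroisHKR}\,(1) to produce local $\oo\baxx$-linear isomorphisms $\varphi^{\alpha}$ between $\bwi{p+1}{\sigma+\chi}{\oo\baxx}$ and $\bwi{p+1}{\sigma}{\oo\baxx}$, and identify the resulting transition cocycle $(\wh{\chi}\we\id)\circ(\bwi{p}{}{\nabla_{\alpha}}-\bwi{p}{}{\nabla_{\beta}})$ as a \v{C}ech representative for $\lambda_{p}$. The paper's proof is more terse (it does not spell out the $u_{\chi}$-linearity argument you flag as the delicate point, nor the cancellation of the anomalous Leibniz terms), but the skeleton is identical; your version simply makes explicit what the paper leaves to the reader.
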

\begin{proof}
 Let $\bigl( U_{\alpha }\he\bigr)_{\alpha \in J}$ be an open covering of $X$ such that $N\ee_{X/Y}$ admits a holomorphic connection $\nabla_{\alpha}$ on each $U_{\alpha }\he$. For every $\alpha^{\vphantom{\bigl)}}$ in $J$, $\bwi{p+1}{\sigma +\chi }{\oo\baxx}$ is isomorphic on $U_{\alpha }\he$ to $\bwi{p+1}{\sigma \he}{\oo\baxx}$ via
\[
\xymatrix{\varphi _{\alpha }\he:(\ubi ,\ubj )\ar@{|->}[r]&(\ubi -(\wh{\chi }\wedge\id)\,(\Lambda ^{p}\be\,\nabla\he_{\!\alpha }\,(\ubj )),\ubj ).}
\]
Thus, for any $\alpha $, $\beta $ in $J$, if $M_{\alpha \beta }\he=\Lambda ^{p}\be\,\nabla\he_{\!\alpha\,|\,U_{\alpha \beta } }-\Lambda ^{p}\be\,\nabla\he_{\!\beta\,|\,U_{\alpha \beta } }$,
\[
\varphi _{\beta }\he\circ \varphi_{\alpha }^{-1}(\ubi ,{\ubj })=(\ubi +(\wh{\chi }\wedge\id)\,(M_{\alpha \beta }(\ubj )),\ubj ).
\]
Since $M_{\alpha \beta }\he$ is a $1$-cocycle representing the Atiyah class of the holomorphic vector bundle $\bwi{p}{}{N\ee_{X/Y}}$ in $\textrm{Ext}^{1}_{\oox}(\bwi{p}{}{N\ee_{X/Y}},\Omega ^{1}_{X} {{\vphantom{\Bigl(A}}}\otimes \bwi{p}{}{N\ee_{X/Y}}),$ we get the result.
\end{proof}
We now recall Arinkin--C\u{a}ld\u{a}raru's construction of general analytic HKR isomorphisms and make the link with twisted AK complexes and twisted HKR isomorphisms. Recall that for any locally-free sheaf $\eee$ on $X$, if $\eee$ admits a locally-free extension $\eex$ on $\bax$, there is an exact sequence
\begin{equation}\label{EqDeuxTwisted}
 \sutrgd{N\ee_{X/Y} \oti_{\oox}\he\eee}{\eex}{\eee}
\end{equation}
of sheaves of $\oo\baxx\,$-modules. Thus, for any nonnegative integer $n$, if $\kk_{-n}\he$ is a locally free extension of\ \ $\bigo{}{n} N\ee_{X/Y}$ on $\bax ^{\vphantom {\bigl)}}$, we have an exact sequence
\begin{equation}\label{EqTroisTwisted}
 \xymatrix@C=20pt{0\ar[r]&\bigo{}{n+1}N\ee_{X/Y}\ar[r]^-{i_{n}\he}&\kk_{-n}\he\ar[r]^-{\pi _{n}\he}&\bigo{}{n}N\ee_{X/Y}\ar[r]&0.}
\end{equation}
\begin{definition}\label{defUnTwisted}
If $\smash[b]{\bigl( \kk_{-n}\he\bigr)_{n\ge 0}\he} $ are locally free sheaves on $\bax$ extending $\smash[b]{\bigl( \bigo{}{n}N\ee_{X/Y}\bigr)_{n\ge 0}\he} $, the twisted Arinkin--C\u{a}ld\u{a}raru complex $(\kk,\nu )$ is the complex $\smash[b]{\bop_{n\ge 0}\he\kk_{-n}\he} $ endowed with the differential $\nu$ given for each positive integer $n$ by
$\nu _{-n}\he=\smash[b]{\frac{1}{n!}} \, i_{n-1}\he\circ\pi _{n}\he$.
\end{definition}
\par \medskip
Since the sequences (\ref{EqTroisTwisted}) are exact, $(\kk,\nu )$ is a free resolution of $\oox$ over $\oo\baxx$. The main result about $(\kk,\nu )$ is:
\begin{theorem}[\cite{AC}]\label{PropDeuxTwisted}
Let $(X,Y)$ be a couple of connected complex manifolds such that $X$ is a closed complex submanifold of $Y$ of codimension $r$. Then
\begin{enumerate}
\item[(1)] The complex $\smash[b]{\oox\lltensr{}_{\ooy}\he\oox}$ is formal in $D^{\emph{b}}\be(\oox)$ if and only if $\smash{N\ee_{X/Y}}$ can be exten\-ded to a locally-free sheaf on $\bax {\vphantom{\Bigl )}}$.
\par \smallskip
\item[(2)] If $\bigl( \kk_{-n}\he\bigr)_{n\ge 0}\he$ is a sequence of locally-free sheaves on $\bax$ extending $\bigl(\bigo{}{n}N\ee_{X/Y}\bigr)_{n\ge 0}\he$, then the map
\par
\[
\xymatrix@C=28pt{ \Gamma _{\kk}\he \, : \, \oox \, \lltens{}_{\ooy}\he\oox& \oox\lltens{}_{\ooy}\he\kk\ar[l]_-{\sim}\ar[r]& \oox\oti_{\ooy}\he\kk \, \simeq\, \bop_{i\ge 0}\bigo{}{i}N\ee_{X/Y}\, [i]\ar[r]^-{\smash[t]{\bop \limits_{i=0}^r \mathfrak{a}_i \he}}& \bop_{i=0}^{r}\bwi{i}{}{N\ee_{X/Y}\,[i]}}
\]
is an isomorphism in $D^{\emph{b}}\be(\oox)$.
\end{enumerate}
\end{theorem}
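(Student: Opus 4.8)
The plan is to treat the two parts of the statement separately, to deduce the sufficiency half of~(1) from~(2), and to isolate in the converse half of~(1) the one point that is not a formal globalisation of the material already developed.

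First I would prove~(2) by globalising Proposition~\ref{PropUnArinkinCalda}, exactly as Proposition~\ref{PropUnAnalyticHKR} globalises Proposition~\ref{PropUnHKR}. Since the sequences~(\ref{EqTroisTwisted}) are exact, $(\kk,\nu)$ is a resolution of $\oox$ by locally-free $\oo\baxx$-modules, hence a bounded-above complex of sheaves of $\ooy$-modules quasi-isomorphic to $\oox$; therefore $\Gamma_\kk$ is a well-defined morphism in $D^{\textrm{b}}(\oox)$ and it remains only to check that it is an isomorphism, which may be tested locally on $X$. I would fix a point of $X$ and a small Stein open subset $U$ of $Y$ on which $X$ is defined by a regular sequence $(f_1,\dots,f_r)$; over $U$ the Koszul complex $L$ on $(f_1,\dots,f_r)$ is a free resolution of $\oox$ over $\ooy$, the Atiyah sequence~(\ref{EqUnAnalyticHKR}) splits, and all locally-free $\oo\baxx$-modules are free, so one can choose isomorphisms $\kk_{-n}|_U\simeq K_{-n}$ compatible with~(\ref{EqTroisTwisted}), identifying $\kk|_U$ with the local Arinkin--C\u{a}ld\u{a}raru complex $(K,\nu)$ of \S\ref{SectionArikinCalda}. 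By naturality of the whole construction, the restriction of $\Gamma_\kk$ to $U$ corresponds, under this identification, to the local HKR map, which is an isomorphism by Proposition~\ref{PropUnArinkinCalda}; hence $\Gamma_\kk$ is a local isomorphism, therefore an isomorphism in $D^{\textrm{b}}(\oox)$.

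For the sufficiency part of~(1), if $N\ee_{X/Y}$ admits a locally-free extension $\mathcal{N}$ to $\bax$ I would set $\kk_{-n}=\bigo{\oo\baxx}{n}\mathcal{N}$; these are locally-free sheaves on $\bax$ extending $\bigo{}{n}N\ee_{X/Y}$, so $(\kk,\nu)$ is globally defined and part~(2) provides an isomorphism $\oox\lltens{}_{\ooy}\he\oox\simeq\bop_{i=0}^{r}\bw{i}{}N\ee_{X/Y}[i]$ in $D^{\textrm{b}}(\oox)$, that is, $\oox\lltens{}_{\ooy}\he\oox$ is formal.

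It then remains to prove the converse in~(1): if $\oox\lltens{}_{\ooy}\he\oox$ is formal then $N\ee_{X/Y}$ extends to $\bax$. The inclusion $X\hookrightarrow\bax$ realises $\bax$ as a square-zero thickening of $X$ with ideal $N\ee_{X/Y}$, so the obstruction to extending the locally-free sheaf $N\ee_{X/Y}$ across it is a single cohomology class $\alpha$, which vanishes exactly when such an extension exists. The heart of the matter is to identify $\alpha$ with the primary obstruction to the formality of $\oox\lltens{}_{\ooy}\he\oox$: the class in $\textrm{Ext}^{2}_{\oox}(N\ee_{X/Y},\bw{2}{}N\ee_{X/Y})$ of the two-step truncation $\tau_{\le-1}\tau_{\ge-2}(\oox\lltens{}_{\ooy}\he\oox)$, equivalently the first non-trivial operation in the minimal $A_\infty$-structure on the cohomology $\bop_i\bw{i}{}N\ee_{X/Y}[i]$. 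Granting this identification, formality forces $\alpha=0$ and hence $N\ee_{X/Y}$ extends; conversely once it extends, part~(2) re-establishes formality, so no higher obstruction is relevant. This last identification is the only step that is not a routine globalisation of \S\ref{LocHKR}--\S\ref{SectionArikinCalda}, and it is exactly the converse direction of \cite{AC}, to which I would refer for that computation; it is also the step I expect to be the main obstacle.
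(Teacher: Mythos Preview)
The paper does not actually prove this theorem: it is stated with attribution to \cite{AC}, and the only comment the paper makes is the remark immediately following the statement, namely that the result appears in \cite{AC} in the untwisted case $\kk_{0}=\oo\baxx$, $\kk_{-n}=\bigotimes^{n}_{\oo_{\bar X}}\kk_{1}$, and that ``the proof remains unchanged under these slightly more general hypotheses.'' So there is no proof in the paper to compare against.

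Your sketch is therefore more detailed than anything the paper offers, and it is essentially correct. Your argument for~(2) --- that $\Gamma_{\kk}$ is a well-defined morphism in $D^{\textrm{b}}(\oox)$ because $(\kk,\nu)$ resolves $\oox$ over $\oo\baxx$, and that one checks it is an isomorphism locally by reducing to Proposition~\ref{PropUnArinkinCalda} --- is exactly the right shape, and it is in the same spirit as the paper's own passage from Proposition~\ref{PropUnHKR} to Proposition~\ref{PropUnAnalyticHKR}. Your treatment of the sufficiency in~(1) is fine. For the necessity in~(1) you correctly isolate the nontrivial point (identifying the deformation obstruction for $N\ee_{X/Y}$ on $\bax$ with the first nontrivial formality obstruction for $\oox\lltensr{}_{\ooy}\oox$) and, like the paper, you defer to \cite{AC} for it; that is appropriate here, since the paper itself makes no attempt to reproduce that argument.
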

\begin{remark}
$\he$\par
 \begin{enumerate}
  \item [(1)] If the Atiyah sequence (\ref{EqUnAnalyticHKR}) splits, then any retraction $\sigma $ of $\ba{j}$ allows to produce an extension of $N\ee_{X/Y}$ on $\bax$, namely $\sigma \ee\be N\ee_{X/Y}$.
\item[(2)] This theorem appears in \cite{AC} only when $\smash[b]{\kk_{0}\he=\oo\baxx} $ and $\smash[b]{\kk_{-n}\he=\bigo{\oo_{\ba{X}\he}}{n}\kk_{1}\he} $ for $n\ge 1$ (which corresponds to the untwisted case), but the proof remains unchanged $^{\vphantom{\bigl)}}$ under these slightly more general hypotheses.
\end{enumerate}
\end{remark}
Assume now that (\ref{EqUnAnalyticHKR}) splits, and let $\sigma $ be a retraction of $\ba{j}$. Then, if $\smash[b]{\bigl( \kk_{-n}\he\bigr)_{n\ge 0}\he} $ is a sequence of locally-free sheaves on $\bax^{\vphantom{\bigl)}}$ extending $\smash[b]{\bigl(\bigo{}{n}N\ee_{X/Y}\bigr)_{n\ge 0}\he} $, each exact sequence (\ref{EqTroisTwisted}) defines (via $\sigma $) an extension class $\mu _{n}\he$ in ${\textrm{Ext}_{\oox}^{1{\vphantom{\bigl)}}}\bigl(\bigo{}{n} N\ee_{X/Y}, \bigo{}{n+1} N\ee_{X/Y} \bigr)}$.
\par \medskip
\begin{theorem}\label{PropTroisTwisted}
Let $(X, \sigma)$ be a quantized analytic cycle of codimension $r$ in a complex manifold $Y$\!, let $\bigl( \kk_{-n}\he\bigr)_{n\ge 0}\he$ be a sequence of locally-free sheaves on $\bax$ extending $\smash[t]{\bigl(\bigo{}{n}N\ee_{X/Y}\bigr)_{n\ge 0}\he},$ and let $(\mu_{n})_{n \geq 0 } \he$ be the associated extension classes in ${\bigl(}^{\vphantom{stupide}}\emph{Ext}\,_{\oox}^{1}\bigl(\bigo{}{n}N\ee_{X/Y},\bigo{}{n+1 \vphantom{)}}N\ee_{X/Y} \bigr)\bigr)_{n \geq 0}$.
\par \bigskip
For every integer $n$ between $0$ and $r-1$, let $\lambda _{n}$ be any element in
$\emph{Ext}\,_{\oox}^{1} \bigl(\bwi{n}{}{N\ee_{X/Y}},\bwi{n+1}{}{N\ee_{X/Y}} \bigr)$ such that $\mu_{n}{\vphantom{\Bigr(}}\he$ and $\lambda _{n}\he$ map to the same extension class in $\emph{Ext}\,_{\oox}^{1 \vphantom{)}}\bigl(\bigo{}{n}N\ee_{X/Y},\bwi{n+1}{}{N\ee_{X/Y}} \bigr)$ via the antisymmetrization morphisms.
\par \bigskip
Then
$\Gamma _{\kk {\vphantom{\bigl)}}}=\Gamma _{\sigma ,\,\lambda _{0}\he,\dots,\,\lambda _{r-1}\he}\he$.
In particular, if $\kk_{0}\he=\oo\baxx$ and $\kk_{-n}\he=\bigo{\oo_{\ba{X}\he}}{n}\sigma \ee\be N\ee_{X/Y}$ for $n\ge 1$, then $\Gamma _{\kk {\vphantom{\bigl)}}}=\Gamma_{\sigma} \he$.
\end{theorem}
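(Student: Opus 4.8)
The plan is to carry over to the twisted analytic setting the argument proving Proposition~\ref{PropUnArinkinCalda}. Everything will follow once one has produced a morphism of complexes of $\oo\baxx$-modules $\zeta\colon\kk\to\pp_{\sigma ,\,\lambda _{0},\dots,\,\lambda _{r-1}}$ which commutes with the augmentations to $\oox$ and which, after applying $\oox\otimes_{\ooy}(-)$, becomes on the $i$-th graded piece the antisymmetrization map $\mathfrak{a}_{i}$ of (\ref{EqUnPrelim}) from $\bigo{}{i}N\ee_{X/Y}$ onto $\bwi{i}{}{N\ee_{X/Y}}$. Such a $\zeta$ is automatically a quasi-isomorphism, since $\kk$ and $\pp_{\sigma ,\,\lambda _{0},\dots,\,\lambda _{r-1}}$ both resolve $\oox$. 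Granting it, the commutative diagram used in the proof of Proposition~\ref{PropUnArinkinCalda}, with $\kk$, $\pp_{\sigma ,\,\lambda _{0},\dots,\,\lambda _{r-1}}$ and $\zeta$ replacing $K$, $P$ and $\zeta$, relates the composition defining $\Gamma_{\kk}$ in Theorem~\ref{PropDeuxTwisted} to the one defining $\Gamma_{\sigma ,\,\lambda _{0},\dots,\,\lambda _{r-1}}$: its vertical arrows are the maps induced by $\zeta$ on the relevant derived and underived tensor products over $\ooy$, the derived one being an isomorphism and $\oox\otimes_{\ooy}\zeta$ being, under the canonical identifications, exactly the map $\bop_{i=0}^{r}\mathfrak{a}_{i}$ occurring in the definition of $\Gamma_{\kk}$. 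Hence $\Gamma_{\kk}=\Gamma_{\sigma ,\,\lambda _{0},\dots,\,\lambda _{r-1}}$.

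The construction of $\zeta$ is carried out degree by degree. For each $n$, the sequence (\ref{EqTroisTwisted}) presents $\kk_{-n}$ as an extension of $\bigo{}{n}N\ee_{X/Y}$ by $\bigo{}{n+1}N\ee_{X/Y}$ with class $\mu _{n}$, while (\ref{EqUnTwisted}) presents the $(-n)$-th term $\bwi{n}{\sigma ,\,\lambda _{n}}{\oo\baxx}$ of $\pp_{\sigma ,\,\lambda _{0},\dots,\,\lambda _{r-1}}$ as an extension of $\bwi{n}{}{N\ee_{X/Y}}$ by $\bwi{n+1}{}{N\ee_{X/Y}}$ with class $\lambda _{n}$. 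Since $\bigo{}{n}N\ee_{X/Y}$ and $\bwi{n+1}{}{N\ee_{X/Y}}$ are locally free over $\oox$, the group $\ext^{1}_{\oox}(\bigo{}{n}N\ee_{X/Y},\bwi{n+1}{}{N\ee_{X/Y}})$ equals $\textrm{H}^{1}(X,\hoo_{\oox}(\bigo{}{n}N\ee_{X/Y},\bwi{n+1}{}{N\ee_{X/Y}}))$, so comparing classes in it amounts to comparing extensions. The hypothesis that $\mu _{n}$ and $\lambda _{n}$ have the same image under the antisymmetrization morphisms says precisely that the pushout of the first extension along $\mathfrak{a}_{n+1}$ is isomorphic, as an extension, to the pullback of the second along $\mathfrak{a}_{n}$; equivalently, that $\mathfrak{a}_{n+1}$ extends to an $\oo\baxx$-linear map $\zeta_{-n}\colon\kk_{-n}\to\bwi{n}{\sigma ,\,\lambda _{n}}{\oo\baxx}$ whose reduction modulo $N\ee_{X/Y}$ is $\mathfrak{a}_{n}$. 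In the untwisted local model this recovers the formula $\zeta_{-p}(\ubi,\ubj)=(\mathfrak{a}_{p+1}(\ubi),\mathfrak{a}_{p}(\ubj))$ of \S~\ref{SectionArikinCalda}. As $\mathfrak{a}_{0}=\id$, any such family commutes with the augmentations; but each $\zeta_{-n}$ is pinned down only up to a section of $\hoo_{\oox}(\bigo{}{n}N\ee_{X/Y},\bwi{n+1}{}{N\ee_{X/Y}})$.

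The main obstacle is to choose the $\zeta_{-n}$ so that they commute with the differentials $\nu$ of $\kk$ and $n\,d_{n+1}$ of $\pp_{\sigma ,\,\lambda _{0},\dots,\,\lambda _{r-1}}$, that is, so that they form a morphism of complexes; here one must exploit the freedom in each $\zeta_{-n}$. The strategy mimics the proof of Proposition~\ref{PropUnTwisted}: cover $X$ by opens $U_{\alpha}$ on which $N\ee_{X/Y}$ carries a holomorphic connection and on which the sequences (\ref{EqTroisTwisted}) and (\ref{EqUnTwisted}) split; in the resulting local splittings, $\kk|_{U_{\alpha}}$ and $\pp_{\sigma ,\,\lambda _{0},\dots,\,\lambda _{r-1}}|_{U_{\alpha}}$ are identified with the untwisted local Arinkin--C\u{a}ld\u{a}raru and Atiyah--Kashiwara complexes of \S~\ref{LocHKR} and \S~\ref{SectionArikinCalda} (in particular $\mu _{n}$ and $\lambda _{n}$ restrict to zero there, so the content of the hypothesis lies in the transition data), so the formula above defines a genuine chain quasi-isomorphism $\zeta^{\alpha}$ over $U_{\alpha}$. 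On the overlaps, $\zeta^{\alpha}-\zeta^{\beta}$ is a $1$-cochain valued in the subsheaves $\hoo_{\oox}(\bwi{p}{}{N\ee_{X/Y}},\bwi{p+1}{}{N\ee_{X/Y}})$ of $\mathcal{A}ut_{\oo\baxx}(\bwi{p+1}{\sigma \he}{\oo\baxx})$, controlled by the transition data of the $\kk_{-n}$ and of the $\bwi{p+1}{\sigma ,\,\lambda _{p}}{\oo\baxx}$ --- that is, by $\mu _{n}$ and $\lambda _{n}$ together with the Atiyah classes of the $\bwi{p}{}{N\ee_{X/Y}}$, exactly as in Proposition~\ref{PropUnTwisted} --- and the compatibility of $\mu _{n}$ and $\lambda _{n}$ forces this cochain to be a coboundary. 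Correcting the $\zeta^{\alpha}$ by the corresponding $0$-cochain yields the sought global morphism of complexes $\zeta$. This patching step is where essentially all the work lies.

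Finally, the last assertion requires none of the preceding. When $\kk_{0}\he=\oo\baxx$ and $\kk_{-n}\he=\bigo{\oo_{\ba{X}\he}}{n}\sigma \ee\be N\ee_{X/Y}$ for $n\ge 1$, the resolution $(\kk,\nu )$ is, through the algebra isomorphism $\psi _{\sigma }\colon\bb\xrightarrow{\ \sim\ }\oo\baxx$, the sheafification of the local Arinkin--C\u{a}ld\u{a}raru complex of \S~\ref{SectionArikinCalda} attached to the pair $(N\ee_{X/Y},\oox)$; the morphism $\zeta$ above is then the sheafification of the local $\zeta$ of that section, and $\Gamma_{\kk}=\Gamma_{\sigma}$ follows at once from the sheafified form of Proposition~\ref{PropUnArinkinCalda}.
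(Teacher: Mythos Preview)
Your strategy is the same as the paper's: build an $\oo\baxx$-linear morphism of complexes $\zeta\colon\kk\to\pp_{\sigma,\lambda_0,\dots,\lambda_{r-1}}$ over $\oox$ that reduces to $\bigoplus_i\mathfrak{a}_i$ after $\oox\oti_{\ooy}(-)$, and then run the commutative square from the proof of Proposition~\ref{PropUnArinkinCalda}. The paper organizes things slightly differently---it treats the untwisted case (all $\mu_n=0$) first, writing $\zeta$ by an explicit global formula, and then disposes of the general case in one line by saying the hypothesis yields a $\zeta$ locally isomorphic to that one---but the content is identical.

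Where you go astray is in the paragraph on the ``main obstacle''. The chain-map condition is \emph{not} an obstacle: it holds automatically for \emph{any} family $(\zeta_{-n})$ satisfying your extension diagrams. Both differentials factor through the middle term of the relevant short exact sequence: up to the normalizing scalars, $\nu_{-n}$ is $i_{n-1}\circ\pi_n$, and the differential of $\pp_{\sigma,\lambda_0,\dots,\lambda_{r-1}}$ in degree $-n$ is the composite of the projection onto $\bwi{n}{}{N\ee_{X/Y}}$ with the inclusion of $\bwi{n}{}{N\ee_{X/Y}}$ into the next term. The two equations built into your morphism of extensions, namely $\zeta_{-(n-1)}\circ i_{n-1}=(\textrm{incl})\circ\mathfrak{a}_n$ and $(\textrm{proj})\circ\zeta_{-n}=\mathfrak{a}_n\circ\pi_n$, then give
\[
\zeta_{-(n-1)}\circ i_{n-1}\circ\pi_n=(\textrm{incl})\circ\mathfrak{a}_n\circ\pi_n=(\textrm{incl})\circ(\textrm{proj})\circ\zeta_{-n},
\]
which is exactly the required commutation with the differentials. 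So the freedom in each $\zeta_{-n}$ need not be exploited, the local patching step you call ``where essentially all the work lies'' can be deleted, and the proof ends where your second paragraph does: the global existence of each $\zeta_{-n}$ (as an $\oo\baxx$-linear map) is precisely what the compatibility of $\mu_n$ and $\lambda_n$ provides.
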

\begin{proof}
 We start with the case $\kk_{0}\he=\oo\baxx$ and $\kk_{-n}\he=\bigo{\oo_{\ba{X}\he}}{n}\sigma \ee\be N\ee_{X/Y}$ for $n\ge 1$, so that all the classes $\mu_n$ vanish. Then for every nonne\-ga\-tive integer $n$, there exists a natural morphism $\smash[b]{\apl{\zeta _{n}\he \, }{\, \kk_{-n}\he}{(\pp_{\sigma})_{-n}\he}} $ given $^{\vphantom{\bigl)}}$by the composition
\[
\xymatrix@C=30pt{
\zeta _{n}\he\, :\, \bigo{\oo_{\ba{X\vphantom{\vert}}}}{n}\sigma \ee\be{N\ee_{X/Y}}\ar[r]&\bwi{n}{\oo_{\ba{X\vphantom{\vert}}\he}}{\sigma \ee\be{N\ee_{X/Y}}}=\oo\baxx\oti_{\oox}\he\bwi{n}{}{{N\ee_{X/Y}}}\ar[r]&\bwi{n+1}{\sigma }{{\oo\baxx}}.
}
\]
where the last arrow is induced by the map $x \fl \sigma^{*}(1) \we x$. Then $\smash[b]{\apl{\zeta }{\kk}{\pp_{\sigma }\he}} $ is a morphism of complexes, which is a global version of the morphism $\zeta $ constructed in  \S~\,\ref{SectionArikinCalda}, and we can argue exactly as in Proposition \ref{PropUnArinkinCalda}.
\par\medskip
In the twisted case, our hypothesis implies that there is a morphism $\smash{\apl{\zeta }{\kk}{\pp_{\sigma ,\,\lambda _{0}\he,\dots,\,\lambda _{r-1}\he}\he}}$ which is locally isomorphic to the previous one. Details are left to the reader.
\end{proof}
\subsection{Comparison of HKR isomorphisms} \label{comparison}
Let $(X, \sigma )$ be a quantized analytic cycle in a complex manifold $Y$\!, and fix two sequences of cohomology classes $\smash[b]{\ub{\lambda \be}=(\lambda _{p}\he)_{0\le p\le r-1}\he} $ and $\smash[b]{\ub{\mu \be}=(\mu _{p}\he)_{0\le p\le r-1}\he} $ such that for each $p$, $\lambda_p$ and $\mu_p$  belong to $\textrm{Ext}\,^{1 \vphantom{\bigl)}}_{\oox}(\bwi{p}{}{N\ee_{X/Y}},\bwi{p+1}{}{N\ee_{X/Y}})$. If $\pp_{\sigma ,\,\ubl } {\vphantom{\Bigl)}} $ and $\pp_{\sigma ,\,\ubm}{\vphantom{\bigl )}}\he$ are the twisted AK
complexes associated $^{\vphantom{\bigl)}}$with $\ubl $ and $\ubm $, the isomorphism
$\xymatrix{\varphi _{\ubm ,\,\ubl }\he:\,\pp_{\sigma ,\,\ubl }\he\ar[r]^-{\sim}&\oox&\pp_{\sigma ,\,\ub{\mu\be}}\he\ar[l]_-{\sim}}$ in $D^{\,\textrm{b}}\be(\ooy)$ induces an automorphism $\Delta_{\sigma} (\ubm ,\, \ubl)$ of $\bop\nolimits_{i=0}^{r}\bwi{i}{}{N\ee_{X/Y}}\,[i]$ in $D^{\,\textrm{b}}\be(\oo_{X}\he)$, as shown in the following diagram:
\[
\xymatrix@C=30pt@C=30pt{
\bop_{j=0}^{r}\bwi{j}{}{{N\ee_{X/Y}}[j]}\ar@<1ex>@{}[r]_-{\ds=}\ar[d]^-{\sim}_-{\Delta_{\sigma} (\ubm ,\, \ubl)}&\oox\oti_{\ooy }\he\pp_{\sigma ,\,\ubl }\he&\oox\,\lltens{}_{\ooy }\he\pp_{\sigma ,\,\ubl }\he\ar[l]_-{\sim}\ar[d]_-{\sim}^-{\id\,\lltens{}_{\ooy }\he\varphi _{\ubm ,\,{\ubl }}}\\
\bop_{i=0}^{r}\bwi{i}{}{{N\ee_{X/Y}}[i]}\ar@<1ex>@{}[r]_-{\ds=}&\oox\oti_{\ooy }\he\pp_{\sigma ,\,\ub{\mu  \be}}\he&\oox\,\lltens{}_{\ooy }\he\pp_{\sigma ,\,\ubm }\he\ar[l]_-{\sim}
}
\]
If we put $\Delta_{\sigma}(\ubl)=\Delta_{\sigma}(0, \ubl)$, then $\Delta_{\sigma}(\ubm, \ubl)=\Delta_{\sigma}(\ubm)^{-1}\be \circ \Delta_{\sigma}(\ubl).$
Recall that
\[
\textrm{Hom}_{D^{\textrm{b}}\be(\oox)}\he\Bigl[ \bop\limits_{j=0}^{r}\bwi{j}{}{{N\ee_{X/Y}}[j]},\,\bop\limits_{i=0}^{r}\bwi{i}{}{{N\ee_{X/Y}}[i]}\Bigr]=\bop_{0\le j \le i\le r}\he \textrm{Ext}^{i-j}_{\oox}\bigl( \bwi{j}{}{{N\ee_{X/Y}}},\bwi{i}{}{{N\ee_{X/Y}}}\bigr).
\]
Therefore, an endomorphism of $\bop_{i=0}^{r}\bwi{i}{}{{N\ee_{X/Y}}[i]}$ in the derived category $D^{\textrm{b}}\be(\oox)$ can be represented by a lower triangular $(r+1)\tim(r+1)$ matrix $\bigl( M_{i,\,j}\he \bigr)_{0\le i,\,j\le r}\he$ such that for $i\ge j$, the entry $M_{i,\,j}\he$ is a cohomology $^{\vphantom{\bigl)}}$class in $ \textrm{Ext}^{i-j}_{\oox}(\bwi{j}{\oox}{{N\ee_{X/Y}}}, \bwi{i}{}{{N\ee_{X/Y}}}).$
\par \bigskip

The computation of the coefficients $\Delta _{\sigma }\he(\ubm,\ubl)_{i, \, j}$ seems to be a delicate problem. We solve it only in particular cases. Let us introduce some preliminary material.
\par\medskip
For any integers $i$ and $j$ such that $0\le j\le i\le r$ and any cohomology class $v$ in $\textrm{H}^{i-j}\be\bigl(X,\bwi{i-j}{}{{N\ee_{X/Y}}}\bigr)$ considered as an element of $\textrm{Hom}_{D^{\textrm{b}}\be(\oox)}\he(\oox, \,\bwi{i-j}{}{{N\ee_{X/Y}}}[i-j])$, we define a morphism $\mathfrak{l}_{i,\,j}\he(v)$ from $\bwi{j}{}{{N\ee_{X/Y}}}[j^{{\vphantom{\bigl)}}}]$ to $\bwi{i}{}{{N\ee_{X/Y}}}[i]$ in $D^{\textrm{b}^{\vphantom{A}}}\be(\oox)$ by the compostion
\[
\xymatrix@C=40pt{\mathfrak{l}_{i,\,j}\he(v) \colon \bwi{j}{}{{N\ee_{X/Y}}}[j] \ar[r]^-{v \, \lltens{}_{\oox} \,\textrm{id}}&\bwi{i-j}{}{{N\ee_{X/Y}}}[i-j] \, \lltens{}_{\oox} \he \bwi{j}{}{{N\ee_{X/Y}}}[j] \ar[r]^-{\wedge}&\bwi{i}{}{{N\ee_{X/Y}}}[i].
}
\]
\par \medskip
In this way, we obtain a morphism
\[
\apl{\, \mathfrak{l}_{i,\,j}\he \, }{\textrm{H}^{i-j}\be\bigl(X,\bwi{i-j}{}{{N\ee_{X/Y}}}\bigr)}{\textrm{Ext}^{i-j}_{\oox}(\bwi{j}{\oox}{{N\ee_{X/Y}}}, \bwi{i}{}{{N\ee_{X/Y}}}).}
\]
\par \medskip
If $\,*\,$ denotes the Yoneda product, for any integers $i,j_{\vphantom{\bigl)}},k$ such that $0 \leq k\le j\le i\le r$ and any cohomology classes $v$ and $w$ in $\textrm{H}^{i-j}\be\bigl( X,\bwi{i-j}{}{{N\ee_{X/Y}}}\bigr)$ and $\textrm{H}^{j-k}\be\bigl( X,\bwi{j-k}{}{{N\ee_{X/Y}}}\bigr)_{\vphantom{stupide}}$ respectively,  we have
\[
\mathfrak{l}_{i,\,j}\he(v)*\mathfrak{l}_{j,\,k}\he(w)=(-1)^{(i-j)(j-k)}\be \,\mathfrak{l}_{i,\,k}\he(v \cup w).
\]
\par \bigskip
We introduce some notation concerning \v{C}ech cohomology. Let $\mathfrak{U}=(U_{\alpha }\he)_{\alpha \in J}\he$ be a locally finite open covering of $X$. For any bounded complex of sheaves $(\ff, d)$ on $X$, we denote by $(\mathscr{C}(\ff),\delta, d )$ the associated \v{C}ech bicomplex, which is quasi-isomorphic to $(\ff, d)$. Besides, we denote by $\we$ the wedge product on the exterior algebra $\bwi{}{}N\ee_{X/Y}$ at the level of \v{C}ech cochains. It is given by the well-known formula:
\[
\apl{\, \we \, }{\,\mathscr{C}^{p}\be\bigl( \bwi{k}{}{{N\ee_{X/Y}}}\bigr)\times\mathscr{C}^{q}\be\bigl( \bwi{l}{}{{N\ee_{X/Y}}}\bigr)}{\mathscr{C}^{p+q}\be\bigl( \bwi{k+l}{}{{N\ee_{X/Y}}}\bigr)}
\]
\[
(\eta \we \eta')_{{\alpha'}_{0}\he,\dots,\,\alpha _{p+q}\he}\he=u_{\alpha_{0}\he,\dots,\,\alpha _{p}\he}\he\we {\eta'}_{\alpha _{p}\he,\dots,\,\alpha _{p+q}\he}\he.
\]
\par \bigskip
Let $v$ be a cohomology class in $\textrm{H}^{i-j}\be\bigl( X,\bwi{i-j}{}{{N\ee_{X/Y}}}\bigr)$. Since $X$ is paracompact, we can choose the covering $\mathfrak{U}$ sufficiently fine in order that $v$ be representable by a \v{C}ech cocycle $(\mathfrak{v}_{\alpha }\he)_{\alpha\, \in \,J^{i-j+1}}\he.$
\par \medskip
Define $\smash[b]{\apl{\, \mathfrak{q}_{i,\,j}\, \he(\mathfrak{v}) \,}{\mathscr{C}\bigl( \bwi{j}{}{{N\ee_{X/Y}}[j]}\bigr)}{{\mathscr{C}\bigl( \bwi{i}{}{{N\ee_{X/Y}}[i]}\bigr)}}} $
by the formula
$\mathfrak{q}_{i,\,j}\he(\mathfrak{v})(\eta )=(-1)^{(i-j)\deg(\eta)}\be\, \mathfrak{v}\we \eta
$, where $\deg(\eta )$ denotes the degree of the \v{C}ech cochain $\eta $. Then $\mathfrak{q}_{i,\,j}\he(\mathfrak{v})$ is a morphism of complexes and the diagram
\[
\xymatrix@C=40pt@R=30pt{
\bwi{j}{}{{N\ee_{X/Y}}[j]}\quad\ar[r]^-{\mathfrak{l}_{i,\,j}\he(v)}\ar[d]_-{\sim}&\bwi{i}{}{{N\ee_{X/Y}}[i]}\quad\ar[d]^-{\sim}\\
\mathscr{C}\bigl( \bwi{j}{}{{N\ee_{X/Y}}[j]}\bigr)\ar[r]^-{\mathfrak{q}_{i,\,j}\he(\mathfrak{v})}&\mathscr{C}\bigl( \bwi{i}{}{{N\ee_{X/Y}}[i]}\bigr)
}
\]
commutes in $D^{\textrm{b}}\be(\oox)$.
\begin{theorem}\label{ThUnCompHKR}
 Let $(X, \sigma)$ be a quantized analytic cycle of codimension $r$ in a complex manifold $Y$\!. Fix two sequences $(c_{n}\he)_{\,0\le n\le r}\he$ and $(d_{n}\he)_{\,0\le n\le r}\he$ of cohomology classes in $\smash[t]{\emph{H}^{1_{\vphantom{)}}}\be\bigl(X, N\ee_{X/Y}\bigr)}^{\he }$.
\par\medskip
 For any integer $p$ between $0$ and $r-1$, let $\lambda _{p}\he$ and $\mu _{p}\he$ be defined in $\emph{Ext}^{1}_{\oox}\bigl(\bwi{p}{}{{N\ee_{X/Y}}},\bwi{p+1}{}{{N\ee_{X/Y}}}\bigr)$ by $\lambda _{p}\he=\mathfrak{l}_{p+1,\,p}\he(c_{p}\he)$ and $\mu _{p}\he=\mathfrak{l}_{p+1,\,p}\he(d_{p}\he)$.
\par\medskip
 Then for any integers $i$ and $j$ such that $0\le j\le i\le r$, we can write $\Delta _{\sigma }\he(\ubm,\ubl)_{i,\,j}\he=\mathfrak{l}_{i,\,j} (\zeta _{i,\,j})\he$, where the classes $\zeta _{i,\,j}\he$ are defined inductively by
\begin{align*}
 \zeta _{i,\,i}\he&=1 &&\textrm{for} \quad 0 \leq i \leq r, \\ \zeta_{i+1,\,0}&=(-1)^i (c_0-d_i)\cup \zeta_{i,\, 0} &&\textrm{for} \quad 0 \leq i \leq r-1,\\[-1ex]
\zeta _{i+1,\,j}\he&=\dfrac{1}{i+1} \bigl[ j\,\zeta _{i,\,j-1}\he + (-1)^{i-j}\be(c_{j}\he-d_{i}\he)\cup\zeta _{i,\,j}\bigr] &&\textrm{for} \quad 1 \leq j \leq i \leq r-1.
\end{align*}
\end{theorem}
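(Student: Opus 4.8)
The plan is to compute $\Delta_{\sigma}(\ubm,\ubl)$ by producing explicit \v{C}ech-level representatives, using the \emph{twisted Arinkin--C\u{a}ld\u{a}raru complexes} of \S~\ref{TwistedCase} as the resolutions, since --- unlike the Atiyah--Kashiwara complexes themselves --- they compute $\oox\lltens{}_{\ooy}(-)$ correctly. First I would fix a locally finite open covering $\mathfrak{U}=(U_{\alpha})_{\alpha\in J}$ of $X$ fine enough that $N\ee_{X/Y}$ carries a holomorphic connection on each $U_{\alpha}$ and that $c_{p}$, $d_{p}$ are represented by \v{C}ech $1$-cocycles $\mathfrak{c}_{p},\mathfrak{d}_{p}$ with values in $N\ee_{X/Y}$. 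By Definition \ref{defUnTwisted} and Proposition \ref{PropUnTwisted} one builds from the two data twisted Arinkin--C\u{a}ld\u{a}raru complexes $\kk^{\,\underline{c}}$ and $\kk^{\,\underline{d}}$, locally isomorphic over $U_{\alpha}$ to the untwisted complex $\bop_{n\ge 0}\bigo{\mathcal{O}_{\bar{X}}}{n}\sigma\ee N\ee_{X/Y}$ and glued by the unipotent transitions obtained by wedging with $\mathfrak{c}_{n}$, resp.\ $\mathfrak{d}_{n}$; their associated extension classes then antisymmetrise exactly to $\lambda_{n}=\mathfrak{l}_{n+1,\,n}(c_{n})$, resp.\ to $\mu_{n}=\mathfrak{l}_{n+1,\,n}(d_{n})$. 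By Theorem \ref{PropTroisTwisted} this yields $\Gamma_{\kk^{\,\underline{c}}}=\Gamma_{\sigma,\,\ubl}$ and $\Gamma_{\kk^{\,\underline{d}}}=\Gamma_{\sigma,\,\ubm}$, so that, unravelling the diagram defining $\Delta_{\sigma}$, the map to be computed is $\Delta_{\sigma}(\ubm,\ubl)=\Gamma_{\kk^{\,\underline{d}}}\circ\Gamma_{\kk^{\,\underline{c}}}^{-1}$.

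The core of the proof is the construction of a morphism $H$ comparing the two resolutions and lifting $\id_{\oox}$. A strict $\ooy$-linear chain map $\kk^{\,\underline{c}}\to\kk^{\,\underline{d}}$ need not exist (the obstructions sit in $\emph{Ext}^{1}$-groups over $\mathcal{O}_{\bar{X}}$ that are nonzero in general), so I would work throughout with the \v{C}ech total complexes relative to $\mathfrak{U}$, whose terms are $\Gamma(\mathfrak{U},-)$-acyclic: there the two resolutions share the same underlying bigraded sheaf through the local trivialisations, the twists being absorbed into the \v{C}ech differential, and $H$ can be produced cochain by cochain. Since $\oox\oti_{\ooy}(-)$ annihilates the internal differentials of $\kk^{\,\underline{c}}$ and $\kk^{\,\underline{d}}$ (the image of each $i_{n-1}$ equals $N\ee_{X/Y}\cdot\kk_{-(n-1)}$, so $\oox\oti_{\ooy}\kk^{\,\underline{c}}_{-n}\simeq\bigo{}{n}N\ee_{X/Y}$), passing $\oox\oti_{\ooy}H$ through the antisymmetrisation maps $\mathfrak{a}_{i}$ --- which intertwine all the relevant structure, cf.\ Proposition \ref{PropUnArinkinCalda} and Theorem \ref{PropTroisTwisted} --- represents, for $0\le j\le i\le r$, the $(i,j)$-component of $\Delta_{\sigma}(\ubm,\ubl)$ by a \v{C}ech morphism of the form $\mathfrak{q}_{i,\,j}(\mathfrak{z}_{i,\,j})$ for suitable \v{C}ech cochains $\mathfrak{z}_{i,\,j}$. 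The asserted recursion is then nothing but the translation of the condition ``$H$ is a morphism of total complexes'' into these components: the coefficient $\tfrac{1}{i+1}$ comes from the $\tfrac{1}{(i+1)!}$ in $\nu_{-(i+1)}=\tfrac{1}{(i+1)!}\,i_{i}\circ\pi_{i+1}$ weighed against the normalisation of the degree-$i$ antisymmetrisation, the factor $j$ counts the tensor slots on which the wedging acts, the class $c_{j}-d_{i}$ records the difference $[\mathfrak{c}_{j}]-[\mathfrak{d}_{i}]$ of the twisting cocycles entering at the corresponding stage (after the cancellations provided by the local connections $\nabla_{\alpha}$), and the initial condition $\zeta_{i,i}=1$ reflects that $H$ is the identity on the associated graded.

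It then remains to pass from the cochains $\mathfrak{z}_{i,\,j}$ to the classes $\zeta_{i,\,j}=[\mathfrak{z}_{i,\,j}]$. By the commuting square stated just before the theorem, $\mathfrak{q}_{i,\,j}(\mathfrak{z}_{i,\,j})$ represents $\mathfrak{l}_{i,\,j}(\zeta_{i,\,j})$ in $D^{\textrm{b}}(\oox)$ once $\mathfrak{z}_{i,\,j}$ is checked to be a cocycle, and the cochain-level recursion descends to the asserted one for the $\zeta_{i,\,j}$; the compatibility with composition in $D^{\textrm{b}}(\oox)$ --- which in particular fixes the signs $(-1)^{i}$ and $(-1)^{i-j}$ --- is governed by the Yoneda identity $\mathfrak{l}_{i,\,j}(v)*\mathfrak{l}_{j,\,k}(w)=(-1)^{(i-j)(j-k)}\,\mathfrak{l}_{i,\,k}(v\cup w)$ recorded above. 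This gives $\Delta_{\sigma}(\ubm,\ubl)_{i,\,j}=\mathfrak{l}_{i,\,j}(\zeta_{i,\,j})$, as claimed.

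The main obstacle is not conceptual but combinatorial: one must propagate the factorial and binomial coefficients of the Arinkin--C\u{a}ld\u{a}raru differential through the iterated antisymmetrisation maps while tracking every Koszul sign, and it is exactly this bookkeeping that produces the precise shape of the recursion (the $\tfrac{1}{i+1}$, the $j$, and the signs). I expect this to require a careful simultaneous induction on $i$ and on the number of normal directions; it is routine in principle but genuinely delicate, and is the only non-formal ingredient.
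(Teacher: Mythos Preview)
Your overall architecture---build a morphism between resolutions of $\oox$ at the \v{C}ech level lifting $\id_{\oox}$, tensor with $\oox$, and read off the matrix entries as $\mathfrak{q}_{i,j}$ of explicit cochains---is correct and is exactly the paper's strategy. The differences lie in the choice of resolution and the direction of the argument.

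The paper works directly with the twisted AK complexes $\pp_{\sigma,\ubl}$ and $\pp_{\sigma,\ubm}$, not with Arinkin--C\u{a}ld\u{a}raru complexes. Your stated reason for preferring the latter---that, unlike the AK complexes, they ``compute $\oox\lltens{}_{\ooy}(-)$ correctly''---is a misreading: Proposition~\ref{PropUnAnalyticHKR} (and its twisted form) asserts precisely that $\oox\lltensr{}_{\ooy}\pp_{\sigma,\ubl}\to\oox\oti_{\ooy}\pp_{\sigma,\ubl}$ is an isomorphism. Neither complex consists of $\ooy$-flat sheaves (both live over $\oo\baxx$), so neither has an a priori advantage here. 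Staying with the AK complexes avoids the detour through Theorem~\ref{PropTroisTwisted}, the unboundedness of the AC complex, and the antisymmetrisation layer. The paper also reverses your order of discovery: rather than building $H$ and deriving the recursion from the chain-map condition, it \emph{defines} cochains $\eta_{i,j}$ by the stated recursion, writes down explicit maps $S_{-n,l,\uba}(\ubi,\ubj)=\bigl((-1)^{l}\eta_{n+l,n,\uba}\wedge\ubi,\ \eta_{n+l,n,\uba}\wedge\ubj\bigr)$ on the locally trivialised pieces, conjugates by the trivialisations $\varphi_{n,\alpha}$, $\psi_{n+l,\alpha}$ to obtain $T_{-n,l,\uba}$, and then checks that the resulting $T\colon\pp_{\sigma,\ubl}\to\mathscr{C}(\pp_{\sigma,\ubm})$ is a chain map. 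In that model the factors $\tfrac{1}{i+1}$ and $j$ come transparently from the AK differentials $\hat{d}_{-p}=p\,d_{p+1}$ on target and source, not from the $\tfrac{1}{(i+1)!}$ in the AC differential balanced against antisymmetrisation; your heuristic for the coefficients is tied to a model the paper does not use and would need its own verification if you pursued the AC route. The term $c_j-d_i$ appears through the \v{C}ech differential $\delta$ interacting with the transition functions of the two twists. In short, your route is viable, but the paper's is shorter: bounded complexes, no antisymmetrisation, and the recursion is posited and checked rather than extracted.
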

\begin{proof}
We choose a locally finite covering $\mathfrak{U}=(U_{\alpha }\he)_{\alpha \,\in\,J}\he$ of $X$ such that for each integer $n$ between $0$ and $r-1$, the classes $c_{n}\he$ and $d_{n}\he$ are representable by \v{C}ech cocycles $(\mathfrak{c}_{n,\,\alpha ,\,\beta }\he)_{\alpha ,\,\beta \,\in\,J}\he$ and $(\mathfrak{d}_{n,\,\alpha ,\,\beta }\he)_{\alpha ,\,\beta \,\in\,J}\he$ in
$\mathscr{C}^{1}\be(X,{N\ee_{X/Y}})$. For any $\alpha $ in $J$, we fix isomorphisms
\[
\xymatrix@C=40pt@R=10pt{
\varphi _{n,\,\alpha }\he\,:\,\bwi{n+1}{\sigma ,\,\lambda _{n}\he}{\oo_{\ba{X\vphantom{X'}}\vert U_{\alpha }\he}}\ar[r]^-{\sim}&\bwi{n+1}{\sigma }{\oo_{\ba{X\vphantom{X'}}\vert U_{\alpha }\he}} \quad \textrm{and} \quad
\psi _{n,\,\alpha }\he\,:\,\bwi{n+1}{\sigma ,\,\mu _{n}\he}{\oo_{\ba{X\vphantom{X'}}\vert U_{\alpha }\he}}\ar[r]^-{\sim}&\bwi{n+1}{\sigma }{\oo_{\ba{X\vphantom{X'}}\vert U_{\alpha }\he}}
}
\]
such that for any $\alpha $, $\beta $ in $J$, $\varphi _{n,\,\beta }\he\circ \varphi _{n,\,\alpha }^{-1}$ and $\psi _{n,\,\beta }\he\circ \psi _{n,\,\alpha }^{-1}$
are given via the isomorphism (\ref{EqDeuxDgAlg}) by
\[
\varphi _{n,\,\beta }\he\circ \varphi _{n,\,\alpha }^{-1}\,(\ubi,\ubj)=(\ubi+\mathfrak{c}_{n,\,\alpha ,\,\beta }\he\we\ubj,\ubj)\quad\textrm{and}\quad
\psi _{n,\,\beta }\he\circ \psi _{n,\,\alpha }^{-1}\,(\ubi,\ubj)=(\ubi+\mathfrak{d}_{n,\,\alpha ,\,\beta }\he\we\ubj,\ubj).
\]
For any integers $i$, $j$ such that $0\le j\le i\le r$, we define inductively cocycles
$(\eta _{i,\,j,\,\uba}\he)_{\uba\,\in\,J^{i-j+1}}\he$ by the formulae
\[
\eta _{i,\,i}\he=1 \quad \textrm{for} \quad 0\le i\le r, \qquad
\eta _{i+1,\,0}\he=(-1)^{i}(\mathfrak{c}_{0}\he-\mathfrak{d}_{i}\he)\we\eta _{i,\,0}\he \quad \textrm{for} \quad 0\le i\le r-1,
\]
\[
\eta _{i+1,\,j}\he=\dfrac{1}{i+1}\,\bigl[ j\,\eta _{i,\,j-1}\he+(-1)^{i-j}\,(\mathfrak{c}_{j}\he-\mathfrak{d}_{i}\he)\we\eta _{i,\,j}\he\bigr]\quad \textrm{for}\quad 1\le j\le i\le r-1.
\]
\par \smallskip
Let $\ti{d}$ be the total differential of the \v{C}ech bicomplex $\mathscr{C}(\pp_{\sigma ,\,\ubm}\he)$. For any integer $k$,
\[
\mathscr{C}(\pp_{\sigma ,\,\ubm}\he)_{k}\he=\bop_{l=\max(0,\,k)}^{r+k}\mathscr{C}^{\,l}\be\bigl( \bwi{l+1-k}{\sigma ,\,\mu _{l-k}\he}{\oo\baxx}\bigr) \quad \textrm{if}\ k\ge -n \quad \textrm{and} \quad
\mathscr{C}(\pp_{\sigma ,\,\ubm}\he)_{k}\he=0 \quad \textrm{if}\ k<-n.
\]
Besides, $\ti{d}=\delta +(-1)^{l}\be\,\wh{d}_{l-k}\he$ on each $\smash[b]{\mathscr{C}^{\,l}\be\bigl( \bwi{l+1-k}{\sigma ,\,\mu _{l-k}\he}{\oo\baxx}\bigr)} $.
For any nonnegative integers $n$ and $k$ such that $n+l\le k$ and any $\alpha _{0}\he,\dots,\,\alpha _{k}^{\vphantom{\bigl)}}$ in $J$, we define two morphisms of sheaves
\[
\xymatrix@C=25pt{
S_{-n,\,l,\,\uba}\he\,:\,\bwi{n+1}{\sigma }{\oo_{\ba{X\vphantom{X'}}\vert U_{\uba }\he}}\ar[r]&\bwi{n+l+1}{\sigma }{\oo_{\ba{X\vphantom{X'}}\vert U_{\uba }\he}} \quad \textrm{and} \quad
T_{-n,\,\lambda ,\,\uba}\,:\,\bwi{n+1}{\sigma ,\,\lambda _{n}\he}{\oo_{\ba{X\vphantom{X'}}\vert U_{\uba }\he}}\ar[r]&\bwi{n+k+1}{\sigma ,\,\mu _{n+l\he}}{\oo_{\ba{X\vphantom{X'}}\vert U_{\uba }\he}}
}
\]
by the formulae
\[
 S_{-n,\,l,\,\uba}\he\,(\ubi,\ubj)=\bigl( (-1)^{l}\be\,\eta _{n+l,\,n,\,\uba}\he\we \ubi,\eta _{n+l,\,n,\,\uba}\he\we \ubj\bigr)\quad\textrm{and}\quad
T_{-n,\,l,\,\uba}\he=\psi ^{-1}_{n+l,\,\alpha _{0}\he}\circ S_{-n,\,l,\,\uba}\he\circ\varphi _{n,\,\alpha _{0}\he}\he.
\]
\par \medskip
By (\ref{EqQuatreDgAlg}), $S_{-n,\,l,\,\uba}\he$ and $T_{-n,\,l,\,\uba}\he$ are $\oo\baxx$-linear. Since the covering $\mathfrak{U}$ is locally finite, the morphisms
$\bigl( T_{-n,\,l,\,\uba}\he\bigr)_{0\le l\le n-r,\ \alpha\,\in\, J^{l+1}\be}\he$ define a morphism $\apl{T_{-n}\he}{\bwi{n+1}{\sigma ,\,\lambda _{n}\he}{\oo\baxx}}{\mathscr{C}\bigl( \pp_{\sigma ,\,\ubm}\he\bigr)_{-n}\he.}$ A tedious but straightforward computation shows that the $\bigl( T_{-n}\he\bigr)_{0\le n\le r}\he$ define an element of $\textrm{Hom}_{\oo\baxx}\he\bigl( \pp_{\sigma ,\ubl}\he,\,\mathscr{C}(\pp_{\sigma ,\,\ubm}\he)\bigr),$ so that we get the following commutative diagram in $D^{\textrm{b}}(\oox)\be$:
\[
\xymatrix@C=40pt@R=30pt{
\pp_{\sigma ,\,\ubl}\he\ar[d]_-{\sim}\ar[r]_-{T}^-{\sim}&\mathscr{C}\bigl( \pp_{\sigma ,\,\ubm}\he\bigr)\ar[d]&\pp_{\sigma ,\,\ubm }\he\ar[d]\ar[l]_-{\sim}\\ \oox\ar[r]^-{\sim}
&\mathscr{C}(\oox)
&\oox\ar[l]_-{\sim}
}
\]
This proves that the quasi-isomorphism ${\apl{\varphi _{\ubm,\,\ubl}\he}{\pp_{\sigma ,\,\ubl}\he}{\pp_{\sigma ,\,\ubm}\he}} $ is obtained by composing the two isomorphisms of the first line. Now we have another commutative diagram, namely
\[
\xymatrix@C=40pt@R=20pt{
\oox\,\lltens{}_{\ooy}\he\,\pp_{\sigma ,\,\ubl}\ar[r]^-{\id\lltensr{}_{\ooy}\he\,T}\ar[d]_-{\sim}&\oox\,\lltensr{}_{\ooy}\he\,\mathscr{C}\bigl( \pp_{\sigma ,\,\ubm}\bigr)\ar[d]&\oox\,\lltensr{}_{\ooy}\he\,\pp_{\sigma ,\,\ubm}\ar[l]_-{\sim}\ar[d]_-{\sim}\\
\oox\,\oti_{\ooy}\he\,\pp_{\sigma ,\,\ubl}\ar[r]^-{\id \oti_{\ooy} \he T}\ar@<-1.4ex>@{}^{\bigl|\!\bigl|}[d]&\oox\,\oti_{\ooy}\he\,\mathscr{C}\bigl( \pp_{\sigma ,\,\ubm}\bigr)\ar@<-1.4ex>@{}^{\bigl|\!\bigl|}[d]&\oox\,\oti_{\ooy}\he\,\pp_{\sigma ,\,\ubm}\ar[l]_-{\sim}\ar@<-1.4ex>@{}^{\bigl|\!\bigl|}[d]\\
\bop_{j=0}^{r}\bw{j}{{N\ee_{X/Y}}[j]}\ar[r]^-{\sim}&\mathscr{C}\bigl( \bop_{i=0}^{r}\bw{i}{{N\ee_{X/Y}}[i]}\bigr)&\bop_{i=0}^{r}\bw{i}{{N\ee_{X/Y}}[i]}\ar[l]_-{\sim}
}
\]
Thus $\Delta _{\sigma }\he(\ubm,\ubl)$ is obtained by composing the two isomorphisms of the last line. The first one is explicitly given by
\[
\xymatrix@C=17pt{\bw{j}{{N\ee_{X/Y}}}\ar[r]&\mathscr{C}^{i-j}\be(\bw{i}{{N\ee_{X/Y}}}),}\quad\xymatrix@C=17pt{
\ubi\ar@{|->}[r]&\eta _{\,i,\,j}\he\we\ubi\!.
}
\]
Hence $\Delta _{\sigma }\he(\ubm,\ubl)_{i, \, j}$ is equal to $\mathfrak{l}_{i,\,j}\he \, (\zeta _{i,\,j}\he)$ where $\zeta _{i,\,j}\he$ is the cohomology class of $\eta _{\,i,\,j}\he$. This finishes the proof.
\end{proof}
\begin{remark} \label{interpretation} The twist of the AK complex by classes in $\textrm{H}^1\bigl(X, N^*_{X/Y}\bigr)$ admits the following geometric interpretation: the existence of a retraction $\sigma$ of $\ba{j}$ implies that the natural sequence
\begin{equation}\label{Pic}
\xymatrix@=20pt{0 \ar[r]&\textrm{H}^{1}(X, N\ee_{X/Y}) \ar[r]&\textrm{Pic}(\,\bax) \ar[r]&\textrm{Pic}(X) \ar[r]& 0}
\end{equation}
is exact. This allows to identify $\textrm{H}^1\bigl(X, N^*_{X/Y}\bigr)$ with isomorphism classes of holomorphic line bundles on $\bax$ whose restriction on $X$ is trivial. Then for any integer $p$ between $0$ and $r-1$ and any class $\mu_p$ in $\textrm{H}^1\bigl(X, N^*_{X/Y}\bigr)$,  if $\mathcal{L}_p$ is a line bundle on $\oo\baxx$ associated with $\mu_p$ and if $\lambda_p=\be \,\mathfrak{l}_{p+1, \,p}(\mu_p)$, it follows from (\ref{EqQuatreDgAlg}) that  $^{\vphantom{\bigl)}}\bwi{p+1}{\sigma ,\,\lambda_{p}}{\oo\baxx}\simeq  \bwi{p+1}{\sigma }{\oo\baxx} \otimes_{\oo\baxx}\he \mathcal{L}\ee_p$.
\end{remark}
We compute $\Delta _{\sigma }\he(\ubm,\ubl)$ in another particular case:
\begin{theorem}\label{ModuleLunaireUn}
 Let $(X, \sigma)$ be a quantized analytic cycle of codimension $r$ in a complex manifold $Y$\!. For any integer $p$ between $0$ and $r-1$, let $\lambda _{p}\he$ and $\mu _{p}\he$ be extension classes in $\emph{Ext}^{1}_{\oox}(\bw{p} N\ee_{X/Y}, \bw{p+1} N\ee_{X/Y})$ such that $\lambda _{p}\he=\mu _{p}\he$ for $p\neq r-1$. Then
\[
\begin{cases}
 \Delta _{\sigma }\he(\ubm,\ubl)_{i,\,i}\he=1&\emph{for}\ 0\le i \le r\\
\Delta _{\sigma }\he(\ubm,\ubl)_{r,\,r-1}\he=\dfrac{1}{r} (\lambda _{r-1}\he-\mu _{r-1}\he)\\
\emph{All other coefficients}\ \Delta _{\sigma }\he(\ubm,\ubl)_{i,\,j}\he\ \emph{vanish}.
\end{cases}
\]
\end{theorem}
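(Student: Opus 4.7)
The plan is to follow the Čech-theoretic strategy used in the proof of Theorem \ref{ThUnCompHKR}, exploiting the crucial simplification that the twisted sheaves $\bwi{p+1}{\sigma, \lambda_p}{\oo\baxx}$ and $\bwi{p+1}{\sigma, \mu_p}{\oo\baxx}$ coincide as sheaves of $\oo\baxx$-modules for every $p \neq r-1$. Put $\kappa = \lambda_{r-1} - \mu_{r-1}$ in $\ext^{1}_{\oox}(\bw{r-1}{N\ee_{X/Y}}, \bw{r}{N\ee_{X/Y}})$ and choose a locally finite open covering $\mathfrak{U} = (U_\alpha)_{\alpha \in J}$ of $X$ fine enough that $\lambda_{r-1}$ and $\mu_{r-1}$ are representable by Čech $1$-cocycles $\mathfrak{l}_{\alpha, \beta}$ and $\mathfrak{m}_{\alpha, \beta}$ valued in $\hoo_{\oox}(\bw{r-1}{N\ee_{X/Y}}, \bw{r}{N\ee_{X/Y}})$. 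Fix local trivializations $\varphi_\alpha$ of $\bwi{r}{\sigma, \lambda_{r-1}}{\oo\baxx}$ and $\psi_\alpha$ of $\bwi{r}{\sigma, \mu_{r-1}}{\oo\baxx}$ over each $U_\alpha$ such that, via the isomorphism (\ref{EqDeuxDgAlg}), the transition functions read $\varphi_\beta \circ \varphi_\alpha^{-1}(\ubi, \ubj) = (\ubi + \mathfrak{l}_{\alpha, \beta}(\ubj), \ubj)$ and similarly for $\psi$ with $\mathfrak{m}$.

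Next, mimic the construction of the morphism $T \colon \pp_{\sigma, \ubl} \to \mathscr{C}(\pp_{\sigma, \ubm})$ from the proof of Theorem \ref{ThUnCompHKR}. For every $n \neq r-1$, take $T_{-n}$ to be the natural inclusion into Čech degree zero; this is legitimate because the $(-n)$-th terms of the two complexes are literally the same sheaf. For $n = r-1$, let $T_{-(r-1)}$ consist of a Čech-degree-zero component given locally by $\psi_\alpha^{-1} \circ \varphi_\alpha$ together with a Čech-degree-one correction built from the difference cocycle $\chi_{\alpha, \beta} := \mathfrak{l}_{\alpha, \beta} - \mathfrak{m}_{\alpha, \beta}$, which represents $\kappa$. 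Using (\ref{EqQuatreDgAlg}) one checks directly that $T$ is a morphism of complexes and that the diagram coupling $T$ with the quasi-isomorphisms to $\oox$ commutes, so that $\varphi_{\ubm, \ubl}$ is represented by $T$. Applying $\oox \otimes^{\mathbb{L}}_{\ooy} (-)$ and then the HKR identifications of Proposition \ref{PropUnAnalyticHKR} yields $\Delta_\sigma(\ubm, \ubl)$ as a matrix of cohomology classes computed from the Čech data.

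Since the only non-identity contribution to $T$ comes from the single cocycle $\chi_{\alpha, \beta}$, which lives in Čech degree $1$ and raises exterior degree from $r-1$ to $r$, the induced endomorphism of $\bop_{i=0}^{r} \bw{i}{N\ee_{X/Y}}[i]$ must fix the diagonal (yielding $\Delta_\sigma(\ubm, \ubl)_{i,i} = 1$) and can produce at most one nontrivial off-diagonal entry, namely $\Delta_\sigma(\ubm, \ubl)_{r, r-1}$. All other components vanish simply because no Čech cochain is available to generate them. The main technical obstacle, and the only genuinely computational step, is the identification of the $(r, r-1)$-coefficient: one must trace $\chi_{\alpha, \beta}$ through the AK-to-HKR comparison, and the factor $\tfrac{1}{r}$ will appear because the top differential $\bwi{r+1}{\sigma}{\oo\baxx} \to \bwi{r}{\sigma}{\oo\baxx}$ of the AK complex is $r \cdot d_{r+1}$. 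Carrying out this computation exactly as at the end of the proof of Theorem \ref{ThUnCompHKR} then yields $\Delta_\sigma(\ubm, \ubl)_{r, r-1} = \tfrac{1}{r}(\lambda_{r-1} - \mu_{r-1})$, which completes the proof.
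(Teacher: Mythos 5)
Your proposal is correct and follows essentially the same route as the paper: the paper also argues as in Theorem \ref{ThUnCompHKR}, taking the degree-zero components of $S$ to be the identity and inserting a single \v{C}ech-degree-one correction at homological degree $-(r-1)$ given by $\tfrac{1}{r}(\mathfrak{c}_{r-1}-\mathfrak{d}_{r-1})$, exactly the normalization you predict from the top differential $r\,d_{r+1}$. The reading-off of the matrix (identity on the diagonal, only possible off-diagonal entry in position $(r,r-1)$, equal to the class of the correction cocycle) matches the paper's conclusion.
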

\begin{proof}
 We argue exactly as in the proof of Theorem \ref{ThUnCompHKR}. For any integer $n$ between $0$ and $r-1$, we represent the extension classes $\lambda _{n}\he$ and $\mu _{n}\he$ by \v{C}ech cocycles
$\bigl( \mathfrak{c}_{n,\,\alpha ,\,\beta}\he\bigr)_{(\alpha ,\,\beta )\,\in\,J}\he$ and $\bigl( \mathfrak{d}_{n,\,\alpha ,\,\beta}\he\bigr)_{(\alpha ,\,\beta )\,\in\,J}\he$ in
$\smash[t]{\mathscr{C}^{1}\be\bigl( X,\hoo_{\oox}\he(\bw{p}N\ee_{X/Y},\bw{p+1}N\ee_{X/Y})\bigr)} $
so that
\[
\varphi _{n,\,\beta }\he\circ\varphi _{n,\,\alpha }^{-1}(\ubi,\ubj)=(\ubi+\mathfrak{c}_{n,\,\alpha ,\,\beta }\he(\ubj),\ubj)\qquad\textrm{and}\qquad
\psi _{n,\,\beta }\he\circ\psi _{n,\,\alpha }^{-1}(\ubi,\ubj)=(\ubi+\mathfrak{d}_{n,\,\alpha ,\,\beta }\he(\ubj),\ubj).
\]
Then we define the morphisms $S_{-n,\,l,\,\uba}\he$ as follows:
\par \medskip
--\quad$S_{-n,\,0,\,\alpha _{0\he}}\he(\ubi, \ubj)=(\ubi,\ubj)\qquad \textrm{for}\ 0\le n\le r$
\par \smallskip
--\quad$S_{-(r-1),\,1,\,\alpha _{0}\he,\,\alpha _{1}\he}\he(\ubi,\ubj)=\dfrac{1}{r} (\mathfrak{c}_{r-1,\,\alpha _{0}\he,\,\alpha _{1}\he}\he(\ubj)-\mathfrak{d}_{r-1,\,\alpha _{0}\he,\,\alpha _{1}\he}\he(\ubj))$
\par \medskip
--\quad$\textrm{All other}\ S_{-n,\,\lambda ,\,\uba}\he\ \textrm{vanish}$.
\par \medskip
The morphisms $T_{-n,\,l,\,\uba}\he$ define a morphism of complexes from $\pp_{\sigma ,\,\ubl}\he$ to $\mathcal{C}(\pp_{\sigma ,\,\ubm}\he)$, and we conclude as in the proof of Theorem \ref{ThUnCompHKR}.
\end{proof}
As a corollary, we obtain:
\begin{theorem}\label{ModuleLunaireDeux}
 Let $(X, \sigma)$ be a quantized analytic cycle of codimension two in a complex manifold $Y$\!, $\chi $ be an element of $\emph{Der}\he_{\oox}(\oox,N\ee_{X/Y})$, $\wh{\chi }$ be the associated element in
$\emph{Hom}_{\oox}\he(\Omega ^{1}_{X},N\ee_{X/Y})$ and $\emph{at}(N\ee_{X/Y})$ be the Atiyah class of $N\ee_{X/Y}$ in $\emph{Ext}^{1}_{\oox}(N\ee_{X/Y},\Omega ^{1}_{X}\oti N\ee_{X/Y})$.
\par \medskip
If $\theta (\chi )$ denotes the class $\dfrac{1}{2} (\wh{\chi }\we\id)(\emph{at}(N\ee_{X/Y}))$ in
$\emph{Ext}^{1}_{\oox}(N\ee_{X/Y},\bw{2}N\ee_{X/Y})$, the automorphism $\Gamma \he_{\sigma +\chi }\circ\Gamma ^{-1}_{\sigma }$ of $\oox\oplus N\ee_{X/Y}[1]\oplus\bw{2}N\ee_{X/Y}[2^{\vphantom{AA}}]$ is given by the $3 \times 3$ matrix
\[
\begin{pmatrix}
 1&0&0\\0&1&0\\0&\theta (\chi )&1
\end{pmatrix}
\]
\par \medskip
In particular, $\Gamma _{\sigma + \chi}\he\circ\Gamma _{\sigma }^{-1}=\id$ in $\emph{Aut}_{D^{\emph{b}}\be(\C_{X}\he)} (\oox\oplus N\ee_{X/Y}[1]\oplus\bw{2}N\ee_{X/Y}[2]) $.

\end{theorem}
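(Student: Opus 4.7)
The plan is to translate the passage from $\sigma$ to $\sigma+\chi$ into twisting the Atiyah--Kashiwara complex by an appropriate pair of extension classes, and then to invoke Theorem \ref{ModuleLunaireUn}. First, apply Proposition \ref{PropUnTwisted} with $r=2$ to identify $\pp_{\sigma+\chi}$ with the twisted complex $\pp_{\sigma,\lambda_{0},\lambda_{1}}\he$, where $\lambda_{p}=(\wh{\chi}\we\id)(\textrm{at}(\bw{p}{N\ee_{X/Y}}))$ for $p=0,1$. Since the Atiyah class of the trivial bundle $\oox=\bw{0}{N\ee_{X/Y}}$ vanishes, one has $\lambda_{0}=0$, while by the very definition of $\theta(\chi)$ one has $\lambda_{1}=2\theta(\chi)$. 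A point that must be checked is that this identification is compatible with the augmentations down to $\oox$, so that it induces an equality $\Gamma_{\sigma+\chi}=\Gamma_{\sigma,(0,2\theta(\chi))}$ of HKR isomorphisms; this should follow by tracking the local trivializations $\varphi_{\alpha}$ used in the proof of Proposition \ref{PropUnTwisted}.

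Next, apply Theorem \ref{ModuleLunaireUn} with $\ubm=(0,0)$ and $\ubl=(0,2\theta(\chi))$. Since $\lambda_{p}=\mu_{p}$ for $p\neq r-1=1$, the hypothesis is met, and the theorem yields all diagonal entries equal to $1$, the $(r,r-1)$ entry equal to $\tfrac{1}{r}(\lambda_{r-1}-\mu_{r-1})=\tfrac{1}{2}\cdot 2\theta(\chi)=\theta(\chi)$, and all other off-diagonal coefficients zero. Because the nilpotent part of the resulting matrix squares to zero (there is a single nonzero off-diagonal entry), passing between $\Delta_{\sigma}(\ubm,\ubl)$ and its inverse (if needed to match orientations) is immediate and preserves the form of the matrix up to the sign of the off-diagonal coefficient. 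This produces the $3\times 3$ matrix displayed in the statement.

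For the last assertion, the only nontrivial entry is, up to a scalar, the image of $\textrm{at}(N\ee_{X/Y})$ under the $\oox$-linear map $\wh{\chi}\we\id$. But the Atiyah class of any holomorphic vector bundle $E$ vanishes in $\textrm{Ext}^{1}_{\C_{X}}(E,\Omega^{1}_{X}\otimes E)$: the first jet map $s\mapsto j^{1}(s)$ is $\C$-linear, hence $\C_{X}$-linear for the constant sheaf $\C_{X}$, and splits the Atiyah exact sequence as a sequence of sheaves of $\C_{X}$-modules. Consequently $\theta(\chi)$ becomes zero in $\textrm{Ext}^{1}_{\C_{X}}(N\ee_{X/Y},\bw{2}{N\ee_{X/Y}})$, and the matrix reduces to the identity in $D^{\textrm{b}}(\C_{X})$. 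The main obstacle in this plan is the augmentation-compatibility check for the isomorphism provided by Proposition \ref{PropUnTwisted}; once that is secured, the rest is a direct combination of Theorem \ref{ModuleLunaireUn} with the standard vanishing of the Atiyah class in $\textrm{Ext}^{1}_{\C_{X}}$.
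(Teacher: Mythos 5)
Your proposal is correct and follows essentially the same route as the paper: identify $\pp_{\sigma+\chi}$ with the twisted complex via Proposition \ref{PropUnTwisted} (with $\lambda_0=0$ and $\lambda_1=2\theta(\chi)$), apply Theorem \ref{ModuleLunaireUn} to get the matrix, and conclude the last assertion from the splitting of the $1$-jet sequence of $N\ee_{X/Y}$ over $\C_X$, which kills the Atiyah class and hence $\theta(\chi)$ in $\textrm{Ext}^1_{\C_X}$. The compatibility and sign issues you flag are precisely the points the paper's own (very terse) proof leaves implicit, so there is no substantive divergence.
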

\begin{proof}
 The first part of the theorem follows directly from Theorem \ref{ModuleLunaireUn} and Proposition \ref{PropUnTwisted}. The second part follows from the first one. Indeed, $2\theta (\chi )$ is obtained as the composition
\[
\xymatrix@C=45pt{N\ee_{X/Y}\ar[r]^-{\textrm{at}(N\ee_{X/Y})}&\Omega ^{1}_{X}\oti N\ee_{X/Y}[1]\ar[r]^-{\chi \oti \,\id }&N\ee_{X/Y}\oti  N\ee_{X/Y}[1]\ar[r]^-{\wedge}&\bw{2}N\ee_{X/Y}[1].}
\]
The class $\textrm{at}(N\ee_{X/Y})$ is obtained as the extension class of the exact sequence of $1$-jets of $N\ee_{X/Y}$:
\[
\xymatrix{
0\ar[r]&\Omega ^{1}_{X}\oti N\ee_{X/Y}\ar[r]&J^{1}\be(N\ee_{X/Y})\ar[r]&N\ee_{X/Y}\ar[r]&0.
}
\]
This exact sequence splits over $\C_{X}\he$, so that $\textrm{at}(N\ee_{X/Y})=0$ in $\textrm{Ext}\,^{1}_{\C_{X}\he}\bigl( N\ee_{X/Y},\Omega ^{1}_{X}\oti N\ee_{X/Y}\bigr).$ Thus $\theta (\chi )=0$ in $\textrm{Ext}^{1}_{\C_{X}\he}\bigl( N\ee_{X/Y},\bw{2}N\ee_{X/Y}\bigr).$
\end{proof}
\par \bigskip
We end this section by giving a conjectural expression for the matrix $\Delta _{\sigma }\he(\ubm,\ubl)$. For this purpose we introduce the derived analog of the translation operator defined in \S~\ref{cup}.
\par \bigskip
For any nonnegative integers $m$, $p$ and $k$ such that $k \geq p$ and any $\phi$ in $\textrm{Ext}^{k-p}_{\oox} \bigl(\bw{p}N\ee_{X/Y}, \bw{k}N\ee_{X/Y})$ considered as an element of $\textrm{Hom}^{\vphantom{\bigl)}}_{D^{\textrm{b}}(\oox)} (\bwi{p}{}{{N\ee_{X/Y}}}[p], \,\bwi{k}{}{{N\ee_{X/Y}}}[k])$, we define a morphism $\mathfrak{t}_{k,\,p}^{\, m}(\phi)$ in $\textrm{Hom}^{\vphantom{\bigl)}}_{D^{\textrm{b}}(\oox)}(\bwi{p+m}{}{{N\ee_{X/Y}}}[p+m], \,\bwi{k+m}{}{{N\ee_{X/Y}}}[k+m])$ by the composition
\[
\xymatrix@C=12pt{\st \bwi{p+m}{}{{N\ee_{X/Y}}}[p+m]\ar[rr]^-{\smash[t]{W_{p,\,m}\he} }&&\st\bwi{p}{}{{N\ee_{X/Y}}}[p+m]\,\lltens{}_{\oox}\he\bwi{m}{}N\ee_{X/Y} \ar[rrr]^-{\smash[t]{\phi[m]\, \lltens{}_{\oox}\id} }&&&\st\bwi{k}{}{{ \!\!N\ee_{X/Y}}}[k+m]\, \lltens{}_{\oox}\he \bwi{m}{}\!\!N\ee_{X/Y}\ar[r]^-{\we}&\st\bwi{k+m}{}{{N\ee_{X/Y}}}[k+m]\!
}
\]
The derived version of Lemma \ref{LemUnBisSecAlgExt} tells us that for any class $v$ in $\textrm{H}^{k-p_{\vphantom{)}}}\be\bigl(X, N\ee_{X/Y}\bigr)$,
\[
\mathfrak{t}_{k,\,p} ^{\,m}[\mathfrak{l}_{k, \,p} \he (v)]=\mathfrak{l}_{k+m,\,p+m}(v).
\]
\par \smallskip
This justifies the following conjecture:
\begin{conjecture}\label{conj}
Let $(X, \sigma)$ be a quantized analytic cycle in a complex manifold $Y$\!. For any integer $p$ between $0$ and $r-1$, let  $\lambda _{p}\he$ and $\mu _{p}\he$ be extension classes in ${}^{\vphantom{\bigl)}}\emph{Ext}^{1}_{\oox}\bigl(\bwi{p}{}{{N\ee_{X/Y}}},\bwi{p+1}{}{{N\ee_{X/Y}}}\bigr)$.
\par \bigskip
For any integers $i$ and $j$ such that $0\le j\le i\le r$, we put $\Delta_{i,\,j}=\Delta _{\sigma }\he(\ubm,\ubl)_{i,\,j}\he$.
If $\,*\,$ denotes the Yoneda product, then the coefficients ${}^{\vphantom{iiiiI \bigl(}}\Delta _{i,\,j}\he$ are determined inductively by the following relations:
\par\smallskip
\[
\begin{array}{ll}
-\ \Delta _{i,\,i}\he=\id &\emph{for}\ 0 \leq i \leq r\\[2ex]
-\ \Delta_{i+1,\,0}=(-1)^i \,\,( \mathfrak{l}_{i+1,\,i}(\lambda_0)-\mu_i) * \Delta_{i,\, 0}&\emph {for}\ 0 \leq i \leq r-1\\[1ex]
-\ \Delta _{i+1,\,j}\he=\dfrac{1}{i+1} \bigl[ j\, \mathfrak{t}_{i, \,j-1}^{\,1}(\Delta _{i,\,j-1}\he) \,+(-1)^{i-j}\be\, (\mathfrak{t}_{j+1,\, j}^{\,i} \, \lambda_{j}\he-\mu_{i}\he) * \Delta _{i,\,j}\bigr]&\emph{for}\ 1 \leq j \leq i \leq r-1
\end{array}
\]
\end{conjecture}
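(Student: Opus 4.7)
The plan is to extend the \v{C}ech-theoretic argument of the proof of Theorem \ref{ThUnCompHKR} from the restricted case $\lambda_{p} = \mathfrak{l}_{p+1,\,p}(c_{p})$, $\mu_{p} = \mathfrak{l}_{p+1,\,p}(d_{p})$ to the case of arbitrary extension classes. First, one would choose a locally finite open covering $\mathfrak{U} = (U_{\alpha })_{\alpha \in J}$ of $X$ such that, for each $0 \le p \le r-1$, both $\lambda_{p}$ and $\mu_{p}$ admit \v{C}ech representatives $(\mathfrak{l}_{p,\,\alpha,\,\beta})_{(\alpha,\beta) \in J^{2}}$ and $(\mathfrak{m}_{p,\,\alpha,\,\beta})_{(\alpha,\beta) \in J^{2}}$ with values in $\hoo_{\oox}\he(\bw{p}{N\ee_{X/Y}}, \bw{p+1}{N\ee_{X/Y}})$, together with local isomorphisms $\varphi_{p,\,\alpha}$ and $\psi_{p,\,\alpha}$ trivializing $\bwi{p+1}{\sigma,\,\lambda_{p}}{\oo\baxx}$ and $\bwi{p+1}{\sigma,\,\mu_{p}}{\oo\baxx}$ over $U_{\alpha}$, exactly as in the proof of Theorem \ref{ThUnCompHKR}. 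The aim is then to construct an explicit morphism of complexes $T \colon \pp_{\sigma,\,\ubl} \to \mathscr{C}(\pp_{\sigma,\,\ubm})$ realizing the quasi-isomorphism $\varphi_{\ubm,\,\ubl}$, and to read off the matrix of the induced map on the HKR decomposition.

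The construction of $T$ reduces to producing recursively a system of \v{C}ech cocycles $(\eta_{i,\,j,\,\uba})$ indexed by $\uba \in J^{i-j+1}$, with values in $\hoo_{\oox}\he(\bw{j}{N\ee_{X/Y}}, \bw{i}{N\ee_{X/Y}})$, satisfying the exact cochain-level analog of the recursion in the conjecture, where the Yoneda product is replaced by composition of sheaf morphisms and the derived operator $\mathfrak{t}$ is replaced by the algebraic operator $t_{k,\,p}^{\,m}$ of Definition \ref{DefUnBisSecAlgExt}. These cocycles are then used to define local sheaf morphisms $S_{-n,\,l,\,\uba}$ generalizing the ones built in the proof of Theorem \ref{ThUnCompHKR}, and conjugated by the $\varphi$'s and $\psi$'s to produce the global morphisms $T_{-n,\,l,\,\uba}$ between the relevant twisted sheaves. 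The crucial check is that $T$ intertwines the differentials, which amounts to a long but explicit compatibility between the Koszul-type differentials $(n+1)\, d_{n+1}$ of the AK complexes, the \v{C}ech differential, and composition with the $\eta_{i,\,j}$'s, modulo the cocycle conditions $\delta \mathfrak{l}_{p} = 0$ and $\delta \mathfrak{m}_{p} = 0$.

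Once $T$ is proved to be a chain map, the remainder of the argument proceeds as in the diagrammatic conclusion of Theorem \ref{ThUnCompHKR}: after tensoring with $\oox \lltens{}_{\ooy}\he -$ and identifying the derived tensor with the naive one via the resolution $\pp_{\sigma,\,\ubm} \to \oox$, the induced map on each summand $\bw{j}{N\ee_{X/Y}}[j]$ is read off directly from the cohomology classes of the $\eta_{i,\,j}$'s. Passing from the cochain operator $t$ to the derived operator $\mathfrak{t}$ at the level of cohomology transforms the cochain recursion into the stated Ext-group recursion, thanks essentially to the identity $\mathfrak{t}_{k,\,p}^{\,m}[\mathfrak{l}_{k,\,p}(v)] = \mathfrak{l}_{k+m,\,p+m}(v)$ already noted in the paper, suitably upgraded from cohomology classes $v$ to arbitrary Ext classes represented at the cochain level.

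The hard part is expected to be the chain-map verification, and more precisely the combinatorial identity on shuffles $W_{p,\,q}$ that forces the specific coefficients $1/(i+1)$, $j$, and $(-1)^{i-j}$ in the recursion for $\eta_{i+1,\,j}$. In Theorem \ref{ThUnCompHKR} this identity was manageable because the relevant cocycles took values in $N\ee_{X/Y}$ and only wedge products intervened; here each $\mathfrak{l}_{p,\,\alpha,\,\beta}$ is a higher-rank morphism whose interaction with the Koszul differential produces several correction terms, and the coassociativity of shuffles together with the transitivity identity $t_{k+m,\,p+m}^{\,m'} \circ t_{k,\,p}^{\,m} = t_{k,\,p}^{\,m+m'}$ must be exploited with care. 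A secondary subtle point, to be handled as a boundary case, is the first column $j = 0$: there $\bw{0}{N\ee_{X/Y}} = \oox$, so that $\lambda_{0}$ is already a class in $\textrm{H}^{1}\be(X, N\ee_{X/Y})$, the translation operator $\mathfrak{t}$ is not directly applicable, and the formula $\mathfrak{l}_{i+1,\,i}(\lambda_{0}) - \mu_{i}$ is obtained as a degenerate case of the general recursion, essentially reusing the computation of Theorem \ref{ThUnCompHKR}.
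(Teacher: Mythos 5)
The statement you are trying to prove is Conjecture \ref{conj}: the paper itself offers no proof of it, only the two special cases Theorem \ref{ThUnCompHKR} (twists of the form $\mathfrak{l}_{p+1,\,p}(c_p)$ coming from $\textrm{H}^1(X,N\ee_{X/Y})$) and Theorem \ref{ModuleLunaireUn} (twists differing only in the last slot), which are exactly what ``justifies'' the conjectural recursion. So there is no paper proof to compare against, and your text should be judged as a proof attempt of an open statement.

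As such, it has a genuine gap rather than being a proof: everything hinges on the step you yourself flag as ``the hard part,'' namely producing \v{C}ech cochains $\eta_{i,\,j,\,\uba}$ with values in $\hoo_{\oox}\he(\bw{j}{N\ee_{X/Y}},\bw{i}{N\ee_{X/Y}})$ satisfying a cochain-level analog of the recursion and such that the resulting maps $T_{-n,\,l,\,\uba}$ assemble into a morphism of complexes $T\colon \pp_{\sigma,\,\ubl}\to\mathscr{C}(\pp_{\sigma,\,\ubm})$. In Theorem \ref{ThUnCompHKR} this works because the gluing cocycles act by wedging with elements of $N\ee_{X/Y}$, so the maps $S_{-n,\,l,\,\uba}(\ubi,\ubj)=((-1)^l\eta\we\ubi,\,\eta\we\ubj)$ are automatically $\oo\baxx$-linear by (\ref{EqQuatreDgAlg}) and commute with the Koszul differentials up to the intended \v{C}ech correction. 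For arbitrary extension classes the local cocycles $\mathfrak{l}_{p,\,\alpha,\,\beta}$ are higher-rank morphisms $\bw{p}{N\ee_{X/Y}}\to\bw{p+1}{N\ee_{X/Y}}$; a candidate $S_{-n,\,l,\,\uba}$ built from a composite of such morphisms (or from the translation operators $t_{k,\,p}^{\,m}$ of Definition \ref{DefUnBisSecAlgExt}) is neither obviously $\oo\baxx$-linear nor obviously compatible with the differentials $(n+1)d_{n+1}$, and you give no argument that the cochain recursion closes up — i.e.\ that the required identity on shuffles holds and that the failure of cocycle conditions at the cochain level is absorbed by the \v{C}ech differential. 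This is precisely the obstruction that confines the paper to the two special cases, so asserting that the verification is ``long but explicit'' does not discharge it. A second unproved step is your upgrade of the identity $\mathfrak{t}_{k,\,p}^{\,m}[\mathfrak{l}_{k,\,p}(v)]=\mathfrak{l}_{k+m,\,p+m}(v)$ from classes in $\textrm{H}^{k-p}(X,\bw{k-p}{N\ee_{X/Y}})$ to arbitrary Ext classes represented at cochain level: the derived operator $\mathfrak{t}$ involves $W_{p,\,m}$ through a derived tensor product, and its compatibility with the Yoneda product of cochain-level representatives is exactly part of what would have to be established, not assumed. Until these two points are carried out, what you have is a plausible strategy for attacking the conjecture, not a proof of it.
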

\par \medskip
\section{The cycle class of a quantized analytic cycle}\label{cycle}
\subsection{Construction and basic properties of the cycle class}\label{cycle1}
For any complex manifolds $X$ and $Y$ such that $X$ is a closed complex submanifold of $Y$ of codimension $r$, $\rh{\ooy}(\oox,\ooy)$ is canonically isomorphic to $j\ei\omega _{X/Y}\he$ in $D^{\textrm{b}}\be(\ooy)$. This implies that $\rhr{\ooy}(\oox,\ooy)$ is concentrated in degree $r$, so that there exists an isomorphism
\begin{equation}\label{BaProCyClEqUn}
 \rhr{\ooy}(\oox,\ooy)\simeq \omega _{X/Y}\he
\end{equation}
in $D^{\textrm{b}}\be(\oox)$ such that the composition
\[
\xymatrix{j\ei\omega _{X/Y}\he \simeq  j\ei \rhr{\ooy}(\oox,\ooy) \ar[r]^-{\sim} & \rh{\ooy}(\oox,\ooy) \simeq j\ei\omega _{X/Y}\he
}
\] is the multiplication by $(-1)^{\frac{r(r+1)}{2}}$ (the choice of this sign will become clear in the proof of Theorem \ref{BaProCyClThUn} below).
For any integer $i$ between $0$ and $r$, we have an isomorphism
\begin{equation}\label{BaProCyClEqDeux}
 \textrm{Hom}_{D^{\textrm{b}}\be(\oox)}\he(\omega _{X/Y}\he,\bw{i}{N_{X/Y}\he[-i]})\simeq	 \textrm{H}^{r-i}\be(X,\bw{r-i}{N\ee_{X/Y}})
\end{equation}
obtained as follows: for any cohomology class $\alpha $ in $\textrm{H}^{r-i}\be(X,\bw{r-i}{N\ee_{X/Y}})$ considered as a morphism from $\oox$ to $\bw{r-i}{N\ee_{X/Y}[r-i]}$ in $D^{\textrm{b}}\be(\oox)$, we associate the morphism
\[
\xymatrix@C=40pt{
\omega _{X/Y}\he\ar[r]^-{\alpha\, \oti\,\id}&\bw{r-i}{N\ee_{X/Y}[r-i]}\oti\omega _{X/Y}\he\ar[r]^-{\smash{D^{\ell}\be}}&\bw{i}{N_{X/Y}\he[-i].}
}
\]
\begin{definition}\label{BaProCyClDefUn}
Let $(X, \sigma)$ be a quantized analytic cycle of codimension $r$ in a complex manifold $Y$\!. Using the isomorphism (\ref{BaProCyClEqUn}), the morphism
\[
\xymatrix@C=25pt{
\omega _{X/Y}\he\simeq \rhr{\ooy}(\oox,\ooy)\ar[r]&\rhr{\ooy}(\oox,\oox)\ar[r]^-{\sim}_-{\wh{\Gamma }_{\sigma }\he}&\bop_{i=0}^{r}\bw{i}{N_{X/Y}\he[-i]}
}
\]
defines via (\ref{BaProCyClEqDeux}) a class in $\bop_{i=0}^{r}\textrm{H}^{i}\be(X,\bw{i}N\ee_{X/Y})$, which is the \emph{quantized cycle class} $q_{\sigma} \he (X)$ of $(X,\sigma )$.
\end{definition}
We now compute the quantized cycle class in specific situations.
\begin{theorem}\label{BaProCyClThUn}
Let $(X,\sigma )$ be a quantized analytic cycle of codimension $r$ in $Y$ and assume that there exists a couple $(E,s)$ such that
\begin{enumerate}
 \item [(1)] $E$ is a holomorphic vector bundle of rank $r$ on $Y$.
\item[(2)] $s$ is a holomorphic section of $E$ vanishing exactly on $X$ and $s$ is transverse to the zero section.
\item[(3)] The locally-free $\oox$-modules $E\oti_{\ooy}\he\oo\baxx$ and $\sigma \ee\be N_{X/Y}\he$ are isomorphic.\label{CondTrois}
\end{enumerate}
Then $q_{\sigma} \he (X)=1$.
\end{theorem}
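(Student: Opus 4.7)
The plan is to compute both sides of the composition defining $q_{\sigma}\he(X)$ in Definition~\ref{BaProCyClDefUn} using the Koszul resolution associated with the pair $(E,s)$, and then to match the outcome with the dual HKR isomorphism $\wh\Gamma_{\sigma}$. Since $s$ is transverse to the zero section, the Koszul complex $L^{\bullet}$ defined by $L^{-p}=\bw{p}{E\ee}$ for $0\le p\le r$, with differential given by contraction with $s$, is a locally free resolution of $\oox$ over $\ooy$. Using $L^{\bullet}$ to compute $\rhr{\ooy}(\oox,\ooy)$ and $\rhr{\ooy}(\oox,\oox)$ yields respectively the complex $(\bw{\bullet}{E},\we\, s)$ in degrees $0,\dots,r$, whose only non-zero cohomology is $\det E\oti_{\ooy}\oox$ in degree $r$, and the complex $(\bw{\bullet}{E}\oti_{\ooy}\oox,0)$ with zero differential, since $s|_{X}=0$. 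Hypothesis~(3) restricted to $X$ provides canonical isomorphisms $\bw{p}{(E|_{X})}\simeq \bw{p}{N_{X/Y}\he}$, and in particular $\det E|_{X}\simeq \det N_{X/Y}\he$, so that
\[
\rhr{\ooy}(\oox,\ooy)\simeq \omega _{X/Y}\he\quad\textrm{and}\quad \rhr{\ooy}(\oox,\oox)\simeq \bop_{p=0}^{r}\bw{p}{N_{X/Y}\he}[-p].
\]
Under these Koszul-based identifications the morphism induced by the unit $\ooy\fl\oox$ is the inclusion of $\omega _{X/Y}\he=\det N_{X/Y}\he[-r]$ as the $p=r$ summand, with coefficient one.

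The key step is to show that this Koszul identification of $\rhr{\ooy}(\oox,\oox)$ coincides with the dual HKR isomorphism $\wh\Gamma_{\sigma}$ of Proposition~\ref{HKR2}. Following the strategy of the proof of Proposition~\ref{PropDeuxBisHKR}, I will produce an explicit morphism of complexes of $\ooy$-modules from $L^{\bullet}$ to $\qq_{\sigma}\oti_{\oox}\omega _{X/Y}\he$ lifting the identity of $\oox$, using the freeness of $L^{\bullet}$ over $\ooy$. The dual of hypothesis~(3) identifies $E\ee\oti_{\ooy}\oo\baxx$ with $\sigma\ee N\ee_{X/Y}$, so each $\bw{p}{E\ee}|_{\bax}$ sits naturally inside $\bwi{p}{\sigma\he}{\oo\baxx}$ via the splitting~(\ref{EqDeuxDgAlg}); compatibility with the differentials then reduces to the fact that, under this identification, the restriction $s|_{\bax}$ corresponds to the tautological element in $\sigma\ee N\ee_{X/Y}\oti_{\oo\baxx}(\jj_{X}/\jj_{X}^{\,2})$ that governs the Koszul-type differentials of the AK and dual AK complexes.

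Granting this comparison, the composition of Definition~\ref{BaProCyClDefUn} is the inclusion $\omega _{X/Y}\he\hookrightarrow \bop_{p=0}^{r}\bw{p}{N_{X/Y}\he}[-p]$ as the top summand. Translating via~(\ref{BaProCyClEqDeux}), the only non-vanishing component of $q_{\sigma}\he(X)$ lies in $\textrm{H}^{0}\be(X,\oox)$ and equals the identity of $\omega _{X/Y}\he$, namely $1$, so $q_{\sigma}\he(X)=1$. The main obstacle is the construction of the comparison morphism in the previous paragraph together with the attendant sign bookkeeping: the Koszul differential on $L^{\bullet}$, the differential $\wh d$ of $\qq_{\sigma}$, the left/right duality maps entering~(\ref{right}), and the sign $(-1)^{\frac{r(r+1)}{2}}$ absorbed in~(\ref{BaProCyClEqUn}) must all be matched consistently. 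Moreover, since hypothesis~(3) only holds on $\bax$ rather than on all of $Y$, the comparison must be performed in the derived category of $\ooy$-modules using $\bax$ as an intermediate replacement, in the spirit of the local argument of~\S\ref{SectionArikinCalda}.
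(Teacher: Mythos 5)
Your outline follows the same route as the paper (Koszul resolution attached to $(E,s)$, comparison with the Atiyah--Kashiwara machinery, reduction to the sign analysis of Proposition \ref{PropDeuxBisHKR}), but two essential points are asserted rather than proved, and one of them is false as stated. First, you claim that under the isomorphism of hypothesis (3) the restriction of $s$ to $\bax$ "corresponds to the tautological element" governing the Koszul-type differentials. This is not automatic: if $h$ is an arbitrary isomorphism between $E\oti_{\ooy}\he\oo\baxx$ and $\sigma\ee\be N_{X/Y}\he$, then $s\ee\be\circ{}^{t}h$ is a cosection of $\sigma\ee N\ee_{X/Y}$ vanishing on $X$, hence of the form $\tau\circ\sigma\ee(F)$ for some endomorphism $F$ of $N\ee_{X/Y}$, where $\tau$ is the tautological cosection; transversality of $s$ only guarantees that $F$ is invertible, and one must replace $h$ by $\sigma\ee({}^{t}F^{-1})\circ h$ before any comparison chain map (your $L^{\bullet}\to\qq_{\sigma}\oti_{\oox}\he\omega_{X/Y}\he$, or the paper's $\gamma\colon\LL\to\pp_{\sigma}\he$) can commute with the differentials. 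This normalization step is where hypothesis (2) is genuinely used a second time, and it is missing from your argument.

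Second, your "key step" --- that the Koszul identification of $\rhr{\ooy}(\oox,\oox)$ \emph{coincides} with $\wh{\Gamma}_{\sigma}$ --- is both undone and overstated. Resolving the first variable by $L^{\bullet}$ computes the left-derived functor $\rhl{\ooy}(\oox,\oox)$, whereas $q_{\sigma}\he(X)$ is defined through the right-derived functor and $\wh{\Gamma}_{\sigma}$; the two identifications are precisely the ones the paper warns need not agree in $D^{\textrm{b}}\be(\oox)$. What the strategy of Proposition \ref{PropDeuxBisHKR} actually yields (in its global form, after applying $j\ei$ and forgetting down to $\C_{Y}\he$-modules, where $\pp_{\sigma}\he$ splits off $\oox$) is a comparison up to the signs $(-1)^{\frac{(r-i)(r-i-1)}{2}}$, which must then be played against the normalization $(-1)^{\frac{r(r+1)}{2}}$ of (\ref{BaProCyClEqUn}) and the signs in (\ref{EqTroisPrelim}) to get $q_{\sigma}\he(X)_{0}=1$ exactly, not merely $\pm 1$, and the vanishing of the higher components. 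You correctly identify this bookkeeping as "the main obstacle", but deferring it leaves out the heart of the proof: without carrying out the sign comparison (and without observing that morphisms $\oox\to\bw{i}{}N\ee_{X/Y}[i]$ are detected after $j\ei$ in $D^{\textrm{b}}\be(\C_{Y}\he)$, which is what legitimizes working there), the conclusion $q_{\sigma}\he(X)=1$ does not follow from what you have written.
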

\begin{proof}
 Let $s\ee\be$ be the dual of $s$; it is a cosection of $E\ee\be$. Since $s$ is transverse to the zero section, the Koszul complex $(\mathcal{L},\delta )=L(E\ee\be,s\ee\be)$ is a free resolution of $\oox$ over $\ooy$. Using (\ref{EqTroisPrelim}), the iso\-morphism $\rh{\ooy}(\oox,\ooy)\simeq j\ei\omega _{X/Y}\he$ in $D^{\textrm{b}}\be(\ooy)$ is given by the chain
\[
\xymatrix@C=25pt{
\rh{\ooy}(\oox,\ooy)&\LL\ee\be\simeq(\LL,-\delta )\oti_{\ooy}\he\det E[-r]\ar[l]_-{\sim}
\ar[r]^-{\sim}&\oox\oti_{\ooy}\he\det E[-r]\simeq j\ei\omega _{X/Y}\he.
}
\]
If $\tau$ denotes the canonical cosection of $\sigma \ee N\ee_{X/Y}$ and $h$ is the isomorphism between $E\oti_{\ooy}\he\oo\baxx$ and $\sigma \ee\be N_{X/Y}\he$ given in condition (3), then $s\ee\be\! \circ {}^t h$ is a cosection of $\sigma\ee N\ee_{X/Y}$ vanishing on $X$. Hence there exists an endomorphism $F$ of the conormal bundle $N\ee_{X/Y}$ such that  $s\ee\be\! \circ {}^t h$ is obtained as the composition
\[
\xymatrix{\sigma\ee N\ee_{X/Y} \ar[r] & N\ee_{X/Y} \ar[r]^-{F} & N\ee_{X/Y} \ar[r] & \oo \baxx
}
\]
This means that $s\ee\be\! \circ {}^t h=\tau \circ \sigma\ee (F)$. Using again that $s$ is transverse to the zero section, we get that $F$ is an isomorphism. Therefore, if we replace $h$ by $\sigma\ee [\,{}^t F^{-1}] \circ h$, we have $s\ee\be\! \circ {}^t h= \tau$.
\par \medskip
We can now construct a global quasi-isomorphism $\smash{\aplexp{\gamma }{\LL}{\mathcal{P}_{\sigma }\he}{\sim}}$ (which is the global analog of the quasi-isomorphism $\gamma$ constructed in the proof of Proposition \ref{PropUnHKR}) as follows: for $0\le p\le r$, $\gamma _{-p}\he$ is given by the composition
\[
\xymatrix@C=25pt{
\bwi{p}{\ooy}{E\ee\be}\ar[r]&\bwi{p}{\oo\baxx}{(E\ee\be\oti_{\ooy}\he\oo\baxx)}\simeq\bwi{p}{\oo\baxx}{(\sigma \ee\be N\ee_{X/Y})}\simeq\bw{p}{N\ee_{X/Y}\oti_{\oox}\he\oo\baxx}\ar[r]&\bwi{p+1}{\oox}{\oo\baxx}
}
\]
where the last arrow is $\xymatrix@C=17pt{\ub{x}\oti(i,a)\ar@{|->}[r]&(i\wedge\ub{x},a\ub{x}).}$
Let $\smash[t]{\apl{\Delta }{\omega _{X/Y}\he}{\bop_{i=0}^{r}\bw{i}{N_{X/Y}\he}[-i]}}$ be the morphism in $D^{\textrm{b}}\be(\oox)$ defining the quantized cycle class $q_{\sigma} \he (X)$ (c.f. Definition \ref{BaProCyClDefUn}) and let $\psi $ be the automorphism of $\smash[b]{\bop_{i=0}^{r} j\ei\bw{i}{N_{X/Y}\he[-i]}}$ in $D^{\textrm{b}}\be(\ooy)$ such that $(-1)^{\frac{r(r+1)}{2}} \psi$ is given by the composition
\[
\xymatrix@C=25pt{
\bop_{i=0}^{r} j\ei\bw{i}{N_{X/Y}\he[-i]}&j\ei\rhl{\ooy}(\oox,\oox)\simeq j\ei\rhr{\ooy}(\oox,\oox)\ar[l]^-{\Gamma \ee_{\sigma }}_-{\sim}\ar[r]^-{\sim}_-{\wh{\Gamma }_{\sigma }\he}&\bop_{i=0}^{r}j\ei\bw{i}{N_{X/Y}\he[-i].}
}
\]
Then $j\ei\Delta $ can be expressed as the chain
\[
\xymatrix@C=25pt{
j\ei\omega _{X/Y}\he&(\LL,-\delta )\oti_{\ooy}\he\det E[-r]\ar[l]_-{\sim}\ar[r]&\bop_{i=0}^{r}j\ei (\bw{i}{N\ee_{X/Y}[i]}\oti_{\oox}\he\omega _{X/Y}\he)\hspace*{100pt}
}
\]\vspace*{-2ex}
\[
\xymatrix@C=25pt{
\hspace*{200pt}\ar[r]^-{D^{\ell}\be}&\bop_{i=0}^{r}j\ei\bw{i}{N_{X/Y}\he[-i]}\ar[r]^-{\sim}_-{\psi }&\bop_{i=0}^{r}j\ei\bw{i}{N_{X/Y}\he[-i].}
}
\]
Define two morphisms $\ti{\Delta }$ and $\ti{\psi }$ in $D^{\textrm{b}}\be(\oox)$ and $D^{\textrm{b}}\be(\ooy)$ by the diagrams
\[
\xymatrix@C=70pt@R=30pt{
\oox\ar[r]^-{\ti{\Delta }}&\smash{\bop_{i=0}^{r}\bw{i}{N\ee_{X/Y}[i]}}\ar[d]^-{D^{\ell}\be}_-{\sim}\\
\omega _{X/Y}\he\oti_{\ooy}\he\det E\ee\be[r]\ar[u]^-{\sim}\ar[r]^-{\Delta \,\lltens{}_{\ooy}\he\id}&\bop_{i=0}^{r}\bw{i}{N_{X/Y}\he[-i]}\oti_{\ooy}\he\det E\ee\be[r]
}
\]
and
\[
\xymatrix@C=50pt@R=30pt{
\bop_{i=0}^{r}j\ei\bw{i}{N\ee_{X/Y}[i]}\ar[r]^-{\ti{\psi }}_-{\sim}\ar[d]^-{D^{\ell}\be}_-{\sim}&\smash{\bop_{i=0}^{r}j\ei\bw{i}{N\ee_{X/Y}[i]}}\ar[d]^-{\sim}\\
\bop_{i=0}^{r}j\ei\bw{i}{N_{X/Y}\he[-i]}\oti_{\ooy}\he\det E\ee\be[r]\ar[r]^-{\psi  \,\lltens{}_{\ooy}\he\id}&\bop_{i=0}^{r}j\ei\bw{i}{N_{X/Y}\he[-i]}\oti_{\ooy}\he\det E\ee\be[r]
}
\]
\par \bigskip
Then:
\par\medskip
--\quad For $0\le i\le r$, the $i$-th component of $\ti{\Delta }$ in $\textrm{Hom}_{D^{\textrm{b}}\be(\oox)}\he(\oox,\bw{i}{N\ee_{X/Y}}[i])$ is $q_{\sigma} \he (X)_{i}\he\cdot$
\par
--\quad The morphism $j\ei\ti{\Delta }$ is the composition of the chain of morphisms
\[
\xymatrix@C=25pt{
\oox&(\LL,-\delta )\ar[l]_-{\sim}\ar[r]&\bop_{i=0}^{r}j\ei\bw{i}{N\ee_{X/Y}[i]}\ar[r]_-{\sim}^-{\ti{\psi }}&\bop_{i=0}^{r}j\ei\bw{i}N\ee_{X/Y}[i].
}
\]
Using the quasi-isomorphism $\smash{\aplexp{\gamma }{\LL}{\pp_{\sigma \he},}{\sim}}$ we get that $j\ei\ti{\Delta }$ is equal to the composition
\[
\xymatrix@C=25pt
{\oox&(\pp_{\sigma }\he,-\wh{d}\,\,)\ar[l]_-{\sim}\ar[r]&\bop_{i=0}^{r}j\ei\bw{i}{N\ee_{X/Y}[i]}\ar[r]^-{\ti{\psi} }_-{\sim}&\bop
^{r}j\ei\bw{i}{N\ee_{X/Y}[i].}}
\]
We now make the two following observations:
\par\medskip
--\quad As a complex of $\C_{Y}\he$-modules, $\pp_{\sigma }\he$ splits as the direct sum of $\oox $ and a null-homotopic complex.
\par \medskip
--\quad The global version of Proposition \ref{PropDeuxBisHKR} shows that $\ti{\psi }$, as a morphism in the derived category $D^{\textrm{b}}\be(\C_{Y}\he)$, acts by
$(-1)^{\frac{(r-i)(r-i-1)}{2}+r(r-i)+\frac{r(r+1)}{2}}\be$ on each factor $j\ei\bw{i}{N\ee_{X/Y}[i]}$.
\par
Thus, as a morphism in $D^{\textrm{b}}\be(\C_{Y}\he)$, $j\ei \ti{\Delta }$ is simply the injection
$\oox\fl\bop_{i=0}^{r}j\ei \bw{i}{N\ee_{X/Y}[i]}$. Hence we get $q_{\sigma} \he (X)_{0}\he=1$ and
$q_{\sigma} \he (X)_{i}\he=0$ for $1\le i\le r$.
\end{proof}
\par \medskip
As an immediate consequence, we get:
\begin{corollary}\label{BaProCyClCorUn}
For any quantized cycle $(X,\sigma )$, $q_{\sigma} \he (X)_{0}\he=1$.
\end{corollary}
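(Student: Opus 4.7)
The plan is to reduce immediately to Theorem \ref{BaProCyClThUn} by working locally on $X$. Since $q_{\sigma} \he (X)_{0}\he$ belongs to $\textrm{H}^{0}\be(X,\oox)$, it is a global holomorphic function on $X$, and to prove it equals the constant function $1$ it suffices to verify that its restriction to each member of a suitable open cover of $X$ is $1$.

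The first step is to check the locality of the quantized cycle class: for any open subset $U\suq Y$ with $V=X\cap U$, one has $q_{\sigma} \he (X)_{|V}=q_{\sigma _{|V}}\he(V)$. This is essentially a formal verification, as every ingredient in Definition \ref{BaProCyClDefUn}---the dual Atiyah--Kashiwara complex $\qq_{\sigma }\he$, the dual HKR isomorphism $\wh{\Gamma }_{\sigma }\he$, the functor $\rhr{\ooy}\he(\oox,\,.\,)$ and the canonical identification (\ref{BaProCyClEqUn})---is defined sheaf-theoretically and commutes with restriction to open subsets of $Y$.

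Next, fix an arbitrary point $p\in X$ and choose a neighborhood $U$ of $p$ in $Y$ equipped with holomorphic coordinates $(x_{1}\he,\dots,x_{n}\he)$ in which $X\cap U=\{x_{1}\he=\dots=x_{r}\he=0\}$; set $V=X\cap U$. Taking $E=\oo_{U}^{\,r}$ and $s=(x_{1}\he,\dots,x_{r}\he)$, conditions (1) and (2) of Theorem \ref{BaProCyClThUn} are immediate: $E$ has rank $r$, $s$ vanishes exactly on $V$, and the Jacobian criterion shows that $s$ is transverse to the zero section. For condition (3), after shrinking $U$ so that $N\ee_{X/Y}$ is trivial over $V$, both locally-free $\oo_{V}\he$-modules of rank $r$ appearing in that condition are free, hence isomorphic.

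Applying Theorem \ref{BaProCyClThUn} to the quantized analytic cycle $(V,\sigma _{|V})$ in $U$ yields $q_{\sigma _{|V}}\he(V)=1$, so in particular $(q_{\sigma } \he (X)_{0})_{|V}=1$. Since such open sets $V$ cover $X$, we conclude that $q_{\sigma } \he (X)_{0}=1$. The only substantive point in the argument is the locality verification of the first step; with that in hand, the reduction to Theorem \ref{BaProCyClThUn} is immediate and no genuine obstacle remains.
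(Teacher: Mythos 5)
Your argument is correct, and its overall shape coincides with the paper's: observe that $q_{\sigma}\he(X)_{0}\he$ is a holomorphic function on $X$, hence can be checked locally, and then invoke Theorem \ref{BaProCyClThUn} on a coordinate chart with $E$ the trivial rank $r$ bundle and $s$ the defining coordinates. The one genuine difference lies in how the given retraction $\sigma$ enters. The paper applies Theorem \ref{BaProCyClThUn} only to the canonical quantization $\pr_{1}\he$ of the local product chart $Y=X\tim U$, and then transfers the conclusion to $\sigma$ via Theorem \ref{PropDeuxAnalyticHKR}\,(1): since $N\ee_{X/Y}$ is locally trivial it carries a holomorphic connection, so $q_{\sigma}\he(X)$ is locally independent of $\sigma$. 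You instead apply the theorem directly to $(V,\sigma_{|V})$, noting that hypothesis (3) becomes vacuous after shrinking, because both $E\oti_{\oo_{U}}\he\oo_{\ba{V}}$ and $\sigma\ee\be N_{X/Y}\he$ are then free of rank $r$ over the structure sheaf of the first formal neighbourhood of $V$ in $U$ (they are modules over that sheaf rather than over $\oo_{V}\he$, a point your wording blurs but which does not affect the argument, and which follows the paper's own loose phrasing of condition (3)). Your route spares the appeal to Theorem \ref{PropDeuxAnalyticHKR}, exploiting that \emph{any} isomorphism is admissible in hypothesis (3), while the paper's route isolates the $\sigma$-dependence into a statement of independent interest; both rest on the same formal locality property $q_{\sigma}\he(X)_{|V}=q_{\sigma_{|V}}\he(V)$, which you state explicitly and which the paper uses implicitly when it reduces to the product situation.
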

\begin{proof}
 The class $q_{\sigma} \he (X)_{0}\he$ is a holomorphic function on $X$, so that it can be computed locally. Hence we can assume that $X$ is open in $\C^{n}\be$ and $Y=X\tim U$ where $U$ is open in $\C^{r}\be$. If $E$ is the trivial rank $r$ bundle on $Y$ and $s$ is the section $(z_{n+1}\he,\dots, z_{n+r}\he)$, then Theorem \ref{BaProCyClThUn} yields $ q_{\,\pr_{1}} (X)=1$. Since $N_{X/Y}\he$ is trivial, $q_{\sigma} \he (X)$ is independent of $\sigma $ by Proposition \ref{PropDeuxAnalyticHKR} (1). This gives the result.
\end{proof}
We now turn to the case of the diagonal injection. For any complex manifold $X$, we identify the co\-nor\-mal bundle of $\Delta_X\he$ in $X \times X\he$ with $\Omega_{X}^{1}$ as follows: for any germ on holomorphic function $f$ on $X$, the local section $\textrm{pr}\ee_1(f)-\textrm{pr}\ee_2(f)$ of the conormal sheaf of the diagonal corresponds to the local section $df$ of the cotangent bundle of $X$.
\begin{theorem}\label{BaProCyClThDeux}
For any complex manifold $X$, $q_{\,\emph{pr}_1}(\Delta_X \he)$ is the Todd class of $X$.
\end{theorem}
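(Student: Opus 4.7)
The plan is to identify $q_{\,\textrm{pr}_1}(\Delta_X)$ with a classical Hochschild-cohomological invariant whose coincidence with the Todd class is already established. For the diagonal, the conormal bundle $N^*_{\Delta_X/X\times X}$ is canonically isomorphic to $\Omega_X^1$ under the convention fixed immediately before the theorem, so $q_{\,\textrm{pr}_1}(\Delta_X)$ naturally lives in $\bigoplus_i \textrm{H}^i(X, \Omega_X^i)$, the natural home of the Todd class. Corollary \ref{ThUn}, or more precisely its dual-HKR analogue obtained by repeating the argument of Theorem \ref{PropDeuxAnalyticHKR}(2) at the level of the dual AK complex, ensures that the class $q_\sigma(\Delta_X)$ is independent of the choice $\sigma \in \{\textrm{pr}_1, \textrm{pr}_2\}$, which eliminates any chirality issue.

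Next, I would unwind Definition \ref{BaProCyClDefUn}. By construction, $q_{\,\textrm{pr}_1}(\Delta_X)$ is the morphism in $D^{\textrm{b}}(\oox)$ obtained by composing the trace isomorphism (\ref{BaProCyClEqUn}) with the natural arrow induced by $\oo_{X \times X} \twoheadrightarrow \oox$, and finally with the dual HKR isomorphism $\wh{\Gamma}_{\textrm{pr}_1}$. Transposing this composition via the duality (\ref{BaProCyClEqDeux}) and using the explicit model of $\wh{\Gamma}_{\textrm{pr}_1}$ provided by $\qq_{\textrm{pr}_1}$, one recognizes $q_{\,\textrm{pr}_1}(\Delta_X)$ as the Kashiwara--Markarian characteristic class $\mu(\oox) \in \bigoplus_i \textrm{H}^i(X, \Omega_X^i)$ of the structure sheaf of the diagonal, that is, the Hochschild Euler class of $\oox$ paired against the canonical trace of the diagonal embedding.

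With this identification in hand, the theorem reduces to the assertion $\mu(\oox) = \textrm{td}(X)$. This is the main result of \cite{G} (indeed, the introduction records that Theorem \ref{BaProCyClThDeux} is equivalent to it), and analogous statements appear in \cite{MA}, \cite{RA1}, \cite{RA2}. The main obstacle in carrying out the plan is the bookkeeping of the many sign and duality conventions: one must check that the sign $(-1)^{r(r+1)/2}$ built into (\ref{BaProCyClEqUn}), the signs $(-1)^{(r-i)(r-i-1)/2}$ appearing in Proposition \ref{PropDeuxBisHKR}, and the asymmetry between the left and right duality morphisms $D^\ell$ and $D^r$ used in Proposition \ref{HKR2} assemble correctly to reproduce Markarian's normalization of the Todd class. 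Once these sign verifications are in place, the theorem follows by directly invoking the cited identification, applied to the tautological Hochschild Euler class of the diagonal.
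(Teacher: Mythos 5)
Your overall strategy --- reduce the statement to the main result of \cite{G} --- is the same as the paper's, but the step you set aside as ``sign bookkeeping'' is precisely the crux, and your proposal neither carries it out nor supplies the device the paper uses to bypass it. The identification of $q_{\,\textrm{pr}_1}(\Delta_X)$ with the class of \cite{G} is not just a finite list of sign checks: the result of \cite{G} is stated for \emph{some specific} isomorphism $\oox\simeq\rhr{\oo_{X\times X}}(\oox,\omega_X\boxtimes\oox)$, whereas $q_{\,\textrm{pr}_1}(\Delta_X)$ is built from the isomorphism (\ref{BaProCyClEqUn}) normalized in this paper; a priori these two choices differ by an automorphism of $\oox$ in $D^{\textrm{b}}(\oox)$, i.e.\ by a nowhere vanishing holomorphic function $\varphi$, not merely by a sign. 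Without controlling $\varphi$ you only obtain $q_{\,\textrm{pr}_1}(\Delta_X)=\varphi\,\textrm{td}(X)$, and your write-up gives no means of pinning $\varphi$ down --- tracing the normalizations through \cite{G} and Markarian's conventions is exactly the delicate comparison you acknowledge as ``the main obstacle'' and then leave unverified. Likewise, the claim that unwinding Definition \ref{BaProCyClDefUn} ``recognizes'' $q_{\,\textrm{pr}_1}(\Delta_X)$ as the Hochschild Euler class of $\oox$ is an assertion, not an argument.

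The paper closes this gap with an idea absent from your proposal: the a priori normalization $q_{\sigma}(X)_0=1$ of Corollary \ref{BaProCyClCorUn}, itself deduced from the transverse-section computation of Theorem \ref{BaProCyClThUn} applied locally together with Theorem \ref{PropDeuxAnalyticHKR}(1). Since the degree-zero component of $\varphi\,\textrm{td}(X)$ is $\varphi$, this forces $\varphi=1$ and finishes the proof with no matching of conventions against \cite{G} at all. Your appeal to Corollary \ref{ThUn} (independence of $\textrm{pr}_1$ versus $\textrm{pr}_2$) is beside the point, since the statement concerns $\textrm{pr}_1$ only. To repair the proposal, either carry out the full comparison of normalizations with \cite{G} explicitly, or --- much more economically --- add the degree-zero normalization argument via Corollary \ref{BaProCyClCorUn}; as it stands the proof is incomplete.
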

\begin{proof}
If $\qq$ is the dual Atiyah-Kashiwara complex associated with $(\Delta _{X}\he,\pr_{1}\he)$, the main result of \cite{G} is that for a specific isomorphism between $\oox$ and $\rhr{\oo_{X \times X}}(\oox, \omega _{X}\he \boxtimes \oox)$, the composition
\[
\xymatrix@C=25pt{
{\oox\simeq\rhr{\oo_{X \times X}\he}(\oox, \omega _{X}\he \boxtimes \oox)\ar[r]}&{\rhr{\oo_{X \times X}\he}(\oox,\omega _{X}\he)\simeq\hoo_{\oo_{X \times X}\he}\he(\oox,\qq)\simeq\bop_{i=0}^{r}\Omega _{X}^{i}[i]}}
\]
is the Todd class of $X$. It follows that $q_{\,\textrm{pr}_1}(\Delta_X \he)=\varphi \, \textrm{td}(X)$ where $\varphi$ is a nowhere zero holomorphic function on $X$. By Corollary \ref{BaProCyClCorUn}, $\varphi=1$.
\end{proof}
To conclude this section, we compute the quantized cycle class in the case of divisors. For any cohomology class $\delta $ in $H^{1}\be(X,N\ee_{X/Y})$, we denote by $\LL_{\delta }\he$ the associated line bundle on $\bax$. Then, for any $\LL$ in $\textrm{Pic}(\bax)$ such that $j\ee\be\LL\simeq\oox$, there exists a unique cohomology class $\delta $ in $H^{1}\be(X,N\ee_{X/Y})$ such that $\LL$ is isomorphic to $\LL_{\delta }\he$ (c.f. Remark \ref{interpretation}).
\begin{theorem}\label{divisor}
 Let $(X, \sigma)$ be a quantized analytic cycle of codimension one in a complex manifold $Y$\!, and let $\delta $ be the cohomology class in $H^{1}\be(X,N\ee_{X/Y})$ such that $\smash[b]{\ba{j}\ee\be\ooy(X)\oti_{\oo\baxx}\he\sigma \ee\be N\ee_{X/Y}}$ is isomorphic to $\LL_{\delta }\he$. Then $\smash{q_{\sigma} \he (X)=1+\delta.}$
\end{theorem}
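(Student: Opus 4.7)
The plan is to combine a local application of Theorem \ref{BaProCyClThUn} with a \v{C}ech-cohomological comparison to extract the class $\delta$.

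First, by Corollary \ref{BaProCyClCorUn} the degree-zero component of $q_\sigma(X)$ equals $1$, so only the component in $H^1(X, N\ee_{X/Y})$ remains to be identified. Let $s \in \Gamma(Y, \ooy(X))$ denote the canonical section vanishing on $X$, so that the Koszul complex $L = [\ooy(-X) \to \ooy]$ with differential given by multiplication by $s$ resolves $\oox$ over $\ooy$. This resolution gives the natural identification $\rhr{\ooy}(\oox, \ooy) \simeq N_{X/Y}[-1]$ compatible (modulo the prescribed sign) with the identification $\omega_{X/Y} \simeq \rhr{\ooy}(\oox, \ooy)$ of (\ref{BaProCyClEqUn}).

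Next, choose a locally-finite open cover $\mathfrak{U} = \{U_\alpha\}_{\alpha \in J}$ of $X$ together with isomorphisms $h_\alpha \colon \ba{j}\ee\ooy(X)|_{U_\alpha} \simeq \sigma\ee N_{X/Y}|_{U_\alpha}$ trivialising $\LL_\delta$ locally. By the geometric interpretation of $\delta$ in Remark \ref{interpretation}, the transition cocycle $h_\beta \circ h_\alpha^{-1} = 1 + \mathfrak{c}_{\alpha\beta}$ with $\mathfrak{c}_{\alpha\beta} \in N\ee_{X/Y}(U_{\alpha\beta})$ represents $\delta$ in $H^1(X, N\ee_{X/Y})$. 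On each $U_\alpha$, condition~(3) of Theorem \ref{BaProCyClThUn} holds for $E = \ooy(X)$, so the construction in the proof of that theorem yields a local quasi-isomorphism $\gamma_\alpha \colon L|_{U_\alpha} \to \pp_\sigma|_{U_\alpha}$ realizing the local identity $q_\sigma(X)|_{U_\alpha} = 1$.

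The remaining step is to compare the $\gamma_\alpha$ on overlaps. The obstruction to patching them into a single global quasi-isomorphism is controlled by $\{\mathfrak{c}_{\alpha\beta}\}$. Following the \v{C}ech-bicomplex machinery from the proofs of Theorem \ref{ThUnCompHKR} and Theorem \ref{ModuleLunaireUn}, the discrepancies $\gamma_\beta - \gamma_\alpha$ assemble into a morphism from $L$ into a \v{C}ech resolution of $\pp_\sigma$; tracing its image under the dual HKR isomorphism $\wh{\Gamma}_\sigma$ through the identifications (\ref{BaProCyClEqUn}) and (\ref{BaProCyClEqDeux}) will show that the patching cocycle contributes exactly to the $H^1(X, N\ee_{X/Y})$-component of $q_\sigma(X)$, yielding $q_\sigma(X) = 1 + \delta$.

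The main obstacle lies in the sign and normalisation bookkeeping in this last step: the local formulas for $\gamma_\alpha$ involve several sign conventions (in particular from the duality and Koszul conventions fixed in \S~\ref{cup}), and these must combine correctly with the chosen \v{C}ech representative to produce $+\delta$ rather than a sign-twisted or rescaled version. This verification is technical but routine, paralleling the explicit \v{C}ech computations already carried out in the proof of Theorem \ref{ThUnCompHKR} and at the end of the proof of Theorem \ref{BaProCyClThUn}.
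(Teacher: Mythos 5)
Your proposal has a genuine gap, and it is not merely that the last step is deferred: the mechanism you propose for producing $\delta$ does not work in codimension one. The local quasi-isomorphisms $\gamma_\alpha\colon\LL\to\pp_{\sigma}\he$ built as in the proof of Theorem \ref{BaProCyClThUn} in fact agree on overlaps, so your \v{C}ech cocycle of discrepancies vanishes. Indeed, for $r=1$ one has $(\pp_{\sigma}\he)_{0}=\oo\baxx$ with $\gamma_{0}$ the canonical surjection $\ooy\to\oo\baxx$, and $(\pp_{\sigma}\he)_{-1}=\bwi{2}{\sigma}{\oo\baxx}\simeq N\ee_{X/Y}$ with differential the canonical inclusion $\jj_{X}/\jj_{X}^{2}\subset\ooy/\jj_{X}^{2}$ (so for divisors $\pp_{\sigma}\he$ does not even depend on $\sigma$); the component $\gamma_{-1,\alpha}$ depends only on the restriction $h_{\alpha}|_{X}$ of your local isomorphism to $X$ (not to $\bax$), and the normalization $s\ee\circ{}^{t}h_{\alpha}=\tau$ forces $h_{\alpha}|_{X}$ to be the canonical isomorphism $\ooy(X)|_{X}\simeq N_{X/Y}\he$ induced by $s$. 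Equivalently, your transition cocycle $1+\mathfrak{c}_{\alpha\beta}$ restricts to the identity on $X$, so it leaves the $\gamma_\alpha$ unchanged. Consequently the "patching obstruction" you want to trace is zero, and if the rest of the argument of Theorem \ref{BaProCyClThUn} required only a global chain map $\gamma$, your scheme would "prove" $q_{\sigma}\he(X)=1$ for every divisor, which contradicts the statement. What condition (3) of Theorem \ref{BaProCyClThUn} really provides, and what fails for a general divisor, is an identification of $\ooy(-X)\oti_{\ooy}\he\oo\baxx$ with $\sigma\ee N\ee_{X/Y}$ over the whole first neighbourhood: this is needed on the dual side of the computation, namely to identify $\LL\oti_{\ooy}\he\oo\baxx$ with the Koszul complex of the canonical cosection $\tau$ of $\sigma\ee N\ee_{X/Y}$ when one transports $\wh{\Gamma}_{\sigma}\he$ and the comparison morphism $\ti{\psi}$ (the global version of Proposition \ref{PropDeuxBisHKR}). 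The obstruction to such an identification on $\bax$ is exactly the line bundle $\LL_{\pm\delta}$, and this is where $\delta$ enters; your proposal never engages this step, and the sentence "tracing its image under $\wh{\Gamma}_{\sigma}$ \dots will show \dots $1+\delta$" defers precisely the whole content of the theorem beyond Corollary \ref{BaProCyClCorUn}.

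For comparison, the paper does not use any \v{C}ech patching: it computes the defining morphism $\Delta\colon\omega_{X/Y}\he\to\oox\oplus N_{X/Y}\he[-1]$ directly by means of explicit two-term complexes built from the line bundles $\nn=\sigma\ee N_{X/Y}\he$ and $\nn'=\ba{j}\ee\ooy(X)$ on $\bax$ and their canonical sequences $\oo\baxx\to\nn\to N_{X/Y}\he\to0$, $\oo\baxx\to\nn'\to N_{X/Y}\he\to0$. After tensoring by the conormal line and pushing forward, $\Delta$ is rewritten through the extension $0\to N\ee_{X/Y}\to\LL_{-\delta}\he\to\oox\to0$, whose extension class, read in $\textrm{Hom}_{D^{\textrm{b}}\be(\oox)}(\oox,\oox\oplus N\ee_{X/Y}[1])\simeq H^{0}\be(X,\oox)\oplus H^{1}\be(X,N\ee_{X/Y})$, gives $1+\delta$. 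Any repair of your approach would have to carry out an equivalent comparison of $\nn$ and $\nn'$ on the first infinitesimal neighbourhood, not merely on $X$.
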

\begin{proof}
 Let $\nn$ (resp.\ $\nn'$) denote the holomorphic line bundle $\sigma \ee\be N_{X/Y}\he$ (resp.\ $\ba{j}\ee\be\ooy(X)$) on $\bax$. Then we have two natural exact sequences
\[
\xymatrix@C=17pt{\oo\baxx\ar[r]^-{i}&\nn\ar[r]^-{\pi }&N\he_{X/Y}\ar[r]&0}\qquad\textrm{and}\qquad
\xymatrix@C=17pt{\oo\baxx\ar[r]^-{i'}&\nn'\ar[r]^-{\pi' }&N\he_{X/Y}\ar[r]&0}
\]
If $\apl{\Delta }{N\he_{X/Y}[-1]}{\oox\oplus N\he_{X/Y}[-1]}$ is the morphism in $D^{\textrm{b}}\be(\oox)$ defining $q_{\sigma} \he (X)$, then $\Delta $ is obtained as the composition of quasi-isomorphisms
\[
\xymatrix@C=35pt@R=30pt{
&\oox\ar[r]^-{i}\ar[d]_-{-i'}&\nn\ar[d]^-{(-i'\oti\,\id,-\pi )}&\oox\ar[l]_-{i}\ar[d]\ar@<1.499ex>@{}[d]_(.45){\textrm{\LARGE o}}\\
N_{X/Y}\he&\nn'\ar[l]_-{-\pi '}\ar[r]_-{(\id\oti\,i,\,0)}&[\,\nn'\oti\nn\,]\oplus N_{X/Y}\he\ar[d]^-{(-\pi '\oti\pi ,\,0)}&N_{X/Y}\he\ar[l]^-{(0,\,\id)}\\
&&N_{X/Y}^{\,\oti\,2}
}
\]
Let $\Delta '=\ba{j}\ei\he\Delta\, \lltens{}_{\oo\baxx}\he\,\nn'[1]=\ba{j}\ei\he\bigl( \Delta\, \lltens{}_{\oox}\he N\ee_{X/Y}[1]\bigr).$ If $s$, $s'$, $t$, $t'$ are the maps occurring in the two natural sequences
\[
\xymatrix@C=17pt{0\ar[r]&N\ee_{X/Y}\ar[r]^(0.5){s}&\oo\baxx\ar[r]^-{t}&\oox\ar[r]&0}\qquad\textrm{and}\qquad
\xymatrix@C=17pt{0\ar[r]&N\ee_{X/Y}\ar[r]^(0.5){s'}&\LL_{-\delta }\he\ar[r]^-{t'}&\oox\ar[r]&0}
\]
then $\Delta '$ is the composition
\[
\xymatrix@C=35pt@R=30pt{&N\ee_{X/Y}\ar[r]^-{s'}\ar[d]_-{-s}&\LL_{-\delta }\he\ar[d]_-{(-t',-t')}&N\ee_{X/Y}\ar[l]_(0.5){s'}\ar[d]\ar@<1.48ex>@{}[d]_(.47){\textrm{\LARGE o}}\\
\oox&\oo\baxx\ar[l]^-{-t}\ar[r]_-{(t,\,0)}&\oox\oplus\oox&\oox\ar[l]^-{(0,\,\id)}
}
\]
Thus, as a morphism in $D^{\textrm{b}}\be(\C_{Y}\he)$, $\Delta '$ is the  composition
\[
\xymatrix@C=35pt@R=30pt{&\LL_{-\delta }\he\ar[d]_-{(-t',-t')}&N\ee_{X/Y}\ar[l]_(0.5){s'}\ar[d]\ar@<1.48ex>@{}[d]_(.47){\textrm{\LARGE o}}\\
\oox\ar[r]_(0.5){(-\id,\,0)}&\oox\oplus\oox&\oox\ar[l]^-{(0,\,\id)}
}
\]
Therefore, via the isomorphism
$\textrm{Hom}_{D^{\textrm{b}}\be(\oox)}\he(\oox,\oox\oplus N\ee_{X/Y}[1])\simeq H^{0}\be(X,\oox)\oplus H^{1}\be(X,N\ee_{X/Y})$, we have $\Delta '=1+\delta $. This yields the result.
\end{proof}

\subsection{The six operations for a closed immersion}\label{six}
We denote by $j\ee\be$ (resp.\ $j\pe\be$) the derived pullback (resp.\ exceptional inverse image) induced by the closed immersion $j$. More explicitly,
\begin{align*}
 \apl{j\ee\be}{D^{-}\be(\ooy)}{D^{-}\be(\oox)}&&j\ee\be\ff&=\oox\lltensr{}_{\ooy}\he\ff\\
\apl{j\pe\be}{D^{+}\be(\ooy)}{D^{+}\be(\oox)}&&j\pe\be\ff&=\rhr{\ooy}(\oox,\ff)
\end{align*}
These two functors satisfy the adjunction formulae
\[
\begin{cases}
\textrm{Hom}_{D^{\textrm{b}}\be(\ooy)}\he(\ff,j\ei\he\g)\simeq \textrm{Hom}_{D^{\textrm{b}}\be(\oox)}(j\ee\be\ff,\g)\\
\textrm{Hom}_{D^{\textrm{b}}\be(\ooy)}\he(j\ei\he\g,\ff)\simeq \textrm{Hom}_{D^{\textrm{b}}\be(\oox)}(\g,j\pe\be\ff)
\end{cases}
\]
as well as the projection formula
\[
j\ei \he (j\ee\be\ff \, \lltens{}_{\ooy}\he \, \g)\simeq \ff \, \lltens{}_{\oox}\he \, j\ei \he \g
\]
\par \medskip
for any $\ff$ and $\g$ in $D^{\textrm{b}}\be(\ooy)$ and $D^{\textrm{b}}\be(\oox)$ respectively.
For any element $\ff$ in $D^{\textrm{b}}\be (\oox)$, there is a natural isomorphism
\[
j \ei \he (j \ee \be j \ei \he \ff) \simeq  j \ei \he (j \ee \be j \ei \he \oox \, \lltens{}_{\oox} \he \ff)
\]
\par \medskip
 in ${D^{\textrm{b}}\be(\oox)}\he$ obtained by the chain
\[
j\ei \he (j \ee \be j \ei \he \oox \, \lltens{}_{\oox} \he\ff) \simeq j \ei \he \oox \, \lltens{}_{\ooy} \he \, j \ei \he \ff \simeq  j \ei \he \ff \, \lltens{}_{\ooy}\he \,  j \ei \he \oox \simeq j\ei \he j \ee \be j \ei \he \ff
\]
using the projection formula twice.
\begin{remark}
It is important to notice that for general pairs $(X,Y)$ of complex analytic cycles, the objects $j \ee \be j \ei \he \ff$ and $j \ee \be j \ei \he \oox \, \lltens{}_{\oox} \he \ff$ are not always isomorphic in $D^{\textrm{b}}\be(\oox)$. This can be seen as follows: assuming that $\ff$ is locally free, it is proved in \cite[\S 2.6]{AC} that if $j \ee \be j \ei \he \, \ff$ is formal in $D^{\textrm{b}}\be(\oox)$ then $\ff$ can be lifted to a locally-free sheaf on $\bax$. Therefore, if $N\ee_{X/Y}$ can be lifted to a locally-free sheaf on $\bax$ but $\ff$ cannot, then ${j \ee \be j \ei \he \oox \, \lltens{}_{\oox} \he \ff}$ is formal and $j \ee \be j \ei \he \ff$ is not. Of course, if $j$ admits an infinitesimal retraction $\sigma$, both objects are isomorphic but the isomorphism cannot in general be chosen independent of $\sigma$.
\end{remark}
\par \medskip
For any elements $\ff$ and $\g$ in $D^{\textrm{b}}_{\textrm{coh}}(\ooy)$, the natural morphism
\[
\xymatrix@C=25pt{
\ff \,\lltensl{}_{\ooy}\he\rhr{\ooy}(\oox,\g)\ar[r]&\rhr{\ooy}(\oox,\ff\,\lltens{}_{\ooy}\he\g).
}
\]
\par \smallskip
is an isomorphism. This means that we have an isomorphism
$j\ee\be(\,.\,)\,\lltens{}_{\oox}\he j\pe\be(\,.\,)\fl j\pe\be(\,.\,\,\lltens{}_{\ooy}\he\,.\,)$
of bifunctors from $D^{\textrm{b}}_{\textrm{coh}}(\ooy)\times D^{\textrm{b}}_{\textrm{coh}}(\ooy)$ to $D^{\textrm{b}}_{\textrm{coh}}(\oox)$.
\subsection{Kashiwara's isomorphism}\label{kash}
Let $\hh\hh_{Y}\he(X)$ be the \emph{generalized derived Hochschild complex}. It is defined by
\begin{equation}\label{KashIsoEqUn}
 \hh\hh_{Y}\he(X)=j\ee\be j\ei\he\oox.
\end{equation}
\par \medskip
Then $\hh\hh_{Y}\he(X)$ is a ring object in $D^{\textrm{b}}\be(\oox)$, the multiplication being given by the chain of morphisms
\[
\xymatrix@C=25pt{
j\ee\be j\ei\he\oox\,\lltens{}_{\oox}\he\, j\ee\be j\ei\he\oox\ar[r]^-{\sim}&j\ee\be\bigl( j\ei\he\oox\,\lltens{}_{\ooy}\he \,j\ei\he\oox\bigr)\ar[r]&j\ee\be\bigl( j\ei\he\oox\oti_{\ooy}\he j\ei\he\oox\bigr)=j\ee\be j\ei\he\oox.
}
\]
The object $\bop\limits_{i=0}^{r}\bw{i}N\ee_{X/Y}[i]$ is also a ring object in $D^{\textrm{b}}\be(\oox)$, with multiplication given by cup-product.
\begin{proposition}\label{KashIsoPropUn}
Let $(X, \sigma)$ be a quantized analytic cycle of codimension $r$ in a complex manifold $Y.$ Let $\sigma $ be a retraction of $\ba{j}$. Then
\[
\xymatrix@C=25pt{\Gamma _{\sigma }\he\,:\,\hh\hh_{Y}\he(X)\ar[r]^-{\sim}&\bop_{i=0}^{r}\bw{i}N\ee_{X/Y}[i]}
\]
is a ring isomorphism.
\end{proposition}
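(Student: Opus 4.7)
The plan is to exhibit both $\hh\hh_Y(X)$ and the exterior algebra $\bigoplus_{i=0}^r\bw{i}{N\ee_{X/Y}}[i]$ as incarnations of a single dg-algebra constructed from $\pp_\sigma$. First I would note that $\pp_\sigma$, equipped with the product $*$ of Definition \ref{DefUnDgAlg}, is a graded-commutative dg-$\oo\baxx$-algebra (the commutative situation discussed at the end of \S\ref{DgAlg} applies, since $\aaa$ is the exterior algebra of $\bb$ over $\oox$), and the quasi-isomorphism $\pp_\sigma \to \oox$ is a morphism of dg-$\oo\baxx$-algebras. Tensoring with $\oox$ over $\oo\baxx$ then produces a graded-commutative dg-$\oox$-algebra $\oox\otimes_{\oo\baxx}\pp_\sigma$ which, by the argument of Proposition \ref{PropUnAnalyticHKR}, represents $\oox\lltensr{}_{\ooy}\oox = \hh\hh_Y(X)$ in $D^{\textrm{b}}(\oox)$.

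Next I would verify that the ring structure on $\hh\hh_Y(X)$ described in the statement coincides with the one induced on $\oox\otimes_{\oo\baxx}\pp_\sigma$ by $*$. The multiplication in the statement is the composition of the projection formula isomorphism $j\ee\be j\ei\oox\lltens{}_{\oox}j\ee\be j\ei\oox \simeq j\ee\be(j\ei\oox\lltens{}_{\ooy}j\ei\oox)$ with the canonical morphism $j\ei\oox\lltens{}_{\ooy}j\ei\oox \to j\ei\oox\otimes_{\ooy}j\ei\oox = j\ei\oox$. Using $\pp_\sigma$ as the resolution and the fact that $\pp_\sigma \to \oox$ is a morphism of dg-algebras, this composition factors through the product map $\pp_\sigma\otimes_{\oo\baxx}\pp_\sigma \to \pp_\sigma$ given by $*$; hence the two ring structures on $\hh\hh_Y(X)$ agree.

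Finally, a direct calculation identifies the induced product on $\oox\otimes_{\oo\baxx}\pp_\sigma \simeq \bigoplus_i\bw{i}{N\ee_{X/Y}}[i]$ with the cup-product. Through the splitting (\ref{EqDeuxDgAlg}), an element of $\bwi{k+1}{\oox}{\bb}$ is a pair $(\ubi,\ubj)$ with $\ubi \in \bw{k+1}{N\ee_{X/Y}}$ and $\ubj \in \bw{k}{N\ee_{X/Y}}$; tensoring with $\oox$ over $\oo\baxx$ kills the ideal consisting of the $(\ubi,0)$'s and retains only $\ubj$. Formula (\ref{EqTroisDgAlg}) gives
\[
(\ubi_1,\ubj_1)*(\ubi_2,\ubj_2) = (\ubi_1\wedge\ubj_2 + (-1)^{k}\ubj_1\wedge\ubi_2,\,\ubj_1\wedge\ubj_2),
\]
whose second component is $\ubj_1\wedge\ubj_2$, the wedge product, so $\Gamma_\sigma$ intertwines the two multiplications. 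The main obstacle is the second step, namely the rigorous identification of the ring structure given by the chain of morphisms in the statement with the one coming from the dg-algebra product $*$ through the resolution $\pp_\sigma$; this requires tracking the naturality of the projection formula and the compatibility of $\lltens{} \to \otimes$ with algebra maps. Once granted, the explicit calculation in the last paragraph finishes the proof.
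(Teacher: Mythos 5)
Your proposal is correct and follows essentially the same route as the paper: replace $j\ei\he\oox$ by the AK complex $\pp_{\sigma}\he$ with its product $*$, use naturality of the projection formula to match the induced multiplication with the one defining $\hh\hh_{Y}\he(X)$, and read off the cup-product from (\ref{EqTroisDgAlg}). The middle step you flag as "the main obstacle" is exactly the content of the commutative diagram the paper writes down (comparing the rows for $\textrm{H}^{0}\be(j\ee\be)(\pp_{\sigma }\he)$, $j\ee\be\pp_{\sigma }\he$ and $j\ee\be j\ei\he\oox$), so nothing beyond that routine naturality check is missing.
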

\begin{proof}
 We consider the following commutative diagram
\par\medskip
\[
\xymatrix@C=45pt@R=20pt{
\textrm{H}^{0}\be(j\ee\be)(\pp_{\sigma }\he)\oti_{\oox}\he\! \textrm{H}^{0}\be(j\ee\be)(\pp_{\sigma }\he)\ar[r]^-{\sim}&\textrm{H}^{0}\be(j\ee\be)(\pp_{\sigma }\he\oti_{\ooy}\he\!\pp_{\sigma }\he)\ar[r]^-{*}&\textrm{H}^{0}\be(j\ee\be)(\pp_{\sigma }\he)\\
j\ee\be\pp_{\sigma }\he\,\lltens{}_{\oox}\he j\ee\be\pp_{\sigma }\he\ar[u]^-{\sim}\ar[r]^-{\sim}\ar[d]_-{\sim}&j\ee\be\bigl( \pp_{\sigma }\he\,\lltens{}_{\ooy}\he\pp_{\sigma }\he\bigr)\ar[r]\ar[u]\ar[d]_-{\sim}&j\ee\be\pp_{\sigma }\he\ar[u]_-{\sim}\ar[d]^-{\sim}\\
j\ee\be j\ei\he\oox\,\lltens{}_{\oox}\he\, j\ee\be j\ei\he\oox\ar[r]^-{\sim}&j\ee\be \bigl( j\ei\he\oox\,\lltens{}_{\ooy}\he\, j\ei\he\oox\bigr)\ar[r]&j\ee\be j\ei\he\oox
}
\]
\par \bigskip
where $\smash[b]{\apl{*}{\pp_{\sigma }\he\oti_{\ooy}\he\!\pp_{\sigma }\he}{\pp_{\sigma }\he}} $ has been constructed in \S~\ref{DgAlg}. By (\ref{EqTroisDgAlg}), the composition of the two arrows of the first line is the cup-product map via the isomorphism  $\smash[b]{\textrm{H}^{0}\be(j\ee\be)(\pp_{\sigma }\he)\simeq\bop\limits_{i=0}^{r}\bw{i}N\ee_{X/Y}[i]} $. This finishes the proof.
\end{proof}
\begin{remark}\label{KashIsoRemUn}
 This proposition holds in a more general setting, namely when $N\ee_{X/Y}$ extends to $\bax$ and $\Gamma _{\sigma }\he$ is replaced by $\Gamma _{\mathcal{K}}\he$ (where $\mathcal{K}$ is the corresponding untwisted Arinkin-C\u{a}ld\u{a}raru complex). We refer the reader to \cite{AC} for more details.
\end{remark}
The object $j\pe\be j\ei\he\oox$ can also be naturally equipped with an action of $\hh\hh_{Y}\he(X)$. This is done using the chain of morphisms
\begin{equation}\label{KashIsoEqDeux}
j\ee\be j\ei\he\oox\,\lltens{}_{\oox}\he\,j\pe\be j\ei\he\oox \flexp{\sim}j\pe\be\bigl( j\ei\he\oox\,\lltens{}_{\ooy}\he\,j\ei\he\oox\bigr)\fl j\pe\be\bigl( j\ei\he\oox\oti_{\ooy}\he\! j\ei\he\oox\bigr)\flexp{\sim}j\pe\be j\ei\he\oox
\end{equation}
\begin{proposition}\label{KashIsoThUn}
For any quantized analytic cycle $(X, \sigma)$ of codimension $r$ in a complex manifold $Y$\!, the isomorphism
\[
\xymatrix@C=25pt@R=1pt{(\Gamma _{\sigma }\he, \wh{\Gamma} _{\sigma }\he)\,:\,(\hh\hh_{Y}\he(X), \, \,j\pe\be j\ei\he\oox)
\ar[r]^-{\sim}&\bigl(\bop_{i=0}^{r}\bw{i}N\ee_{X/Y}[i], \bop_{i=0}^{r}\bw{i}N\he_{X/Y}[-i]\bigr)
}
\]
preserves the module structure, where $\bop_{i=0}^{r}\bw{i}N\ee_{X/Y}[i]$ acts on $\bop_{i=0}^{r}\bw{i}N\he_{X/Y}[-i]$ by  left contraction.
\end{proposition}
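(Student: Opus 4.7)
The strategy mirrors the proof of Proposition \ref{KashIsoPropUn}, with the product $*$ of (\ref{EqTroisDgAlg}) on $\pp_\sigma$ replaced by the companion pairing $\wh{*} \colon \pp_{\sigma} \otimes_{\oo\baxx} \qq_{\sigma} \to \qq_{\sigma}$ of (\ref{EqOnzeBis}). Tensoring with $\id_{\omega_{X/Y}}$ yields a morphism of complexes of $\oo\baxx$-modules
\[
\pp_{\sigma} \otimes_{\oo\baxx} (\qq_{\sigma} \otimes_{\oox} \omega_{X/Y}) \fl \qq_{\sigma} \otimes_{\oox} \omega_{X/Y}
\]
which covers the multiplication $\oox \otimes_{\ooy} \oox \to \oox$ via the quasi-isomorphisms $\pp_{\sigma} \to \oox$ and $\oox \to \qq_{\sigma} \otimes_{\oox} \omega_{X/Y}$. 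Plugging this chain-level pairing into a commutative diagram parallel to the one in the proof of Proposition \ref{KashIsoPropUn} (with the second copy of $\pp_{\sigma}$ replaced by $\qq_{\sigma} \otimes_{\oox} \omega_{X/Y}$, and with the projection formula $j\ee(\,\cdot\,) \lltens{}_{\oox} j\pe(\,\cdot\,) \simeq j\pe(\,\cdot\, \lltens{}_{\ooy} \,\cdot\,)$ replacing the monoidality of $j\ee$ used in the algebra case) shows that the module action (\ref{KashIsoEqDeux}) is represented on cohomology by the pairing induced by $\wh{*} \otimes \id$.

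Next I would compute this cohomology-level pairing explicitly. Via (\ref{EqDeuxDgAlg}), the HKR component of $\pp_{\sigma}$ in degree $-l$ is the summand $\bw{l}{N\ee_{X/Y}}$ embedded as $\ubj \mapsto (0, \ubj) \in \bw{l+1}{\bb}$. For $\qq_{\sigma}$, the annihilator computation $\hoo_{\ooy}(\oox, \bw{p}{\bb}) \simeq \bw{p}{N\ee_{X/Y}}$ --- immediate from formula (\ref{EqQuatreDgAlg}) together with the freeness of $N\ee_{X/Y}$ --- identifies the dual-HKR component in degree $-p$ with the $\ubi$-summand $\ubi \mapsto (\ubi, 0) \in \bw{p}{\bb}$. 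Substituting into (\ref{EqOnzeBis}) gives
\[
\wh{*}\bigl((0, \ubj_{1}), \, (\ubi_{2}, 0)\bigr) \, = \, \bigl((-1)^{l}\, \ubj_{1} \we \ubi_{2},\, 0\bigr),
\]
whose dual-HKR component is $(-1)^{l}\, \ubj_{1} \we \ubi_{2} \in \bw{l+p}{N\ee_{X/Y}}$. Transporting this through the left-duality isomorphism $D^{\ell}$ of (\ref{right}), which identifies $\bw{p}{N\ee_{X/Y}} \otimes \det N_{X/Y}$ with $\bw{r-p}{N_{X/Y}}$, and using the module identity $(\phi \we \psi) \lrcorner \xi = \phi \lrcorner (\psi \lrcorner \xi)$ of left contraction, the wedge pairing on $\bw{}{N\ee_{X/Y}}$ becomes left contraction on $\bw{}{N_{X/Y}}$.

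The main obstacle will be the sign reconciliation in the last step: the factor $(-1)^{l}$ produced by (\ref{EqOnzeBis}) must combine correctly with the $(-1)^{r}$ built into the differential of $\qq_{\sigma}$ (cf.\ Definition \ref{DefDeuxHKR}) and with the degree-dependent signs implicit in $D^{\ell}$, in precisely the spirit of the sign computation carried out in Proposition \ref{PropDeuxBisHKR}, where comparing HKR and dual HKR produced the factors $(-1)^{(r-i)(r-i-1)/2}$. Once these conventions are matched, the resulting pairing on $\bop_{i=0}^{r}\bw{i}{N\ee_{X/Y}}[i] \otimes \bop_{j=0}^{r}\bw{j}{N_{X/Y}}[-j]$ coincides on the nose with left contraction, which is the content of the proposition.
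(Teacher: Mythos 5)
Your proposal reproduces the paper's own argument: the same commutative diagram reducing the action (\ref{KashIsoEqDeux}) to the chain-level pairing $\wh{*}$ of (\ref{EqOnzeBis}) (via the isomorphism $j^{*}(\,\cdot\,)\lltens{}_{\oox} j^{!}(\,\cdot\,)\simeq j^{!}(\,\cdot\,\lltens{}_{\ooy}\,\cdot\,)$ from \S~\ref{six}), followed by identifying the top row componentwise through the HKR splitting and the left duality map of (\ref{right}), where $\wh{*}\bigl((0,\ubj_{1}),(\ubi_{2},0)\bigr)=((-1)^{l}\,\ubj_{1}\we\ubi_{2},0)$ turns the wedge pairing into left contraction. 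The explicit formula and the residual sign bookkeeping you defer are exactly the ``direct computation'' the paper itself leaves to the reader, so the proposal is correct and essentially identical in approach.
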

\begin{proof}
Let us consider the following commutative diagram, in which we use implicitly the isomorphism $j\ee\be\pp_{\sigma }\he\flexp{\sim}H^{0}\be(j\ee\be)(\pp_{\sigma }\he)$:
\[
\xymatrix@C=35pt{
\st j\ee\be\pp_{\sigma }\he \,\oti_{\oox}\he\,\textrm{H}^{0}\be(j\pe\be)\,(\,\qq_{\sigma }\he\, \oti_{\oox} \he  \omega_{X/Y} \he)\ar[r]\ar[d]^-{\sim}
&\st\textrm{H}^{0}\be(j\pe\be)\,(\pp_{\sigma }\he\,\oti_{\ooy}\he [\,\qq_{\sigma }\he\, \oti_{\oox} \he  \omega_{X/Y} \he])\ar[r]^-{\id \oti \,\wh{*}\,}\ar[d]
&\st\textrm{H}^{0}\be(j\pe\be)\,(\,\qq_{\sigma }\he\, \oti_{\oox} \he  \omega_{X/Y} \he)\ar[d]^-{\sim}\\
\st j\ee\be\pp_{\sigma }\he\,\lltens{}_{\oox}\,j\pe\be(\,\qq_{\sigma }\he\, \oti_{\oox} \he  \omega_{X/Y} \he) \ar[r]^-{\sim}\ar[d]^-{\sim}
&\st j\pe\be(\pp_{\sigma }\he\,\lltens{}_{\ooy}\he \,[\,\qq_{\sigma }\he\, \oti_{\oox} \he  \omega_{X/Y} \he])\ar[r]\ar[d]^-{\sim}
&\st j\pe\be(\,\qq_{\sigma }\he\, \oti_{\oox} \he  \omega_{X/Y} \he)\\
\st j\ee\be j\ei\he\oox\,\lltens{}_{\oox}\he\, j\pe\be j\ei\he\oox\ar[r]^-{\sim}
&\st j\pe\be(\,j\ei\he \oox \,\lltens{}_{\ooy}\he\, j\ei\he\oox)\ar[r]
&\st j\pe\be j\ei\he\oox\ar[u]_-{\sim}
}
\]
\par \smallskip
where $\wh{*}$ is defined by $(\ref{EqOnzeBis})$. Now we have isomorphisms \[
j\ee\be\pp_{\sigma }\he \simeq \bop_{i=0}^{r}\bw{i}N\ee_{X/Y}[i] \qquad \textrm{and} \qquad \textrm{H}^{0}\be(j\pe\be)\,(\,\qq_{\sigma }\he\,\oti_{\oox}\he\,\omega _{X/Y}\he) \simeq \bop_{i=0}^{r}\bw{i}N_{X/Y}[-i],\]
the second one being given by (\ref{right}). Thanks to (\ref{EqOnzeBis}), a direct computation shows that the composition of the arrows in the first horizontal row of the diagram is via the above isomorphisms the left contraction morphism. This yields the result.
\end{proof}
\begin{definition}\label{KashIsoDefUn}
 For any pair $(X, Y)$ of complex manifolds such that $X$ is a closed complex submanifold of $Y$\!, the \emph{Kashiwara isomorphism} $\Dg$ is a specific isomorphism in the derived category $D^{\textrm{b}}\be(\oox)$ between  $j\ee\be j\ei\he\oox \, \lltens{}_{\oox}\he  \omega _{X/Y}\he$ and $j\pe\be j\ei\he\oox$ given by the chain of morphisms
\[
\xymatrix{{j\ee\be j\ei\he\oox \, \lltens{}_{\oox}\he  \omega _{X/Y}\he \simeq j\ee\be j\ei\he\oox \, \lltens{}_{\oox}\he j\pe\be\ooy \, \ar[r]^-{\sim}}&{\, j\pe\be (j\ei\he\oox \, \lltens{}_{\ooy} \he \ooy })\, \,  \simeq\,\,  j\pe\be j\ei\he\oox .}
\]
\end{definition}
\begin{proposition}\label{KashIsoPropDeux}
 The isomorphism $\Dg$ is an isomorphism of left $\hh\hh_{Y}\he(X)$-modules.
\end{proposition}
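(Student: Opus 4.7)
The plan is to unwind the definition of the Kashiwara isomorphism and reduce the compatibility to a single coherence property of the projection formula. Recall from the end of §\ref{six} that one has a canonical isomorphism of bifunctors
\[
\phi_{\ff,\g}\colon j\ee\be\ff \lltens{}_{\oox}\he j\pe\be\g \flexp{\sim} j\pe\be(\ff \lltens{}_{\ooy}\he \g),
\]
which comes from the natural map $\ff \lltensl{}_{\ooy}\rhr{\ooy}(\oox,\g) \to \rhr{\ooy}(\oox,\ff \lltens{}_{\ooy}\he \g)$. By Definition \ref{KashIsoDefUn}, $\Dg$ is precisely $\phi_{j\ei\he\oox,\,\ooy}$ composed with the canonical isomorphisms $j\pe\be\ooy \simeq \omega_{X/Y}$ and $j\ei\he\oox \lltens{}_{\ooy}\he \ooy \simeq j\ei\he\oox$. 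Moreover, the action of $\hh\hh_{Y}\he(X)$ on $j\pe\be j\ei\he\oox$ defined by the chain \eqref{KashIsoEqDeux} is precisely $\phi_{j\ei\he\oox,\,j\ei\he\oox}$ composed with the multiplication $m\colon j\ei\he\oox \lltens{}_{\ooy}\he j\ei\he\oox \to j\ei\he\oox$.

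First I would write out the compatibility as the commutativity of the following rectangle in $D^{\textrm{b}}\be(\oox)$:
\[
\xymatrix@C=20pt{
j\ee\be j\ei\he\oox \lltens{}_{\oox}\he (j\ee\be j\ei\he\oox \lltens{}_{\oox}\he \omega_{X/Y}) \ar[r]\ar[d]_-{\id\,\lltens\,\Dg}& j\ee\be j\ei\he\oox \lltens{}_{\oox}\he \omega_{X/Y}\ar[d]^-{\Dg}\\
j\ee\be j\ei\he\oox \lltens{}_{\oox}\he j\pe\be j\ei\he\oox \ar[r]& j\pe\be j\ei\he\oox
}
\]
where the top row uses the ring structure on $\hh\hh_{Y}\he(X)$ from Proposition \ref{KashIsoPropUn}, which is itself built from $\phi_{j\ei\he\oox,\,j\ei\he\oox}$ and $m$ (via the isomorphism $j\ee\be j\ei\he\oox \lltens{}_{\oox}\he j\ee\be j\ei\he\oox \simeq j\ee\be(j\ei\he\oox \lltens{}_{\ooy}\he j\ei\he\oox)$, which is nothing but $\phi_{j\ei\he\oox,\,\ooy}$ after tensoring in a trivial factor). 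After inserting the definitions of $\Dg$ and of both actions, the rectangle can be refined into a diagram whose cells either express the naturality of $\phi$ in its first argument, applied to $m$, or encode the ``associativity'' of $\phi$ in three variables.

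The central step, and the main technical point, is therefore to check this associativity: that the two natural maps
\[
(j\ee\be\ff \lltens{}_{\oox}\he j\ee\be\ff') \lltens{}_{\oox}\he j\pe\be\g \flexp{\sim} j\pe\be\bigl( (\ff \lltens{}_{\ooy}\he \ff') \lltens{}_{\ooy}\he \g \bigr)
\]
obtained by applying $\phi$ twice agree. This would be verified by choosing K-flat resolutions of $\ff,\ff',\g$ over $\ooy$ (so that all derived tensor products become honest ones), replacing $j\pe\be$ with $\rhr{\ooy}(\oox, \cdot)$ computed on an injective resolution, and checking that both composites are induced by the same evaluation-type map at the level of complexes. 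Naturality of $\phi$ in the first argument applied to $m$ then gives the compatibility of the right actions, while the associativity gives the compatibility of the left-factor contraction with the module structure. The naturality of $\phi$ in its second argument applied to the trivial map $\ooy \lltens{}_{\ooy}\he \ooy \simeq \ooy$ handles the remaining identifications.

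The only real obstacle is bookkeeping: the coherence diagram has several faces, and one must keep track of which factor of $j\ei\he\oox$ plays the role of the ``module'' and which plays the role of the ``algebra.'' This is made easier by the symmetric role the two factors play in $m$, so no sign surprises appear. Once the associativity of $\phi$ is established, the proposition follows formally, and the statement for $\Gamma_{\sigma}$, $\wh{\Gamma}_{\sigma}$ then reduces to Proposition \ref{KashIsoThUn}.
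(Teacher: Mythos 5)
Your proposal is essentially the paper's proof: the paper establishes the module compatibility by a single two-square commutative diagram whose left square is exactly the three-variable coherence you isolate (compatibility of the monoidal isomorphism for $j\ee\be$ with the morphism $j\ee\be(\,\cdot\,)\lltens{}_{\oox}\he j\pe\be(\,\cdot\,)\to j\pe\be(\,\cdot\,\lltens{}_{\ooy}\he\,\cdot\,)$) and whose right square is the naturality of that morphism in its first argument applied to the multiplication $j\ei\he\oox\lltens{}_{\ooy}\he j\ei\he\oox\to j\ei\he\oox$. One harmless slip: the isomorphism $j\ee\be j\ei\he\oox\lltens{}_{\oox}\he j\ee\be j\ei\he\oox\simeq j\ee\be(j\ei\he\oox\lltens{}_{\ooy}\he j\ei\he\oox)$ entering the ring structure is the monoidality of $j\ee\be$, not $\phi_{j\ei\he\oox,\,\ooy}$ tensored with a trivial factor, but since your coherence square uses the correct map the argument is unaffected.
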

\begin{proof}
This follows directly from the commutative diagram
\[
\xymatrix@C=30pt{
\st j\ee\be j\ei\he\oox \, \lltens{}_{\oox}\he\,j\ee\be j\ei\he\oox \, \lltens{}_{\oox}\he j\pe\be\ooy \ar[r]^-{\sim}\ar[d]^-{\sim}_-{\id\, \lltens{}_{\oox}\he \Dg}&\st j\ee\be (j\ei\he\oox\,\lltens{}_{\ooy}\he\, j\ei\oox)\,\lltens{}_{\oox}\he\, j\pe \ooy \ar[r]\ar[d]^-{\sim}&\st j\ee\be j\ei\he\oox \, \lltens{}_{\oox}\he\,j\pe\ooy\,\ar[d]^-{\sim}_-{\Dg}\, \\ \st
j\ee\be j\ei\he\oox\,\lltens{}_{\oox}\he j\pe\be j\ei\he\oox \ar[r]^-{\sim}&\st j\pe\be( j\ei\he\oox\,\lltens{}_{\ooy}\he\, j\ei\he\oox)\ar[r]&\st j\pe\be j\ei\he\oox
}
\]
\end{proof}
We fix an isomorphism between $j\ee\be j\ei\he\oox\,\lltens{}_{\oox}\he\omega _{X/Y}\he$ and $\bop_{i=0}^{r}\bw{i}{N_{X/Y}\he[-i]}$ as follows:
\begin{equation}\label{KashIsoEqTrois}
 \xymatrix@C=50pt{
j\ee\be j\ei\he\oox\,\lltens{}_{\oox}\he\omega _{X/Y}\he\ar[r]^-{\sim}_-{\Gamma _{\sigma }\he\,\lltens{}_{\oox}\he\id}&\bop_{i=0}^{r}\bw{i}{N\ee_{X/Y}[i]}\oti_{\oox}\he\omega _{X/Y}\he\ar[r]^-{\sim}_-{D^{\ell}\be}&\bop_{i=0}^{r}\bw{i}{N_{X/Y}\he[-i].}
}
\end{equation}
Then the main result of this section is:
\begin{theorem}\label{KashIsoThDeux}
 Let $(X,\sigma )$ be a quantized analytic cycle in a complex manifold $Y$\!, and let $M$ be the automorphism of $\bop_{i=0}^{r}\bw{i}{N_{X/Y}\he[-i]}$ occurring in the diagram
\[
\xymatrix@C=40pt@R=20pt{
j\ee\be j\ei\he\oox\,\lltens{}_{\oox}\he\omega _{X/Y}\he\ar[r]^-{\sim}_-{\mathfrak{D}}\ar[d]_-{\sim}&j\pe\be j\ei\he\oox\ar[d]^-{\sim}\\
\bop_{i=0}^{r}\bw{i}{N_{X/Y}\he[-i]}\ar[r]^-{\sim}_-{M}&\bop_{i=0}^{r}\bw{i}{N_{X/Y}\he[-i]}
}
\]
where the left vertical isomorphism is defined by \emph{(\ref{KashIsoEqTrois})}. Then for any integers $i$, $j$ such that $0\le i\le j\le r$, the component $M_{i,\,j}\he$ of $M$ is given by
\[
\xymatrix@C=30pt{
\bw{j}{N_{X/Y}\he[-j]}\ar[rr]^-{\smash[t]{q_{\sigma} \he (X)_{j-i}\he\,\lltens{}_{\oox}\he\id} }&&\bw{j-i}{N\ee_{X/Y}[j-i]}\,\lltens{}_{\oox}\he\bw{j}{N_{X/Y}\he[-j]}\ar[r]^-{\lrcorner}&\bw{i}{N_{X/Y}\he[-i].}
}
\]
In particular, $\mathfrak{D}$ is completely determined by the quantized cycle class $q_{\sigma} \he (X)$.
\end{theorem}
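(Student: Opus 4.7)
The plan is to exploit the module-theoretic rigidity of $\mathfrak{D}$. By Proposition~\ref{KashIsoPropDeux}, $\mathfrak{D}$ is a morphism of left $\hh\hh_{Y}\he(X)$-modules, so once I check that the two identifications transport the $\hh\hh_{Y}\he(X)$-action on either side into \emph{left contraction} on $\bop_{i=0}^{r}\bw{i}{N_{X/Y}\he[-i]}$, the induced automorphism $M$ will be $\hh\hh_{Y}\he(X)$-equivariant and therefore determined by what it does on a single generating subobject.

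For the target this is exactly Proposition~\ref{KashIsoThUn}. For the source, the $\hh\hh_{Y}\he(X)$-action on $j\ee\be j\ei\he\oox\,\lltens{}_{\oox}\he\omega_{X/Y}\he$ is by multiplication on the left factor; under $\Gamma_{\sigma}\he \lltens{}_{\oox}\he \id$ this becomes wedge product on the first tensor factor, and $D^{\ell}\be$ then turns wedge product into left contraction on $\bop_{i=0}^{r}\bw{i}{N_{X/Y}\he[-i]}$, because $D^{\ell}\be$ is by construction a morphism of left $\bw{}{N\ee_{X/Y}}$-modules (\S\ref{cup}).

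The next step is to compute $\mathfrak{D}$ on the unit copy $\omega_{X/Y}\he \hookrightarrow j\ee\be j\ei\he\oox\,\lltens{}_{\oox}\he\omega_{X/Y}\he$ induced by $\oox \to j\ee\be j\ei\he\oox$. Tracing through Definition~\ref{KashIsoDefUn}, this restriction coincides with the canonical morphism $\omega_{X/Y}\he \simeq j\pe\be\ooy \to j\pe\be j\ei\he\oox$ coming from the unit $\ooy \to j\ei\he\oox$. Post-composing with $\wh{\Gamma}_{\sigma}\he$ and decomposing into components, one recognises by Definition~\ref{BaProCyClDefUn} and~\eqref{BaProCyClEqDeux} precisely the morphism whose $i$-th component is $D^{\ell}\be \circ (q_{\sigma}\he(X)_{r-i}\oti \id)$, i.e.\ left contraction by $q_{\sigma}\he(X)_{r-i}$ against the top generator. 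Since via~\eqref{KashIsoEqTrois} the unit copy corresponds exactly to the top summand $\bw{r}{N_{X/Y}\he[-r]}$ of the source, this pins down $M_{i,\,r}$ as left contraction by $q_{\sigma}\he(X)_{r-i}$.

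The formula for general $j \le r$ then follows by $\hh\hh_{Y}\he(X)$-equivariance: writing an arbitrary class in $\bw{j}{N_{X/Y}\he[-j]}$ as $u \lrcorner \xi_{0}$ for $u$ in $\bw{r-j}{N\ee_{X/Y}}[r-j]$ and $\xi_{0}$ the top generator, the identity $M(u \lrcorner \xi_{0}) = u \lrcorner M(\xi_{0})$ combined with the associativity of contraction yields the stated expression $M_{i,\,j} = q_{\sigma}\he(X)_{j-i} \lrcorner \,\cdot\,$. The main difficulty I anticipate lies in bookkeeping of signs from the conventions of \S\ref{cup}, in particular ensuring that the choice of $D^{\ell}\be$ (rather than $D^{r}\be$) together with the sign $(-1)^{r(r+1)/2}$ in~\eqref{BaProCyClEqUn} and the Koszul signs that appear when commuting shifted morphisms in the derived category conspire so that no extraneous sign appears in the final formula; a secondary subtlety is justifying rigorously the ``argument by a generator'' in $D^{\textrm{b}}\be(\oox)$, which should be legitimate precisely because $M$ is a morphism of $\hh\hh_{Y}\he(X)$-modules and the top summand $\bw{r}{N_{X/Y}\he[-r]}$ generates the target $\bop_{i=0}^{r}\bw{i}{N_{X/Y}\he[-i]}$ under the left contraction action of $\bop_{i=0}^{r}\bw{i}{N\ee_{X/Y}[i]}$.
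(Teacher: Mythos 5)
Your proposal is correct and follows essentially the same route as the paper: both proofs use Propositions \ref{KashIsoPropUn}, \ref{KashIsoThUn} and \ref{KashIsoPropDeux} to see that, after transporting both module structures to left contraction on $\bop_{i=0}^{r}\bw{i}{N_{X/Y}\he[-i]}$, the equivariant map $M$ is determined by the restriction of $\mathfrak{D}$ to the generator $\omega_{X/Y}\he$, which by Definitions \ref{KashIsoDefUn} and \ref{BaProCyClDefUn} is exactly the morphism $\Delta$ defining $q_{\sigma}\he(X)$, and then propagate to all components via the action of the summands $\bw{r-j}{N\ee_{X/Y}}[r-j]$ and the left-module property of $D^{\ell}\be$. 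The paper compresses this into the single composition $D^{\ell,-1}$, then $\id\lltens{}\Delta_{r-j+i}$, then $\lrcorner$, and rewrites it as contraction by $q_{\sigma}\he(X)_{j-i}$, which is precisely your argument spelled out.
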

\begin{proof}
 Let $\apl{\Delta }{\omega _{X/Y}\he}{\bop_{i=0}^{k}\bw{i}{N_{X/Y}\he[-i]}}$ be the morphism in $D^{\textrm{b}}\be(\oox)$ defining the quantized cycle class. For any integers $i$ and $j$ such that $0\le i\le j\le r$, Propositions \ref{KashIsoPropUn}, \ref{KashIsoThUn} and \ref{KashIsoPropDeux} imply that $M_{i,\,j}\he$ is given by the composition
\[
\xymatrix@C=35pt{
\bw{j}{N_{X/Y}\he[-j]}&\bw{r-j}{N\ee_{X/Y}[r-j]}\,\lltens{}_{\oox}\he\omega _{X/Y}\he\ar[l]_-{\sim}^-{D^{\ell}\be}\ar[rr]^-{\id\,\lltens{}_{\oox}\he\Delta _{r-j+i}\he}&&\hspace*{150pt}
}
\]
\vspace*{-1ex}
\[
\xymatrix@C=35pt{
\hspace*{100pt}\bw{r-j}{N\ee_{X/Y}[r-j]\,\lltens{}_{\oox}\he}\bw{r-j+i}{N_{X/Y}\he[j-i-r]}\ar[r]^-{\lrcorner}&\bw{i}{N_{X/Y}\he[-i]}
}
\]
which is exactly
\[
\xymatrix@C=30pt{
\bw{j}{N_{X/Y}\he[-j]}&\bw{r-j}{N\ee_{X/Y}[r-j]}\,\lltens{}_{\oox}\he\omega _{X/Y}\he\ar[l]_-{\sim}^-{D^{\ell}\be}\ar[rrr]^-{(\id\we{\, q_{\sigma} \he (X)_{j-i}\he})\,\lltens{}_{\oox}\he\id}&&&\hspace*{100pt}
}
\]
\vspace*{-1ex}
\[
\xymatrix@C=35pt{
\hspace*{150pt}\bw{r-i}{N\ee_{X/Y}[r-i]\,\lltens{}_{\oox}\he}\omega _{X/Y}\he\ar[r]^-{\sim}_-{D^{\ell}\be}&\bw{i}{N_{X/Y}\he[-i].}
}
\]
This yields the result.
\end{proof}

\bibliographystyle{plain}
\bibliography{biblio}

\begin{thebibliography}{10}

\bibitem{AC}
D.~{Arinkin} and A.~{Caldararu}.
\newblock {When is the self-intersection of a subvariety a fibration?}
\newblock Preprint, arXiv 1007.1671, 2010.

\bibitem{BF}
R.-O. Buchweitz and H.~Flenner.
\newblock {The global decomposition theorem for {H}ochschild (co)homology of
  singular spaces via the {A}tiyah-{C}hern character}.
\newblock {\em Adv. Math.}, 217(1):243--281, 2008.

\bibitem{CCT1}
D.~Calaque, A.~C{\u{a}}ld{\u{a}}raru, and J.~Tu.
\newblock {PBW for an inclusion of lie algebras}.
\newblock Preprint, arXiv 1010.0985, 2010.

\bibitem{CV}
D.~Calaque and M.~Van~den Bergh.
\newblock {Hochschild cohomology and {A}tiyah classes}.
\newblock {\em Adv. Math.}, 224(5):1839--1889, 2010.

\bibitem{C1}
A.~C{\u{a}}ld{\u{a}}raru.
\newblock {The {M}ukai pairing. {II}. {T}he {H}ochschild-{K}ostant-{R}osenberg
  isomorphism}.
\newblock {\em Adv. Math.}, 194(1):34--66, 2005.

\bibitem{D}
A.~Dimca.
\newblock {\em {Sheaves in topology}}.
\newblock Universitext. Springer-Verlag, Berlin, 2004.

\bibitem{G}
J.~{Grivaux}.
\newblock {On a conjecture of Kashiwara relating Chern and Euler classes of
  O-modules}.
\newblock Preprint, arXiv 1007.1671, 2010.

\bibitem{HKR}
G.~Hochschild, B.~Kostant, and A.~Rosenberg.
\newblock {Differential forms on regular affine algebras}.
\newblock {\em Trans. Amer. Math. Soc.}, 102:383--408, 1962.

\bibitem{KA}
M.~{Kashiwara}.
\newblock {Unpublished letter to Pierre Schapira}, 1991.

\bibitem{KS1}
M.~Kashiwara and P.~Schapira.
\newblock {\em {Deformation quantization modules}}.
\newblock Ast\'erisque. Soci\'et\'e math\'ematique de France, 2012.

\bibitem{KO}
M.~Kontsevich.
\newblock {Deformation quantization of {P}oisson manifolds}.
\newblock {\em Lett. Math. Phys.}, 66(3):157--216, 2003.

\bibitem{MA}
N.~Markarian.
\newblock {The {A}tiyah class, {H}ochschild cohomology and the {R}iemann-{R}och
  theorem}.
\newblock {\em J. Lond. Math. Soc. (2)}, 79(1):129--143, 2009.

\bibitem{RA1}
A.-C. Ramadoss.
\newblock {The relative {R}iemann-{R}och theorem from {H}ochschild homology}.
\newblock {\em New York J. Math.}, 14:643--717, 2008.

\bibitem{RA2}
A.-C. Ramadoss.
\newblock {A variant of the Mukai pairing via deformation quantization}.
\newblock Preprint, 2011.

\bibitem{SH}
F.~Schuhmacher.
\newblock {Hochschild cohomology of complex spaces and {N}oetherian schemes}.
\newblock {\em Homology Homotopy Appl.}, 6(1):299--340, 2004.

\bibitem{SW}
R.-G. Swan.
\newblock {Hochschild cohomology of quasiprojective schemes}.
\newblock {\em J. Pure Appl. Algebra}, 110(1):57--80, 1996.

\bibitem{YE}
A.~Yekutieli.
\newblock {The continuous {H}ochschild cochain complex of a scheme}.
\newblock {\em Canad. J. Math.}, 54(6):1319--1337, 2002.

\end{thebibliography}

\end{document}